\newtheorem{df}{Definition}
\newtheorem{thm}{Theorem}
\newtheorem{lem}{Lemma}
\newtheorem{rem}{Remark}
\newtheorem{con}{Conjecture}
\newtheorem{ex}{Example}
\newcommand{\relmiddle}[1]{\mathrel{}\middle#1\mathrel{}}
\numberwithin{equation}{section}
\numberwithin{figure}{section}
\numberwithin{df}{section}
\numberwithin{prp}{section}
\numberwithin{thm}{section}
\numberwithin{lem}{section}
\numberwithin{rem}{section}
\numberwithin{con}{section}
\numberwithin{ex}{section}
\begin{document}

\title[fixed-point-free involution words]
{Queer Supercrystal Structure for Increasing Factorizations of Fixed-Point-Free Involution Words}
\author{Toya Hiroshima}
\address{OCAMI, Osaka City University,
3-3-138 Sugimoto, Sumiyoshi-ku, Osaka 558-8585, Japan}
\email{toya-hiroshima@outlook.jp}
\date{}

\begin{abstract}
We show that the set of increasing factorizations of fixed-point-free (FPF) involution words has the structure of queer supercrystals.
By exploiting the algorithm of symplectic shifted Hecke insertion recently introduced by Marberg~\cite{Mar}, we establish the one-to-one correspondence between the set of increasing factorizations of fixed-point-free involution words and the set of primed tableau 
(semistandard marked shifted tableaux) and the latter admits the structure of queer supercrystals.
In order to establish the correspondence, we prove that the Coxeter-Knuth related FPF-involution words have the same insertion tableau in the symplectic shifted Hecke insertion, where the insertion tableau is an increasing shifted tableau and the recording tableau is a primed tableau.
\end{abstract}

\subjclass[2010]{Primary~05E10; Secondary~20G42}
\keywords{queer Lie superalgebras, queer supercrystal, fixed-point-free involution words, symplectic shifted Hecke insertion}

\maketitle

\section{Introduction}

Recently, Marberg~\cite{Mar} has introduced the symplectic shifted Hecke insertion, which is the symplectic shifted analogue of Hecke insertion introduced by Buch, Kresch, Shimozono, Tamvakis, and Yong~\cite{BKSTY} in their study of Grothendieck polynomials.
By his insertion algorithm, Marberg established a bijection between increasing factorizations of symplectic Hecke words and the pair of shifted Young tableaux of the same shape such that the left is an increasing shifted tableau and the right is a set-valued shifted tableau, which are called insertion and recording tableaux, respectively.
Restricted to reduced symplectic Hecke words, which are called fixed-point-free (FPF) involution words, set-valued shifted tableaux turn out to be primed tableaux (semistandard marked shifted tableaux).
On the other hand, Assaf and Oguz~\cite{AO1,AO2} and the author~\cite{Hir} showed independently that the set of primed tableaux admits the structure of crystals for queer Lie superalgebra or simply queer supercrystals discovered by Grantcharov et al.~\cite{GJKKK1,GJKKK2,GJKKK3,GJKK}.
Combining these results, it is expected that the set of increasing factorizations of FPF-involution words is a queer supercrystal and the affirmative answer is given in this paper.
To do so, we prove that Coxeter-Knuth related (Coxeter braid related in the terminology of \cite{Mar}) FPF-involution words have the same insertion tableau, which ensures the one-to-one correspondence between the set of increasing factorizations of FPF-involution words and the set of primed tableaux. 
 
The paper is organized as follows.
In Section~\ref{sec:queer}, we review crystals for the queer Lie superalgebra.
Definitions of increasing shifted tableaux and primed tableaux are given in Section~\ref{sec:tableaux}.
In Section~\ref{sec:insertion}, we explain the algorithm of FPF-involution Coxeter-Knuth insertion (symplectic shifted Hecke insertion restricted to FPF-involution words) introduced in \cite{Mar} and show that Coxeter-Knuth related FPF-involution words have the same insertion tableau (Theorem~\ref{thm:CK}).
The proof is so lengthy that it is relegated to Appendix A.
In Section~\ref{sec:factorization}, we show that the set of increasing factorizations of FPF-involution words admits the structure of queer supercrystals (Theorem~\ref{thm:q-FPF}).
The odd Kashiwara operators are given in Lemmas~\ref{lem:f0F1} and \ref{lem:e0F1}.
We briefly mention the results of the ``orthogonal'' version without proofs in Appendix B.

\section{Crystals for the queer Lie superalgebra} \label{sec:queer}

Let us start by briefly sketching crystals for the general linear Lie algebra $\mathfrak{gl}(n)$ or simply $\mathfrak{gl}(n)$-crystals~\cite{BS,HK,Kas2}.
Let 
$P=
\bigoplus _{i=1}^{n}
\mathbb{Z\epsilon}_{i}$ 
be the weight lattice and 
$P^{\vee}=
\bigoplus _{i=1}^{n}
\mathbb{Z}k_{i}$ 
be the dual weight lattice with 
$\left\langle \epsilon_{i},k_{j}\right\rangle =\delta_{ij}$ for ($i,j=1,\ldots,n$).
Let 
$\left\{  \alpha_{i}=\epsilon_{i}-\epsilon_{i+1} \mid 1\leq i <n \right\}  $ be the set of simple roots and 
$\left\{  h_{i}=k_{i}-k_{i+1} \mid 1\leq i <n \right\}  $ be the set of simple coroots.
Let
\[
P^{+}=\left\{  \lambda \mid \lambda\in P,\left\langle \lambda,h_{i}\right\rangle \geq0 \; (i=1,\ldots,n-1) \right\}
\] 
be the set of dominant integral weights.

\begin{df} \label{df:crystal}
A set $B$ together with maps
$\mathrm{wt}:B\rightarrow P$ and 
$\Tilde{e_{i}},\Tilde{f_{i}}:B\rightarrow B\sqcup\{\boldsymbol{0}\}$
is called a $\mathfrak{gl}(n)$-crystal if the following properties are satisfied for $i=1,\ldots,n-1$:
when we define 
\[
\varepsilon_{i}(b)=\max\left\{  k\geq0 \relmiddle| \Tilde{e_{i}}^{k}b\in B\right\},
\]
and
\[
\varphi_{i}(b)=\max\left\{  k\geq0 \relmiddle| \Tilde{f_{i}}^{k}b\in B\right\},
\]
for $b\in B$, then
\begin{itemize}
\item[(1)]
$\varepsilon_{i},\varphi_{i}:B\rightarrow\mathbb{Z}_{\geq0}$ 
and 
$\varphi_{i}(b)=\varepsilon_{i}(b)+
\left\langle \mathrm{wt}(b), h_{i} \right\rangle $,
\item[(2)]
if $\Tilde{e_{i}}b\neq \boldsymbol{0}$, then 
$\mathrm{wt}(\Tilde{e_{i}}b)=\mathrm{wt}(b)+\alpha_{i}$, 
$\varepsilon_{i}(\Tilde{e_{i}}b)=\varepsilon_{i}(b)-1$, and 
$\varphi_{i}(\Tilde{e_{i}}b)=\varphi_{i}(b)+1$,
\item[(3)]
if $\Tilde{f_{i}}b\neq \boldsymbol{0}$, then 
$\mathrm{wt}(\Tilde{f_{i}}b)=\mathrm{wt}(b)-\alpha_{i}$, 
$\varepsilon_{i}(\Tilde{f_{i}}b)=\varepsilon_{i}(b)+1$, and 
$\varphi_{i}(\Tilde{f_{i}}b)=\varphi_{i}(b)-1$,
\item[(4)]
for $b,b^{\prime}\in B$, 
$\Tilde{f_{i}}b=b^{\prime}\Longleftrightarrow\Tilde{e_{i}}b^{\prime}=b$.
\end{itemize}
\end{df}
The maps $\Tilde{e_{i}}$ and $\Tilde{f_{i}}$ are called Kashiwara operators 
and $\mathrm{wt}(b)$ is called the weight of $b$.
In Definition~\ref{df:crystal}, $\boldsymbol{0}$ is a formal symbol; $b=\boldsymbol{0}$ implies $b\notin B$.

\begin{rem}
More precisely, crystals in Definition~\ref{df:crystal} are called semiregular~\cite{HK} or seminormal~\cite{BS}.
Throughout this paper, we consider only semiregular or seminormal crystals.
\end{rem}

\begin{df}[tensor product rule] \label{df:tensor}
Let $B_{1}$ and $B_{2}$ be $\mathfrak{gl}(n)$-crystals.
The tensor product $B_{1}\otimes B_{2}$ is defined to be 
the set 
$B_{1}\times B_{2} = 
\left\{  b_{1}\otimes b_{2} \mid b_{1}\in B_{1},b_{2}\in B_{2}\right\}  $
whose crystal structure is defined by
\begin{itemize}
\item[(1)]
$\mathrm{wt}(b_{1}\otimes b_{2})=\mathrm{wt}(b_{1})+\mathrm{wt}(b_{2})$,
\item[(2)]
$\varepsilon_{i}(b_{1}\otimes b_{2})=
\max\left\{  \varepsilon_{i}(b_{2}),\varepsilon _{i}(b_{1})-\left\langle \mathrm{wt}(b_{2}),h_{i} \right\rangle \right\}$,
\item[(3)]
$\varphi_{i}(b_{1}\otimes b_{2})=
\max\left\{ \varphi_{i}(b_{1}), \varphi_{i}(b_{2})+\left\langle \mathrm{wt}(b_{1}), h_{i} \right\rangle \right\}$,
\item[(4)]
$\Tilde{e_{i}}(b_{1}\otimes b_{2})=
\begin{cases}
\Tilde{e_{i}}b_{1}\otimes b_{2} &  if \varphi_{i}(b_{2})<\varepsilon_{i}(b_{1}), \\
b_{1}\otimes\Tilde{e_{i}}b_{2} &  if \varphi_{i}(b_{2})\geq\varepsilon_{i}(b_{1}),
\end{cases}
$
\item[(5)]
$\Tilde{f_{i}}(b_{1}\otimes b_{2})=
\begin{cases}
\Tilde{f_{i}}b_{1}\otimes b_{2} & if \varphi_{i}(b_{2})\leq\varepsilon_{i}(b_{1}), \\
b_{1}\otimes\Tilde{f_{i}}b_{2} & if \varphi_{i}(b_{2})>\varepsilon_{i}(b_{1}),
\end{cases}
$
\end{itemize}
for $i=1,\ldots,n-1$.
\end{df}

\begin{rem}
Note that this definition is different from the one by the convention of Kashiwara.
Since we use Kashiwara operators on primed tableaux which are constructed by the anti-Kashiwara convention for the tensor product rule~\cite{HPS}, we adopt Definition~\ref{df:tensor}.
\end{rem}

Next, let us describe crystals for the queer Lie superalgebra $\mathfrak{q}(n)$ or simply $\mathfrak{q}(n)$-crystals introduced in \cite{GJKKK2,GJKKK3}.

\begin{df} \label{df:queer}
A $\mathfrak{q}(n)$-crystal is a set $B$ together with the maps $\mathrm{wt}:B\rightarrow P$, 
$\varepsilon_{i}, \varphi_{i}:B\rightarrow\mathbb{Z}_{\geq 0}$ and 
$\Tilde{e}_{i},\Tilde{f}_{i}:B\rightarrow B\sqcup\{\boldsymbol{0}\}$ for 
$i\in I:=\{1,\ldots,n-1,\Bar{1}\}$ 
satisfying the following conditions:

\begin{itemize}
\item[(1)]
$B$ is a $\mathfrak{gl}(n)$-crystal with respect to $\mathrm{wt}$, $\varepsilon_{i}$, $\varphi_{i}$, $\Tilde{e}_{i}$ 
and $\Tilde{f}_{i}$ for $i=1,\ldots,n-1$,
\item[(2)]
$\mathrm{wt}(b)\in
\bigoplus _{i=1}^{n}
\mathbb{Z}_{\geq 0}\epsilon_{i}$ 
for $b\in B,$
\item[(3)] $\mathrm{wt}(\Tilde{e}_{\Bar{1}}b)=\mathrm{wt}(b)+\alpha_{1}$, 
$\mathrm{wt}(\Tilde{f}_{\Bar{1}}b)=\mathrm{wt}(b)-\alpha_{1}$ 
for $b\in B$,
\item[(4)]
$\Tilde{f}_{\Bar{1}}b=b^{\prime}\Longleftrightarrow\Tilde{e}_{\Bar{1}}b^{\prime}=b$ for all $b,b^{\prime}\in B$,
\item[(5)] for $3\leq i\leq n-1$, we have 
\begin{itemize}
\item[(i)] the operators $\Tilde{e}_{\Bar{1}}$ and $\Tilde{f}_{\Bar{1}}$ commute with $\Tilde{e}_{i}$ and 
$\Tilde{f}_{i}$,
\item[(ii)] if $\Tilde{e}_{\Bar{1}}b\in B$, then 
$\varepsilon_{i}(\Tilde{e}_{\Bar{1}}b)=\varepsilon_{i}(b)$ and 
$\varphi_{i}(\Tilde{e}_{\Bar{1}}b)=\varphi_{i}(b)$.
\end{itemize}
\end{itemize}
\end{df}

The crystal associated with an irreducible highest weight module $V(\lambda)$ in the category $\mathcal{O}_{int}^{\geq 0}$~\cite{GJKKK1,GJKK} of tensor representations over the $\mathfrak{q}(n)$-quantum group $U_{q}(\mathfrak{q}(n))$ is denoted by $B_{n}(\lambda)$.
We identify the partition $\lambda$ with $\lambda=\sum_{i=1}^{n}\lambda_{i}\epsilon_{i}\in P^{+}$ in a usual way~\cite{HK}.
In $B_{n}(\lambda)$, the relevant partition $\lambda$ is a strict partition $\lambda=(\lambda_{1} > \lambda_{2} > \ldots > \lambda_{l} > \lambda_{l+1}= 0)$, where $l\leq n$.
A crystal $B$ can be viewed as an oriented colored graph with colors $i\in I$ 
when we define $b\stackrel{i}{\longrightarrow} b^{\prime}$ if 
$\Tilde{f_{i}}b=b^{\prime}\; (b,b^{\prime}\in B)$.
This graph is called a crystal graph.
The crystal graph of $B_{n}(\square)$, i.e., $B_{n}(\lambda)$ for $\lambda=\epsilon_{1}\in P^{+}$, is given by
\setlength{\unitlength}{12pt}

\begin{center}
\begin{picture}(11,3)

\put(1.1,1.3){\vector(1,0){1.8}}
\put(1.1,1.7){\vector(1,0){1.8}}
\put(4.1,1.5){\vector(1,0){1.8}}
\put(8.1,1.5){\vector(1,0){1.8}}

\put(6,1){\makebox(2,1){$\cdots$}}

\put(0,1){\framebox(1,1){$1$}}
\put(3,1){\framebox(1,1){$2$}}
\put(10,1){\framebox(1,1){$n$}}

\put(1,0){\makebox(2,1){\small $\Bar{1}$}}
\put(1,2){\makebox(2,1){\small $1$}}
\put(4,2){\makebox(2,1){\small $2$}}
\put(8,2){\makebox(2,1){\small $n-1$}}

\end{picture}.
\end{center}

For $\mathfrak{q}(n)$-crystals $B_{1}$ and $B_{2}$, the tensor product $B_{1}\otimes B_{2}$ is a $\mathfrak{gl}(n)$-crystal, where actions of $\Tilde{e}_{\Bar{1}}$ and $\Tilde{f}_{\Bar{1}}$ on $b_{1}\otimes b_{2}$ ($b_{1}\in B_{1}$, $b_{2}\in B_{2}$) are prescribed as
\[
\Tilde{e}_{\Bar{1}}(b_{1}\otimes b_{2})=
\begin{cases}
b_{1} \otimes \Tilde{e}_{\Bar{1}}b_{2}, & 
if \left\langle \mathrm{wt}(b_{1}), k_{1}\right\rangle =\left\langle \mathrm{wt}(b_{1}), k_{2}\right\rangle=0, \\
\Tilde{e}_{\Bar{1}}b_{1} \otimes b_{2}, & otherwise,
\end{cases}
\]
\[
\Tilde{f}_{\Bar{1}}(b_{1}\otimes b_{2})=
\begin{cases}
b_{1} \otimes \Tilde{f}_{\Bar{1}}b_{2}, & 
if \left\langle \mathrm{wt}(b_{1}), k_{1}\right\rangle =\left\langle \mathrm{wt}(b_{1}), k_{2}\right\rangle=0, \\
\Tilde{f}_{\Bar{1}}b_{1} \otimes b_{2}, & otherwise.
\end{cases}
\]
Under these rules,  $B_{1}\otimes B_{2}$ is also a $\mathfrak{q}(n)$-crystal.

\begin{rem}
These rules are different from the ones given in \cite{CK,GJKKK2,GJKKK3} because we adopt the anti-Kashiwara convention for the tensor product rule.
\end{rem}

Let $B$ be a $\mathfrak{q}(n)$-crystal and suppose that $B$ is in a class of \emph{normal} $\mathfrak{gl}(n)$-crystal~\cite{BS}, i.e., every connected component in $B$ is in one-to-one correspondence with a dominant integral weight.
Here, the \emph{connected components} in $B$ are referred to as the maximal subcrystals of $B$ where all the elements are connected by even Kashiwara operators.
We define the automorphism $S_{i}$ on $B$ by
\[
S_{i}(b)=
\begin{cases}
\Tilde{f}_{i}^{\left\langle \mathrm{wt}(b),h_{i}\right\rangle }b, & 
if \left\langle \mathrm{wt}(b),h_{i}\right\rangle \geq0, \\
\Tilde{e}_{i}^{-\left\langle \mathrm{wt}(b),h_{i}\right\rangle }b, & 
if \left\langle \mathrm{wt}(b),h_{i}\right\rangle <0,
\end{cases}
\]
for $b\in B$ and $i=1,2,\ldots,n-1$.
Let $w$ be an element of Weyl group $W$ of $\mathfrak{gl}(n)$ which is generated by simple reflections $s_{i}$ for $i=1,\ldots,n-1$.
Then, there exists a unique action $S_{w}:B\rightarrow B$ of $W$ on $B$ such that $S_{s_{i}}=S_{i}$ 
for $i=1.\ldots,n-1$~\cite{Kas2}.
Let us set $w_{i}=s_{2}\cdots s_{i}s_{1}\cdots s_{i-1}$.
Then $w_{i}$ is the shortest element in $W$ such that $w_{i}(\alpha_{i})=\alpha_{1}$ 
for $i=2,\ldots,n-1$.
We define new operators by
\[
\Tilde{e}_{\Bar{\imath}} =S_{w_{i}^{-1}}\Tilde{e}_{\Bar{1}}S_{w_{i}},\quad  
\Tilde{f}_{\Bar{\imath}} =S_{w_{i}^{-1}}\Tilde{f}_{\Bar{1}}S_{w_{i}},
\]
where $S_{w_{i}}=S_{2}\cdots S_{i}S_{1}\cdots S_{i-1}$ and similar for $S_{w_{i}^{-1}}$.
These operators together with $\Tilde{e}_{\Bar{1}}$ and $\Tilde{f}_{\Bar{1}}$ are called \emph{odd} Kashiwara operators, while $\Tilde{e}_{i}$ and $\Tilde{f}_{i}$ ($i=1,\ldots,n-1$) are called \emph{even} Kashiwara operators.

\begin{thm}[\cite{GJKKK2,GJKKK3}] 
Let $B_{n}(\lambda)$ be a $\mathfrak{q}(n)$-crystal.
There is a unique element $b\in B_{n}(\lambda)$ such that
$\Tilde{e}_{i}b=\Tilde{e}_{\Bar{\imath}}b=\boldsymbol{0}$ for all $i=1,\ldots,n-1$, 
which is called a \emph{$\mathfrak{q}(n)$-highest weight vector} and 
there is a unique element $b\in B_{n}(\lambda)$ called a \emph{$\mathfrak{q}(n)$-lowest weight vector} such that 
$S_{w_{0}}b$ is a $\mathfrak{q}(n)$-highest weight vector, where $w_{0}$ is the longest element of $W$.
\end{thm}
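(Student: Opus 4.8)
The plan is to produce the highest weight vector explicitly, to deduce its uniqueness from the weight structure of $B_{n}(\lambda)$, and then to obtain the lowest weight vector by transporting the highest weight vector through the Weyl group action $S_{w_{0}}$.

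First I would record how the raising operators act on weights. By Definition~\ref{df:crystal}(2) each even operator $\Tilde{e}_{i}$ raises the weight by $\alpha_{i}$, while by Definition~\ref{df:queer}(3) the operator $\Tilde{e}_{\Bar{1}}$ raises it by $\alpha_{1}$. Since $\Tilde{e}_{\Bar{\imath}}=S_{w_{i}^{-1}}\Tilde{e}_{\Bar{1}}S_{w_{i}}$ and $w_{i}(\alpha_{i})=\alpha_{1}$, a one-line computation gives $\mathrm{wt}(\Tilde{e}_{\Bar{\imath}}b)=\mathrm{wt}(b)+w_{i}^{-1}(\alpha_{1})=\mathrm{wt}(b)+\alpha_{i}$ whenever $\Tilde{e}_{\Bar{\imath}}b\neq\boldsymbol{0}$, so every odd raising operator $\Tilde{e}_{\Bar{\imath}}$ also shifts the weight by $\alpha_{i}$. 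Now let $b_{\lambda}$ be the element of weight $\lambda$, which exists and is unique because the extreme weight $\lambda$ occurs with multiplicity one in $V(\lambda)$. As $\lambda$ is maximal in the root order on the weights of $B_{n}(\lambda)$, no element has weight $\lambda+\alpha_{i}$ for any $i$, so both $\Tilde{e}_{i}b_{\lambda}$ and $\Tilde{e}_{\Bar{\imath}}b_{\lambda}$ must vanish for all $i=1,\dots,n-1$. Hence $b_{\lambda}$ is a $\mathfrak{q}(n)$-highest weight vector, settling existence.

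For uniqueness, let $b$ be any $\mathfrak{q}(n)$-highest weight vector and put $\mu:=\mathrm{wt}(b)$. The conditions $\Tilde{e}_{i}b=\boldsymbol{0}$ for $1\le i\le n-1$ say precisely that $b$ is a $\mathfrak{gl}(n)$-highest weight vector, so $\mu$ is dominant and satisfies $\mu\preceq\lambda$. I would then argue $\mu=\lambda$: if $\mu\prec\lambda$, one analyses the decomposition of $B_{n}(\lambda)$ into connected $\mathfrak{gl}(n)$-components together with the way the odd tensor rule links their $\mathfrak{gl}(n)$-highest weight vectors, concluding that at least one operator $\Tilde{e}_{\Bar{\imath}}$ acts nontrivially on $b$, contradicting the highest weight hypothesis. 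Once $\mu=\lambda$ is established, multiplicity one forces $b=b_{\lambda}$. I expect this step---showing that $b_{\lambda}$ is the only $\mathfrak{gl}(n)$-highest weight vector annihilated by every odd raising operator---to be the main obstacle; it is where the explicit structure of $B_{n}(\lambda)$ (for instance its realization inside $B_{n}(\square)^{\otimes|\lambda|}$ and the tensor prescription for $\Tilde{e}_{\Bar{1}}$) must be exploited, effectively by inspecting the $\alpha_{1}$-strings generated by $\Tilde{e}_{1},\Tilde{f}_{1},\Tilde{e}_{\Bar{1}},\Tilde{f}_{\Bar{1}}$ on even-highest elements.

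Finally, the lowest weight vector is almost immediate. Since $w\mapsto S_{w}$ is a genuine action of $W$ on $B_{n}(\lambda)$, each $S_{w}$ is a bijection of the finite set $B_{n}(\lambda)$; in particular $S_{w_{0}}$ is invertible. Setting $b_{\mathrm{low}}:=S_{w_{0}}^{-1}(b_{\lambda})$ gives $S_{w_{0}}b_{\mathrm{low}}=b_{\lambda}$, which is the highest weight vector. Because $b_{\lambda}$ is the unique highest weight vector and $S_{w_{0}}$ is injective, $b_{\mathrm{low}}$ is the unique element of $B_{n}(\lambda)$ whose image under $S_{w_{0}}$ is a highest weight vector, as required.
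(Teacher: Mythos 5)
First, note that the paper itself offers no proof of this statement: it is imported verbatim from \cite{GJKKK2,GJKKK3}, so there is no internal argument to compare yours against. Judged on its own terms, your proposal is sound on the two easy flanks. The weight computation $\mathrm{wt}(\Tilde{e}_{\Bar{\imath}}b)=\mathrm{wt}(b)+\alpha_{i}$ is correct (since $\mathrm{wt}(S_{w}b)=w(\mathrm{wt}(b))$ and $w_{i}^{-1}(\alpha_{1})=\alpha_{i}$), the element of weight $\lambda$ is killed by every raising operator because $\lambda+\alpha_{i}$ is not a weight of $B_{n}(\lambda)$, and the passage to the lowest weight vector via $b_{\mathrm{low}}=S_{w_{0}}^{-1}(b_{\lambda})$ is immediate once uniqueness of the highest weight vector is known, because $S_{w_{0}}$ is a bijection of the finite set $B_{n}(\lambda)$.

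The genuine gap is exactly where you flag it: uniqueness. The claim that every $\mathfrak{gl}(n)$-highest weight element of weight $\mu\prec\lambda$ is moved by some $\Tilde{e}_{\Bar{\imath}}$ is the entire content of the theorem, and ``one analyses the decomposition\dots'' is not an argument. Two points make this unavoidable rather than a routine omission. (i) The statement cannot follow from the axioms of Definition~\ref{df:queer} alone: a disjoint union of two copies of $B_{n}(\lambda)$ satisfies every axiom listed and has two elements killed by all $\Tilde{e}_{i}$ and $\Tilde{e}_{\Bar{\imath}}$. So any proof must use the specific construction of $B_{n}(\lambda)$ --- as the crystal of the irreducible module $V(\lambda)$, or a concrete model such as semistandard decomposition tableaux or primed tableaux, or its embedding into $B_{n}(\square)^{\otimes N}$ --- which is precisely the part you defer. (ii) Your justification for multiplicity one at weight $\lambda$ is also off: for $\mathfrak{q}(n)$ the highest weight space of $V(\lambda)$ is an irreducible Clifford-algebra module of dimension generally greater than one; it is the crystal $B_{n}(\lambda)$, not the module, that has a unique element of weight $\lambda$, and that too is a theorem of \cite{GJKKK2,GJKKK3} rather than a formal consequence of extremality. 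In short, the skeleton is right, but the load-bearing step is the one left unproved.
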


%To begin with, we introduce necessary notation and definitions, some of which will be used in subsequent sections.

\section{Shifted tableaux} \label{sec:tableaux}

Let $\mathcal{P}^{+}$ denote the set of strict partitions, 
$\lambda=(\lambda_{1}>\lambda_{2}>\cdots>\lambda_{l}>\lambda_{l+1}=0)$.
For $\lambda\in\mathcal{P}^{+}$, the length $l(\lambda)$ of $\lambda$ is defined as the number of positive parts of $\lambda$. 
%and the size of $\lambda$ is defined to be $\left\vert \lambda\right\vert =\sum_{i=1}^{l(\lambda)}\lambda_{i}$.
%For $m=\left\vert \lambda \right\vert$, we write $\lambda\vdash m$.
The \emph{shifted diagram} of shape $\lambda\in\mathcal{P}^{+}$ is defined to be the set 
\[
S(\lambda)=\{(i,j)\in\mathbb{N}^{2} \mid i\leq j\leq\lambda_{i}+i-1,1\leq i\leq
l(\lambda)\}.
\]
A filling $T$ of $S(\lambda)$ with letters is called a \emph{shifted tableau} where the entry at $(i,j)$-position is denoted by $T_{(i,j)}$.

\begin{df}
An \emph{increasing shifted tableau} $T$ of shape $\lambda$ is a filling of $S(\lambda)$ with letters from the alphabet $\{1,2,\ldots \}$ such that entries are strictly increasing across rows and columns.
\end{df}

The row reading word of an increasing shifted tableau $T$ of shape $\lambda$, denoted by $\mathfrak{row}(T)$, is the sequence of entries, $T_{l(\lambda)}T_{l(\lambda)-1}\cdots T_{1}$, where $T_{i}$ is the sequence of entries of the $i$th row of $T$ read from left to right $(i=1,2,\ldots,l(\lambda))$.

\begin{ex}
The row reading word of
\setlength{\unitlength}{12pt}

\begin{center}
\begin{picture}(6,3)
\put(0,1){\makebox(2,1){$T=$}}
\put(2,2){\line(0,1){1}}
\put(3,1){\line(0,1){2}}
\put(4,0){\line(0,1){3}}
\put(5,0){\line(0,1){3}}
\put(6,2){\line(0,1){1}}
\put(4,0){\line(1,0){1}}
\put(3,1){\line(1,0){2}}
\put(2,2){\line(1,0){4}}
\put(2,3){\line(1,0){4}}

\put(4,0){\makebox(1,1){$6$}}
\put(3,1){\makebox(1,1){$4$}}
\put(4,1){\makebox(1,1){$5$}}
\put(2,2){\makebox(1,1){$1$}}
\put(3,2){\makebox(1,1){$3$}}
\put(4,2){\makebox(1,1){$4$}}
\put(5,2){\makebox(1,1){$5$}}
\end{picture}
\end{center}
is $\mathfrak{row}(T)=6451345$.
\end{ex}

\begin{df}
A \emph{primed tableau} $T$ of shape $\lambda$ is a filling of $S(\lambda)$ with letters from the alphabet, 
$\{1^{\prime}<1<2^{\prime}<2<\cdots<n\}$ such that: 
\begin{itemize}
\item[(1)] the entries are weakly increasing across row and columns,
\item[(2)] each row contains at most one $i^{\prime}$ for $i=1,\ldots,n$,
\item[(3)] each column contains at most one $i$ for $i=1,\ldots,n$, and
\item[(4)] there are no primed letters on the main diagonal.
\end{itemize}
We denote by $\mathrm{PT}_{n}(\lambda)$ the set of primed tableaux of shape $\lambda$. 
\end{df}

A primed tableau is called \emph{standard} if the set of entries consists of $1,2,\ldots,n$ each appearing either primed or unprimed exactly once for some $n$.
%Note that the entry $1^{\prime}$ does not appear in $T\in \mathrm{PT}_{n}(\lambda)$.

\begin{ex}
See Example~\ref{ex:standardization}, where the right tableau is the standard primed tableau and the left is not.
\end{ex}

\begin{rem}
A primed tableau is also called a semistandard marked shifted tableau~\cite{Cho} and is a set-valued shifted tableau~\cite{IN,PP} whose entries are all singleton sets.
\end{rem}

\section{The FPF-involution Coxeter-Knuth insertion} \label{sec:insertion}

Let us start by recalling the definition of reduced words for an element in $\mathfrak{S}_{\infty}$, the symmetric group generated by simple transpositions $s_{i}$ $(i=1,2,\ldots)$.  
We identify the product of simple transpositions with their sequence of indices or the word.
For a word $w$, we denote by $\left\vert w \right\vert$ the number of letters in $w$ and define 
$l(w)=\min \{l \mid \exists i_{1},\ldots,i_{l}, w=s_{i_{1}}\cdots s_{i_{l}} \}$.
A word $w$ with $\left\vert w\right\vert=l(w)$ is referred to as a \emph{reduced word}.
%The set of all reduced words of the element $z\in \mathfrak{S}_{\infty}$ is denoted by $\mathcal{R}(z)$.
Two reduced words $w$ and $w^{\prime}$ are called \emph{Coxeter-Knuth equivalent}, denoted by $w \stackrel{\mathsf{CK}}{\sim} w^{\prime}$, if $w^{\prime}$ can be obtained from $w$ by a finite sequence of Coxeter-Knuth relations on three consecutive letters $(a+1)a(a+1)\sim a(a+1)a$, $bac\sim bca$, and $acb\sim cab$, where $a<b<c$ \cite{BS}. 
Note that Coxeter-Knuth relations here are called Coxeter braid relation in \cite{Mar} and are not the Coxeter-Knuth relations in \cite{Mar}.
It is confusing but we follow the terminology of \cite{BS} here.

Let $\Theta$ be the infinite product of cycles $(12)(34)(56)\cdots$ and define  
$\mathfrak{F}_{\infty}=\{ \pi ^{-1}\Theta \pi ; \pi \in \mathfrak{S}_{\infty}\}$.
We identify $z \in \mathfrak{F}_{\infty}$ with the word $z_{1}z_{2}\cdots z_{n}$ with $n$ being even as follows: 
$z(i)=z_{i}$ for $i=1,2,\ldots ,n$ and $z(i)=\Theta (i)$ for $i>n$, where $i\neq z_{i}$ for all $i=1,2,\ldots ,n$ (fixed-point-free). 

\begin{ex}
$z=
\begin{pmatrix}
1 & 2 & 3 & 4 & 5 & 6 & 7 & 8 &\ldots \\
5 & 4 & 6 & 2 & 1 & 3 & 8 & 7 & \ldots
\end{pmatrix}
\in \mathfrak{F}_{\infty}
$
is identified with $546213$.
\end{ex}

\begin{df}
A \emph{symplectic Hecke word} for $z \in \mathfrak{F}_{\infty}$ is any word $i_{1}i_{2}\cdots i_{l}$ such that $z=s_{i_{l}}\cdots s_{i_{2}}s_{i_{1}}\Theta s_{i_{1}}s_{i_{2}}\cdots s_{i_{l}}$.
The set of symplectic Hecke words for $z\in \mathfrak{F}_{\infty}$ is denoted by $\mathcal{H}_{\mathsf{Sp}}(z)$.
The reduced words in $\mathcal{H}_{\mathsf{Sp}}(z)$ are called \emph{FPF-involution words}~\cite{HM,HMP18,HMP17}.
The set of FPF-involution words in $\mathcal{H}_{\mathsf{Sp}}(z)$ is denoted by $\hat{\mathcal{R}}_{\mathsf{FPF}}(z)$.
\end{df}

In what follows, when we refer to $z$-related sets for $z\in \mathfrak{F}_{\infty}$, we do not attach the phrase ``for $z\in \mathfrak{F}_{\infty}$''.
%In the sequel, a \emph{letter} is referred to as a positive integer.

\begin{rem}[\cite{Mar}]
Every symplectic Hecke word begins with an even letter.
\end{rem}

Two FPF-involution words, $w$ and $w^{\prime}$ are called \emph{equivalent}, denoted by $w\stackrel{\mathsf{Sp}}{\sim} w^{\prime}$, if $w^{\prime}$ can be obtained from $w$ by a finite sequence of relations on consecutive letters $ab\sim ba$ ($\left\vert a-b\right\vert >1$) and $a(a+1)a\sim (a+1)a(a+1)$ and the relation with $i_{1}(i_{1}-1)\cdots i_{n}\sim i_{1}(i_{1}+1)\cdots i_{n}$ ($i_{1}\geq 2$).

\begin{thm}[\cite{Mar}]
A symplectic Hecke word is an FPF-involution word if and only if its equivalence class contains no words with equal adjacent letters.
\end{thm}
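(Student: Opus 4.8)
The plan is to recast everything in terms of ordinary reduced words and then treat the two implications by different tools. To a word $w=i_1i_2\cdots i_l$ I associate the permutation $\pi_w=s_{i_1}s_{i_2}\cdots s_{i_l}\in\mathfrak{S}_\infty$. Unwinding the definitions, $w$ is a symplectic Hecke word for $z$ precisely when $\pi_w^{-1}\Theta\pi_w=z$, and $w$ is an FPF-involution word precisely when $l=\ell(\pi_w)$, i.e. when $w$ is a reduced expression for $\pi_w$. I first check that all three generators of $\stackrel{\mathsf{Sp}}{\sim}$ preserve both $z$ and the number of letters. For the commutation $ab\sim ba$ $(|a-b|>1)$ and the braid $a(a+1)a\sim(a+1)a(a+1)$ this is clear, since they leave $\pi_w$ unchanged. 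For the initial relation, where (by the remark that every symplectic Hecke word begins with an even letter) the first letter is $i_1=2k$, it follows from the identity
\[
s_{2k-1}s_{2k}\,\Theta\,s_{2k}s_{2k-1}=s_{2k+1}s_{2k}\,\Theta\,s_{2k}s_{2k+1},
\]
which one verifies by conjugating the two transpositions $(2k-1,2k)$ and $(2k+1,2k+2)$ of $\Theta$; as the remaining letters act by the same conjugation on both sides, $z$ is unchanged.

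For the implication ``$w$ not reduced $\Rightarrow$ the class of $w$ contains a word with two equal adjacent letters,'' I would appeal to the classical solution of the word problem in the symmetric group: a word fails to be reduced if and only if it can be carried, using only commutation and braid moves, to a word in which some generator is immediately repeated. Since commutation and braid moves are among the relations generating $\stackrel{\mathsf{Sp}}{\sim}$, applying this procedure to a non-reduced $w$ produces $w'\stackrel{\mathsf{Sp}}{\sim}w$ with equal adjacent letters. This settles the ``if'' direction and uses none of the fixed-point-free structure.

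For the converse it suffices to prove that $\stackrel{\mathsf{Sp}}{\sim}$ preserves reducedness, because a reduced expression never contains an immediately repeated generator (such a repetition would cancel and shorten the word), so a class consisting entirely of reduced words contains no word with equal adjacent letters. Stability under commutation and braid is immediate since $\pi_w$ is fixed, so the whole weight of the argument rests on the initial relation, which replaces $\pi_w$ by $c\,\pi_w$ for a fixed element $c$ of the centralizer of $\Theta$ and hence moves $\pi_w$ within the single coset $\{\pi:\pi^{-1}\Theta\pi=z\}$. I would establish that toggling the second letter between $2k-1$ and $2k+1$ keeps the word reduced by a Coxeter-length analysis, examining how left multiplication by $c$ interacts with a reduced expression for $\pi_w$ and using the precise hypotheses under which the relation is applied between two FPF-involution words.

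The step I expect to be the genuine obstacle is exactly this last one. In isolation the toggle can turn a reduced word into a non-reduced one, so the argument cannot be purely combinatorial on the letters: it must invoke the coupling of $\pi_w$ to $\Theta$ through $z=\pi_w^{-1}\Theta\pi_w$, together with the fact that \emph{both} sides are legitimate symplectic Hecke words, to rule out the configurations in which the Coxeter length would drop. Pinning down which descents of $\pi_w$ are forced by this coupling, and thereby showing that the correctly formulated initial relation maps reduced representatives to reduced representatives, is the technical heart of the proof.
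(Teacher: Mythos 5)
The paper offers no proof of this statement---it is quoted from \cite{Mar}---so your attempt can only be judged on its own terms, and it has a genuine gap that originates in your very first reduction: you identify ``FPF-involution word'' with ``reduced expression for $\pi_w$''. That is not the right notion. The set $\{\pi\in\mathfrak{S}_\infty : \pi^{-1}\Theta\pi=z\}$ is an entire left coset of the centralizer of $\Theta$, and $\hat{\mathcal{R}}_{\mathsf{FPF}}(z)$ consists of the words of \emph{minimal length} in $\mathcal{H}_{\mathsf{Sp}}(z)$, equivalently the reduced expressions for the minimal-length elements of that coset. A word can be a perfectly reduced expression for a non-minimal element of the coset and still fail to be an FPF-involution word. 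Concretely, $w=2134$ is reduced for $\pi_w=s_2s_1s_3s_4$ and satisfies $\pi_w^{-1}\Theta\pi_w=(s_2s_4)^{-1}\Theta(s_2s_4)=z$, but the minimal length for this $z$ is $2$ (achieved by $24$), so $w\notin\hat{\mathcal{R}}_{\mathsf{FPF}}(z)$; and indeed the initial relation gives $2134\stackrel{\mathsf{Sp}}{\sim}2334$, which has equal adjacent letters. Read with your definition, this example would make the theorem false outright, so the definition cannot be the one you adopted. (The paper's own rendering of Marberg's definitions is admittedly loose---under literal conjugation the one-letter word $1$ would be a symplectic Hecke word for $\Theta$, contradicting the stated remark that such words begin with an even letter---but the intended meaning of ``reduced'' here is minimality in $\mathcal{H}_{\mathsf{Sp}}(z)$.)

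This misidentification breaks the ``if'' direction. Your appeal to the classical word problem (a non-reduced word is carried by commutation and braid moves to one with a repeated adjacent letter) disposes only of words that are non-reduced as expressions for $\pi_w$; it says nothing about words such as $2134$ that are reduced for $\pi_w$ but whose $\pi_w$ is not of minimal length in its coset. Treating those is the real content of the theorem: one must show that whenever the length can be lowered within the coset, some sequence of commutations and braids exposes a position where the initial relation (which multiplies $\pi_w$ on the left by the length-four centralizing element $(2k-1,2k+1)(2k,2k+2)$) produces a repetition. You give no argument for this. Conversely, the step you single out as the technical heart---that the initial relation preserves reducedness---evaporates once the correct definition is used: all three relations preserve the number of letters and (as you correctly verify) preserve $z$, hence preserve minimality, and a minimal-length word in $\mathcal{H}_{\mathsf{Sp}}(z)$ cannot contain equal adjacent letters, since deleting such a pair would yield a strictly shorter word in $\mathcal{H}_{\mathsf{Sp}}(z)$. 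So the direction you treat as routine is where the work lies, and the direction you flag as the obstacle is the easy one.
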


%\begin{thm}
%If $z \in \mathfrak{F}_{\infty}$, then $\hat{\mathcal{R}}_{\mathsf{FPF}}(z)$ is an equivalent class.
%\end{thm}

Let $w=u_{1}u_{2}\cdots u_{l}\in \hat{\mathcal{R}}_{\mathsf{FPF}}(z)$.
From $w$, we recursively construct a sequence of pairs of tableaux,
\[
(\emptyset,\emptyset)=(P^{(0)},Q^{(0)}),(P^{(1)},Q^{(1)}),\ldots,(P^{(l)},Q^{(l)})=(P_{\mathsf{Sp}}(w),Q_{\mathsf{Sp}}(w)),
\]
where $P^{(k)}$ is an increasing shifted tableau and $Q^{(k)}$ is a primed tableau.
To obtain the tableau $P^{(k)}$, insert the letter $u_{k}$ into $P^{(k-1)}$ as follows:
First insert $u_{k}$ into the first row of $P^{(k-1)}$.
The rules for inserting $a$ into a row or column, denoted by $L$,  of the increasing shifted  tableau $T$ are as follows:
Find the smallest entry $b$ of $L$ with $a \leq b$.
If no such entry exists, then add $a$ to the end of $L$.
Stop.
Otherwise, 

\begin{enumerate}
\item If $a=b$, then leave $L$ unchanged and insert $a+1$ to the row below if $L$ is a row or to the next column to the right if $L$ is a column. 

\item If $L$ is a row and $b$ is the first entry of $L$ and $a\not\equiv b \pmod{2}$, then leave $L$ unchanged and insert $a+2$ to the next column to the right.

\item In all other cases, replace $b$ by $a$ in $L$ and insert $b$ to the row below if $L$ is a row or to the next column to the right if $L$ is a column or $b$ was on the main diagonal.
\end{enumerate}
To obtain $Q^{(k)}$, we add a box \framebox[12pt]{$k$} (resp. \framebox[12pt]{$k^{\prime}$}) to $Q^{(k-1)}$ at the terminated position if the insertion terminated with row (resp. column) insertion.
Tableaux $P_{\mathsf{Sp}}(w)$ and $Q_{\mathsf{Sp}}(w)$ are referred to as the \emph{insertion tableau} and the \emph{recording tableau}, respectively.
This algorithm is called the \emph{FPF-involution Coxeter-Knuth insertion}~\cite{Mar} and denoted by the map $\mathrm{H}_{\mathsf{Sp}}:w \mapsto (P_{\mathsf{Sp}}(w),Q_{\mathsf{Sp}}(w))$.
This is equivalent to the symplectic shifted Hecke insertion~\cite{Mar} restricted to FPF-involution words and is the shifted analogue of Edelman-Greene insertion~\cite{EG}.
The insertion process of a letter $x$ or a word $u_{1}u_{2}\cdots$ into an increasing shifted tableau $T$ is denoted by $T\stackrel{\mathsf{Sp}}{\leftarrow}x$ or $T\stackrel{\mathsf{Sp}}{\leftarrow}u_{1}u_{2}\cdots$, which also denotes the resulting tableau. 

\begin{ex}

The insertion steps in 
\setlength{\unitlength}{12pt}
\begin{picture}(4,2)
\put(0,1){\line(0,1){1}}
\put(1,0){\line(0,1){2}}
\put(2,0){\line(0,1){2}}
\put(1,0){\line(1,0){1}}
\put(0,1){\line(1,0){2}}
\put(0,2){\line(1,0){2}}
\put(0,1){\makebox(1,1){$2$}}
\put(1,0){\makebox(1,1){$6$}}
\put(1,1){\makebox(1,1){$5$}}
\put(2,0.25){\makebox(2,2){$\stackrel{\mathsf{Sp}}{\leftarrow}3$}}
\end{picture}
are
 
\setlength{\unitlength}{12pt}
\begin{center}
\begin{picture}(20,4)
\put(0,3){\line(0,1){1}}
\put(1,2){\line(0,1){2}}
\put(2,2){\line(0,1){2}}
\put(1,2){\line(1,0){1}}
\put(0,3){\line(1,0){2}}
\put(0,4){\line(1,0){2}}

\put(0,3){\makebox(1,1){$2$}}
\put(1,2){\makebox(1,1){$6$}}
\put(1,3){\makebox(1,1){$5$}}

\put(2,3){\makebox(1,1){$\leftarrow$}}

\put(3,3){\makebox(1,1){$3$}}

\put(4,2){\makebox(2,2){$\rightsquigarrow$}}

\put(6,3){\line(0,1){1}}
\put(7,2){\line(0,1){2}}
\put(8,2){\line(0,1){2}}
\put(7,2){\line(1,0){1}}
\put(6,3){\line(1,0){2}}
\put(6,4){\line(1,0){2}}

\put(6,3){\makebox(1,1){$2$}}
\put(7,2){\makebox(1,1){$6$}}
\put(7,3){\makebox(1,1){$3$}}

\put(8,2){\makebox(1,1){$\leftarrow$}}

\put(9,2){\makebox(1,1){$5$}}

\put(10,2){\makebox(2,2){$\rightsquigarrow$}}

\put(12,3){\line(0,1){1}}
\put(13,2){\line(0,1){2}}
\put(14,2){\line(0,1){2}}
\put(13,2){\line(1,0){1}}
\put(12,3){\line(1,0){2}}
\put(12,4){\line(1,0){2}}

\put(12,3){\makebox(1,1){$2$}}
\put(13,2){\makebox(1,1){$6$}}
\put(13,3){\makebox(1,1){$3$}}

\put(14,1){\makebox(1,1){$\uparrow$}}

\put(14,0){\makebox(1,1){$7$}}

\put(15,2){\makebox(2,2){$\rightsquigarrow$}}

\put(17,3){\line(0,1){1}}
\put(18,2){\line(0,1){2}}
\put(19,2){\line(0,1){2}}
\put(20,3){\line(0,1){1}}
\put(18,2){\line(1,0){1}}
\put(17,3){\line(1,0){3}}
\put(17,4){\line(1,0){3}}

\put(17,3){\makebox(1,1){$2$}}
\put(18,2){\makebox(1,1){$6$}}
\put(18,3){\makebox(1,1){$3$}}
\put(19,3){\makebox(1,1){$7$}}

\end{picture}.
\end{center}

The insertion steps in 
\setlength{\unitlength}{12pt}
\begin{picture}(4,2)

\put(0,1){\line(0,1){1}}
\put(1,0){\line(0,1){2}}
\put(2,0){\line(0,1){2}}
\put(1,0){\line(1,0){1}}
\put(0,1){\line(1,0){2}}
\put(0,2){\line(1,0){2}}

\put(0,1){\makebox(1,1){$2$}}
\put(1,0){\makebox(1,1){$4$}}
\put(1,1){\makebox(1,1){$3$}}

\put(2,0.25){\makebox(2,2){$\stackrel{\mathsf{Sp}}{\leftarrow}2$}}

\end{picture}
are
 
\setlength{\unitlength}{12pt}

\begin{center}
\begin{picture}(20,4)

\put(0,3){\line(0,1){1}}
\put(1,2){\line(0,1){2}}
\put(2,2){\line(0,1){2}}
\put(1,2){\line(1,0){1}}
\put(0,3){\line(1,0){2}}
\put(0,4){\line(1,0){2}}

\put(0,3){\makebox(1,1){$2$}}
\put(1,2){\makebox(1,1){$4$}}
\put(1,3){\makebox(1,1){$3$}}

\put(2,3){\makebox(1,1){$\leftarrow$}}

\put(3,3){\makebox(1,1){$2$}}

\put(4,2){\makebox(2,2){$\rightsquigarrow$}}

\put(6,3){\line(0,1){1}}
\put(7,2){\line(0,1){2}}
\put(8,2){\line(0,1){2}}
\put(7,2){\line(1,0){1}}
\put(6,3){\line(1,0){2}}
\put(6,4){\line(1,0){2}}

\put(6,3){\makebox(1,1){$2$}}
\put(7,2){\makebox(1,1){$4$}}
\put(7,3){\makebox(1,1){$3$}}

\put(8,2){\makebox(1,1){$\leftarrow$}}

\put(9,2){\makebox(1,1){$3$}}

\put(10,2){\makebox(2,2){$\rightsquigarrow$}}

\put(12,3){\line(0,1){1}}
\put(13,2){\line(0,1){2}}
\put(14,2){\line(0,1){2}}
\put(13,2){\line(1,0){1}}
\put(12,3){\line(1,0){2}}
\put(12,4){\line(1,0){2}}

\put(12,3){\makebox(1,1){$2$}}
\put(13,2){\makebox(1,1){$4$}}
\put(13,3){\makebox(1,1){$3$}}

\put(14,1){\makebox(1,1){$\uparrow$}}

\put(14,0){\makebox(1,1){$5$}}

\put(15,2){\makebox(2,2){$\rightsquigarrow$}}

\put(17,3){\line(0,1){1}}
\put(18,2){\line(0,1){2}}
\put(19,2){\line(0,1){2}}
\put(20,3){\line(0,1){1}}
\put(18,2){\line(1,0){1}}
\put(17,3){\line(1,0){3}}
\put(17,4){\line(1,0){3}}

\put(17,3){\makebox(1,1){$2$}}
\put(18,2){\makebox(1,1){$4$}}
\put(18,3){\makebox(1,1){$3$}}
\put(19,3){\makebox(1,1){$5$}}

\end{picture}.
\end{center}

The insertion steps in 
\setlength{\unitlength}{12pt}
\begin{picture}(4,2)

\put(0,1){\line(0,1){1}}
\put(1,0){\line(0,1){2}}
\put(2,0){\line(0,1){2}}
\put(1,0){\line(1,0){1}}
\put(0,1){\line(1,0){2}}
\put(0,2){\line(1,0){2}}

\put(0,1){\makebox(1,1){$4$}}
\put(1,0){\makebox(1,1){$6$}}
\put(1,1){\makebox(1,1){$5$}}

\put(2,0.25){\makebox(2,2){$\stackrel{\mathsf{Sp}}{\leftarrow}2$}}

\end{picture}
are
 
\setlength{\unitlength}{12pt}

\begin{center}
\begin{picture}(18,4)

\put(0,3){\line(0,1){1}}
\put(1,2){\line(0,1){2}}
\put(2,2){\line(0,1){2}}
\put(1,2){\line(1,0){1}}
\put(0,3){\line(1,0){2}}
\put(0,4){\line(1,0){2}}

\put(0,3){\makebox(1,1){$4$}}
\put(1,2){\makebox(1,1){$6$}}
\put(1,3){\makebox(1,1){$5$}}

\put(2,3){\makebox(1,1){$\leftarrow$}}

\put(3,3){\makebox(1,1){$2$}}

\put(4,2){\makebox(2,2){$\rightsquigarrow$}}

\put(6,3){\line(0,1){1}}
\put(7,2){\line(0,1){2}}
\put(8,2){\line(0,1){2}}
\put(7,2){\line(1,0){1}}
\put(6,3){\line(1,0){2}}
\put(6,4){\line(1,0){2}}

\put(6,3){\makebox(1,1){$2$}}
\put(7,2){\makebox(1,1){$6$}}
\put(7,3){\makebox(1,1){$5$}}

\put(7,1){\makebox(1,1){$\uparrow$}}

\put(7,0){\makebox(1,1){$4$}}

\put(8,2){\makebox(2,2){$\rightsquigarrow$}}

\put(10,3){\line(0,1){1}}
\put(11,2){\line(0,1){2}}
\put(12,2){\line(0,1){2}}
\put(11,2){\line(1,0){1}}
\put(10,3){\line(1,0){2}}
\put(10,4){\line(1,0){2}}

\put(10,3){\makebox(1,1){$2$}}
\put(11,2){\makebox(1,1){$6$}}
\put(11,3){\makebox(1,1){$4$}}

\put(12,1){\makebox(1,1){$\uparrow$}}
\put(12,0){\makebox(1,1){$5$}}

\put(13,2){\makebox(2,2){$\rightsquigarrow$}}

\put(15,3){\line(0,1){1}}
\put(16,2){\line(0,1){2}}
\put(17,2){\line(0,1){2}}
\put(18,3){\line(0,1){1}}
\put(16,2){\line(1,0){1}}
\put(15,3){\line(1,0){3}}
\put(15,4){\line(1,0){3}}

\put(15,3){\makebox(1,1){$2$}}
\put(16,2){\makebox(1,1){$6$}}
\put(16,3){\makebox(1,1){$4$}}
\put(17,3){\makebox(1,1){$5$}}

\end{picture}.
\end{center}

When we specify a row or column indicated by an arrow, we do not attach the symbol $\mathsf{Sp}$ in the arrow.
This convention is also used in Appendix A.
\end{ex}

\begin{ex} \label{ex:insertion}
The FPF-involution Coxeter-Knuth insertion of an FPF-involution word $6241$ yields

\setlength{\unitlength}{12pt}

\begin{center}
\begin{picture}(19,2)
\put(0,0){\makebox(5,2){$P_{\mathsf{Sp}}(6241)=$}}

\put(5,1){\line(0,1){1}}
\put(6,0){\line(0,1){2}}
\put(7,0){\line(0,1){2}}
\put(8,1){\line(0,1){1}}
\put(6,0){\line(1,0){1}}
\put(5,1){\line(1,0){3}}
\put(5,2){\line(1,0){3}}

\put(5,1){\makebox(1,1){$2$}}
\put(6,0){\makebox(1,1){$6$}}
\put(6,1){\makebox(1,1){$3$}}
\put(7,1){\makebox(1,1){$4$}}

\put(8.5,0){\makebox(2,2){$\text{and}$}}

\put(11,0){\makebox(5,2){$Q_{\mathsf{Sp}}(6241)=$}}

\put(16,1){\line(0,1){1}}
\put(17,0){\line(0,1){2}}
\put(18,0){\line(0,1){2}}
\put(19,1){\line(0,1){1}}
\put(17,0){\line(1,0){1}}
\put(16,1){\line(1,0){3}}
\put(16,2){\line(1,0){3}}

\put(16,1){\makebox(1,1){$1$}}
\put(17,0){\makebox(1,1){$3$}}
\put(17,1){\makebox(1,1){$2^{\prime}$}}
\put(18,1){\makebox(1,1){$4^{\prime}$}}
\end{picture}.
\end{center}

\end{ex}

The following results are due to Marberg~\cite{Mar}.

\begin{thm}[\cite{Mar}]
Suppose that $T$ is an increasing shifted tableau and $a$ is a letter such that $\mathfrak{row}(T)a$ is an FPF-involution word, then $\mathfrak{row}(T\stackrel{\mathsf{Sp}}{\leftarrow}a)\stackrel{\mathsf{Sp}}{\sim}\mathfrak{row}(T)a$.
\end{thm}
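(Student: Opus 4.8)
The plan is to show that inserting $a$ transforms the reading word $\mathfrak{row}(T)$ into $\mathfrak{row}(T\stackrel{\mathsf{Sp}}{\leftarrow}a)$ through a sequence of $\stackrel{\mathsf{Sp}}{\sim}$-moves, by following the insertion one bumping step at a time. Write $\mathfrak{row}(T)=R_{l}R_{l-1}\cdots R_{1}$, where $R_{i}$ is the $i$th row read left to right and $R_{1}$ is the top row; the key bookkeeping fact is that a letter bumped from a row into the row below it lands in a block read \emph{earlier} in $\mathfrak{row}$, so bumped letters migrate to the left of the reading word. I would first record that the ordinary relations $ab\sim ba$ ($|a-b|>1$) and $a(a+1)a\sim(a+1)a(a+1)$ are two-sided congruences, hence may be applied inside any word, whereas the special relation $i_{1}(i_{1}-1)\cdots\sim i_{1}(i_{1}+1)\cdots$ is anchored at the start and may be used only when the two modified letters occupy the initial block of the current reading word. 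Controlling when that occurs is exactly where the FPF/diagonal structure enters, in particular the fact that every symplectic Hecke word begins with an even letter.

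The core is a single-row lemma: if $a$ is inserted into a row $L$, producing the new row $L'$ and (when bumping occurs) a bumped letter $c$ destined for the row below, then $La\stackrel{\mathsf{Sp}}{\sim}cL'$, the bumped letter appearing on the left because the lower row precedes $L$ in $\mathfrak{row}$; in the terminal append case one has $La=L'$ outright. I would verify this case by case against the three insertion rules. The replacement case (3) is handled by commuting $a$ rightward past the larger entries of $L$ (which differ from $a$ by more than $1$, or else are resolved by a braid move) until it reaches the bumped slot; the equality case (1), where $a=b$ and $a+1$ is emitted, is handled by $a(a+1)a\sim(a+1)a(a+1)$; and the first-entry parity case (2), where $a+2$ is emitted, is handled by the special relation. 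Granting the lemma and the congruence property, an induction down the rows assembles the global claim, each step having the shape $R_{i+1}(R_{i}a)\stackrel{\mathsf{Sp}}{\sim}R_{i+1}(cR_{i}')=(R_{i+1}c)R_{i}'\stackrel{\mathsf{Sp}}{\sim}(c'R_{i+1}')R_{i}'$, telescoping to $\mathfrak{row}(T\stackrel{\mathsf{Sp}}{\leftarrow}a)$.

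The genuine difficulty, and the source of length, is that the shifted insertion does not cascade purely downward: on reaching the main diagonal the process switches to column insertion and thereafter moves rightward and \emph{upward}, as in the worked example where the emitted $7$ lands in the top row. I would therefore prove a companion column-insertion lemma asserting the analogous local equivalence, but the bookkeeping is harder, since a single column meets several rows and so one elementary column-bumping step alters $\mathfrak{row}(T)$ non-locally across the blocks $R_{i}$; one must check that the net rearrangement is still realized by $\stackrel{\mathsf{Sp}}{\sim}$-moves (concretely, the emitted letter is commuted rightward through the intervening row entries). The main obstacle is precisely this column phase together with the row-to-column transition at the diagonal: one must verify that whenever case (2) and the special relation are invoked, the modified letters genuinely sit at the start of the current reading word, an invariant that rests on the structural constraints on diagonal and first-column entries forced by $\mathfrak{row}(T)a$ being an FPF-involution word. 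Maintaining these invariants and discharging the resulting diagonal and column case analysis is the bulk of the argument, which is why a complete verification is necessarily lengthy.
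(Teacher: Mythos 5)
This theorem is not proved in the paper at all: it is quoted verbatim from Marberg's work (the block of results introduced by ``The following results are due to Marberg~\cite{Mar}''), so there is no internal proof to compare your argument against. Judged on its own terms, your proposal is the natural strategy one would expect (a local one-step bumping lemma plus a telescoping induction through the reading word), and you correctly locate the two real difficulties: the anchored relation $i_{1}(i_{1}-1)\cdots\sim i_{1}(i_{1}+1)\cdots$ can only be applied at the very front of the word, and the column phase moves rightward and upward through several blocks of $\mathfrak{row}(T)$ at once.

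However, as written the proposal has genuine gaps rather than being a proof. Your single-row lemma $La\stackrel{\mathsf{Sp}}{\sim}cL'$, with the bumped letter migrating leftward toward the row below, is only correct for rule (3) when $b$ is not on the main diagonal (and there it follows from commutations alone, since the entries to the right of $b$ exceed $a+1$ and those to its left are below $a$). In the two cases that carry all the difficulty the emitted letter does \emph{not} go to the row below even though $L$ is a row: rule (2) sends $a+2$ to the column to the right, and rule (3) with $b$ on the main diagonal likewise sends $b$ rightward; so the row phase itself already produces rightward emissions at every diagonal box, and the clean row/column dichotomy of your two lemmas does not hold. Moreover, to justify rule (2) by the special relation you implicitly need $b=a+1$ (so that $ba\sim b(a+2)$ is an instance of it); that this is forced by $\mathfrak{row}(T)a$ being an FPF-involution word requires an argument of the type the paper proves separately as Lemma~A.3, and you also need to check that every letter in the rows below row $i$ exceeds $a+2$ so that the prefix of the reading word can be commuted out of the way before the anchored relation is applied. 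You name these invariants but do not establish them, and the column-insertion bookkeeping is deferred entirely. So the plan is sound, but the parts you describe as ``the bulk of the argument'' are exactly the parts that are missing.
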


\begin{thm}[\cite{Mar}] \label{thm:pathology} 
If $w$ and $w^{\prime}$ are FPF-involution words with $P_{\mathsf{Sp}}(w)=P_{\mathsf{Sp}}(w^{\prime})$, then $w\stackrel{\mathsf{Sp}}{\sim} w^{\prime}$.
\end{thm}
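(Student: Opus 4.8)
The plan is to reduce the statement to a single \emph{compatibility lemma}: for every FPF-involution word $w\in\hat{\mathcal{R}}_{\mathsf{FPF}}(z)$ the row reading word of its insertion tableau lies in the same $\stackrel{\mathsf{Sp}}{\sim}$-class as $w$, that is, $\mathfrak{row}(P_{\mathsf{Sp}}(w))\stackrel{\mathsf{Sp}}{\sim}w$. Granting this lemma the theorem is immediate: if $P_{\mathsf{Sp}}(w)=P_{\mathsf{Sp}}(w^{\prime})$, then the two words share the same insertion tableau and hence the same reading word, so
\[
w\stackrel{\mathsf{Sp}}{\sim}\mathfrak{row}(P_{\mathsf{Sp}}(w))=\mathfrak{row}(P_{\mathsf{Sp}}(w^{\prime}))\stackrel{\mathsf{Sp}}{\sim}w^{\prime},
\]
and transitivity of $\stackrel{\mathsf{Sp}}{\sim}$ gives $w\stackrel{\mathsf{Sp}}{\sim}w^{\prime}$.

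I would prove the compatibility lemma by induction on the length $|w|$, the empty word being the base case. For the inductive step write $w=u_{1}u_{2}\cdots u_{l}$ and set $T=P_{\mathsf{Sp}}(u_{1}\cdots u_{l-1})$, so that $P_{\mathsf{Sp}}(w)=T\stackrel{\mathsf{Sp}}{\leftarrow}u_{l}$. The preceding theorem on insertion, applied to $T$ and the letter $u_{l}$, yields
\[
\mathfrak{row}(P_{\mathsf{Sp}}(w))=\mathfrak{row}(T\stackrel{\mathsf{Sp}}{\leftarrow}u_{l})\stackrel{\mathsf{Sp}}{\sim}\mathfrak{row}(T)\,u_{l}.
\]
By the inductive hypothesis $\mathfrak{row}(T)\stackrel{\mathsf{Sp}}{\sim}u_{1}\cdots u_{l-1}$, and appending the single letter $u_{l}$ on the right preserves this equivalence, so $\mathfrak{row}(T)\,u_{l}\stackrel{\mathsf{Sp}}{\sim}u_{1}\cdots u_{l}=w$; chaining the two displays completes the induction.

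Two points require care. First, to invoke the insertion theorem at each step one must know that $\mathfrak{row}(T)\,u_{l}$ is again an FPF-involution word. Here I would use that $u_{1}\cdots u_{l-1}$ and $u_{1}\cdots u_{l}$ are prefixes of the FPF-involution word $w$ and are therefore themselves FPF-involution words, that $T=P_{\mathsf{Sp}}(u_{1}\cdots u_{l-1})$ has exactly $l-1$ boxes (the reduced-word insertion adds one box per letter), and that $\mathfrak{row}(T)\,u_{l}$ then has length $l$ while representing the same element of $\mathfrak{F}_{\infty}$ as the reduced word $u_{1}\cdots u_{l}$, hence is itself reduced.

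Second — and this is the genuine technical obstacle — I must justify that $\stackrel{\mathsf{Sp}}{\sim}$ is preserved under right concatenation by a letter. This is not purely formal, because the defining moves of $\stackrel{\mathsf{Sp}}{\sim}$, in particular the initial relation $i_{1}(i_{1}-1)\cdots\sim i_{1}(i_{1}+1)\cdots$, are declared only between FPF-involution words, so one must ensure that every intermediate word in a transforming sequence stays reduced after $u_{l}$ is appended. I would resolve this by passing to the characterization of $\stackrel{\mathsf{Sp}}{\sim}$ as ``representing the same element of $\mathfrak{F}_{\infty}$'': appending a letter $a$ sends the associated element $z$ to $s_{a}zs_{a}$, an operation depending only on $z$ and $a$, so it is manifestly well defined on $\stackrel{\mathsf{Sp}}{\sim}$-classes of reduced words of a fixed length. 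With right-invariance secured, the induction, and hence the theorem, go through.
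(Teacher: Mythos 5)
The paper does not prove this statement at all: it is quoted verbatim from Marberg's paper \cite{Mar}, so there is no internal argument to compare yours against. Judged on its own, your proposal is correct and is essentially the standard derivation of this fact from the single-letter insertion theorem $\mathfrak{row}(T\stackrel{\mathsf{Sp}}{\leftarrow}a)\stackrel{\mathsf{Sp}}{\sim}\mathfrak{row}(T)a$ that the paper quotes immediately before it. The reduction to the compatibility lemma $\mathfrak{row}(P_{\mathsf{Sp}}(w))\stackrel{\mathsf{Sp}}{\sim}w$ is the right move, the induction is set up correctly, and you have identified the two genuine points of care (that $\mathfrak{row}(T)u_{l}$ is again an FPF-involution word, and that $\stackrel{\mathsf{Sp}}{\sim}$ is stable under appending a letter). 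Your justification of the first point is fine: prefixes of FPF-involution words are FPF-involution words, each insertion step adds exactly one box, and a length-$l$ word representing $z$ with $\hat{\ell}_{\mathsf{FPF}}(z)=l$ is automatically reduced.

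One remark on the second point. To get right-invariance you appeal to the characterization of $\stackrel{\mathsf{Sp}}{\sim}$ as ``being reduced words for the same element of $\mathfrak{F}_{\infty}$.'' The easy direction (equivalent implies same element) is all you actually computed, but well-definedness on equivalence classes in the way you phrase it implicitly uses the converse, namely that $\hat{\mathcal{R}}_{\mathsf{FPF}}(z)$ is a single $\stackrel{\mathsf{Sp}}{\sim}$-class. That is a nontrivial theorem of Hamaker--Marberg--Pawlowski not stated in this paper, and if you grant it, the whole theorem collapses to the observation that $w$ and $w'$ represent the same $z$. You do not need it: in a chain of elementary moves $v=v_{0}\to v_{1}\to\cdots\to v_{k}=v'$, every move acts on consecutive (or initial) letters and so still applies after a letter $a$ is appended on the right, each $v_{i}a$ has the same length and represents the same element of $\mathfrak{F}_{\infty}$ as $va$, and hence each $v_{i}a$ is reduced whenever $va$ is. That closes the only soft spot in your argument by purely local means, and with it the proof is complete.
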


The converse of Theorem~\ref{thm:pathology} does not hold.
For example, we have $26434 \stackrel{\mathsf{Sp}}{\sim} 42346$ but

\setlength{\unitlength}{12pt}

\begin{center}
\begin{picture}(21,2)
\put(0,0){\makebox(6,2){$P_{\mathsf{Sp}}(26434)=$}}

\put(6,1){\line(0,1){1}}
\put(7,0){\line(0,1){2}}
\put(8,0){\line(0,1){2}}
\put(9,0){\line(0,1){2}}
\put(7,0){\line(1,0){2}}
\put(6,1){\line(1,0){3}}
\put(6,2){\line(1,0){3}}

\put(6,1){\makebox(1,1){$2$}}
\put(7,0){\makebox(1,1){$4$}}
\put(7,1){\makebox(1,1){$3$}}
\put(8,0){\makebox(1,1){$6$}}
\put(8,1){\makebox(1,1){$4$}}

\put(9.5,0){\makebox(2,2){$\text{and}$}}

\put(11,0){\makebox(6,2){$P_{\mathsf{Sp}}(42346)=$}}

\put(17,1){\line(0,1){1}}
\put(18,0){\line(0,1){2}}
\put(19,0){\line(0,1){2}}
\put(20,1){\line(0,1){1}}
\put(21,1){\line(0,1){1}}
\put(18,0){\line(1,0){1}}
\put(17,1){\line(1,0){4}}
\put(17,2){\line(1,0){4}}

\put(17,1){\makebox(1,1){$2$}}
\put(18,0){\makebox(1,1){$4$}}
\put(18,1){\makebox(1,1){$3$}}
\put(19,1){\makebox(1,1){$4$}}
\put(20,1){\makebox(1,1){$6$}}
\end{picture}.
\end{center}
Nevertheless, we have

\begin{thm} \label{thm:CK}
If $w$ and $w^{\prime}$ are FPF-involution words such that $w\stackrel{\mathsf{CK}}{\sim}w^{\prime}$, then $P_{\mathsf{Sp}}(w)=P_{\mathsf{Sp}}(w^{\prime})$.
\end{thm}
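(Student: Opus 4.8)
The plan is to reduce Theorem~\ref{thm:CK} to a purely local statement about a single Coxeter-Knuth move and then verify that statement by a direct analysis of the bumping paths. Since $\stackrel{\mathsf{CK}}{\sim}$ is by definition the transitive closure of the elementary relations $(a+1)a(a+1)\sim a(a+1)a$, $bac\sim bca$, and $acb\sim cab$ (with $a<b<c$) applied to three consecutive letters, and since equality of insertion tableaux is transitive, it suffices to treat the case in which $w$ and $w'$ differ by a single such move. Writing $w=u\,x\,v$ and $w'=u\,y\,v$, where $x,y$ are the two sides of one relation and $u,v$ are the common prefix and suffix, I would first invoke the fact that the FPF-involution Coxeter-Knuth insertion is letter-by-letter deterministic: inserting the common prefix $u$ produces one and the same increasing shifted tableau $T:=P_{\mathsf{Sp}}(u)$ for both words. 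Hence it is enough to prove that $T\stackrel{\mathsf{Sp}}{\leftarrow}x$ and $T\stackrel{\mathsf{Sp}}{\leftarrow}y$ have the same shape and entries; once the insertion tableaux agree after the three-letter block, the remaining insertions of $v$ proceed identically and the two final insertion tableaux coincide. The recording tableaux may differ, but they are irrelevant to the statement.

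Thus the theorem reduces to the following local claim: for each of the three elementary relations and every increasing shifted tableau $T$ arising as an intermediate insertion tableau, one has $T\stackrel{\mathsf{Sp}}{\leftarrow}x=T\stackrel{\mathsf{Sp}}{\leftarrow}y$. For the two Knuth-type relations this simplifies further. In $bac\sim bca$ the first letter $b$ is common, so after inserting $b$ I need only compare the two insertion orders of the outer pair $a<c$; in $acb\sim cab$ the last letter $b$ is common, so I again compare inserting $a$ then $c$ against $c$ then $a$, now with $b$ inserted afterwards. In both cases the hypothesis $a<b<c$ furnishes a witness that pins down the relative landing positions of $a$ and $c$, and the content is that the two bumping routes of the outer pair are independent enough that their order is immaterial, exactly as in the classical Knuth case. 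The braid relation $(a+1)a(a+1)\sim a(a+1)a$, whose three letters are mutually adjacent in value and which shares neither a first nor a last letter between its two sides, must be handled on its own by a full three-letter analysis.

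I would carry out each case by tracking the successive bumped letters (the insertion path) step by step and showing that the two orders terminate with identical fillings. The main obstacle is the sheer number of sub-cases forced by the features that distinguish symplectic shifted Hecke insertion from ordinary RSK: the switching between row and column insertion (rule~(3) together with the main-diagonal clause), the parity rule~(2) governing the first entry of a row, and the Hecke ``idempotent'' situation $a=b$ in which a row or column is left unchanged. These interact most delicately in the braid relation, where the letters $a$ and $a+1$ can trigger the parity rule and reach the main diagonal, so I expect the braid case on or near the main diagonal to be the crux. Marberg's result that $\mathfrak{row}(T\stackrel{\mathsf{Sp}}{\leftarrow}a)\stackrel{\mathsf{Sp}}{\sim}\mathfrak{row}(T)\,a$ provides a useful structural constraint on the bumping paths, but it is only a necessary condition and does not by itself force the two orders to agree; indeed the example $26434\stackrel{\mathsf{Sp}}{\sim}42346$ with distinct insertion tableaux shows that $\stackrel{\mathsf{Sp}}{\sim}$-equivalence of reading words is far from sufficient, so the equality of entries must ultimately be pinned down by the explicit path analysis. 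Because this analysis is long, it is natural to relegate it to Appendix~A.
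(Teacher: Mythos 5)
Your reduction is exactly the one the paper uses: Theorem~\ref{thm:CK} is deduced from the three local statements that for an increasing shifted tableau $T$ (the insertion tableau of the common prefix) one has $T\stackrel{\mathsf{Sp}}{\leftarrow}bac=T\stackrel{\mathsf{Sp}}{\leftarrow}bca$, $T\stackrel{\mathsf{Sp}}{\leftarrow}acb=T\stackrel{\mathsf{Sp}}{\leftarrow}cab$, and $T\stackrel{\mathsf{Sp}}{\leftarrow}a(a+1)a=T\stackrel{\mathsf{Sp}}{\leftarrow}(a+1)a(a+1)$, which the paper proves in Appendix~A (Lemmas~\ref{lem:bac}, \ref{lem:acb}, \ref{lem:braid}) by exactly the bumping-path case analysis you describe, organized as an induction on the rows of $T$ with the single-box tableaux as base cases. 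Your plan is correct and follows essentially the same route; the only content you leave unexecuted is the lengthy case analysis itself, which is precisely the material the paper relegates to the appendix.
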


This is one of our first main results.
The proof of Theorem~\ref{thm:CK} is relegated to Appendix A.
Note that if $w\in \hat{\mathcal{R}}_{\mathsf{FPF}}(z)$, then a word $w^{\prime}$ such that $w\stackrel{\mathsf{CK}}{\sim}w^{\prime}$ is also an FPF-involution word; $w^{\prime}\in \hat{\mathcal{R}}_{\mathsf{FPF}}(z)$.
%\begin{thm}[\cite{PP,Mar}]
%Let $z\in \mathfrak{I}_{\infty}$. 
%Then, the orthogonal Hecke insertion gives a bijection $\mathrm{H}_{\mathsf{O}}: w \mapsto (P_{\mathsf{O}}(w),Q_{\mathsf{O}}(w))$, between $\mathcal{R}_{\mathsf{O}}$ and the set of pairs $(P,Q)$, where $P$ is an increasing shifted tableau with $\mathfrak{row}(P)\in \mathcal{R}_{\mathsf{O}}$ and $Q$ is a standard primed tableau with the same shape as $P$. 
%\end{thm}

The FPF-involution Coxeter-Knutk insertion is reversible.
The letter $u_{k}$ in $w=u_{1}u_{2}\cdots u_{l}\in \hat{\mathcal{R}}_{\mathsf{FPF}}(z)$ is extracted from $(P^{(k)},Q^{(k)})$ by the following algorithm:
Let $(i_{k},j_{k})$ be the position of the box containing the largest entry $x_{k}$ of $Q^{(k)}$.
Let $y_{k}$ be the entry at the position $(i_{k},j_{k})$ of $P^{(k)}$.
Remove the box of $Q^{(k)}$ at the position $(i_{k},j_{k})$.
Let $y_{k}$ be the entry of the box of $P^{(k)}$ at the position $(i_{k},j_{k})$.
Remove that box of $P^{(k)}$ and reverse insert $y_{k}$ into the row above if $x_{k}$ in $Q^{(k)}$ is unprimed and into the column to the left if $x_{k}$ is primed.

The rules for the reverse insertion of $y$ into a row or column of $P^{(k)}$ are as follows:

Let $x$ be the largest entry of $L$ with $x<y$.

\begin{enumerate}
\item If $L$ is a column and $x$ is the last entry of $L$ and $x\not\equiv y \pmod{2}$, then leave $L$ unchanged and insert $x-2$ to the row above.

\item If $x+1=y$, then leave $L$ unchanged and insert $x$ the row above if $L$ is a row or to the next column to the right if $L$ is a column.

\item In all other cases, replace $x$ by $y$ in $L$ and insert $x$ to the next column to the left if $L$ is a column or to the row above if $L$ is a row or $x$ was on the main diagonal.
\end{enumerate}

If we are in the first row of $P^{(k)}$ and the last step was a reverse row insertion or we are in the first column and the last step was a reverse column insertion, then $u_{k}$ is the letter bumped out.

\begin{thm}[\cite{Mar}]
Let $z\in \mathfrak{F}_{\infty}$. 
Then, the map $\mathrm{H}_{\mathsf{Sp}}$ gives a bijection between $\hat{\mathcal{R}}_{\mathsf{FPF}}(z)$ and the set of pairs $(P,Q)$, where $P$ is an increasing shifted tableau with $\mathfrak{row}(P)\in \hat{\mathcal{R}}_{\mathsf{FPF}}(z)$ and $Q$ is a standard primed tableau with the same shape as $P$. 
\end{thm}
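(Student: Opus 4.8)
The plan is to show that $\mathrm{H}_{\mathsf{Sp}}$ and the reverse insertion described above are mutually inverse, so that both are bijections of the asserted kind. Throughout I would work one letter at a time and glue the single-letter analysis together by induction on the word length.

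First I would check that $\mathrm{H}_{\mathsf{Sp}}$ is well defined. Writing $w = u_{1}u_{2}\cdots u_{l}\in \hat{\mathcal{R}}_{\mathsf{FPF}}(z)$ and inducting on $k$, I would verify that each single-letter insertion $P^{(k-1)}\stackrel{\mathsf{Sp}}{\leftarrow}u_{k}$ yields an increasing shifted tableau $P^{(k)}$: this is a routine check against the three insertion rules, using that in rule (3) the bumped entry is strictly smaller than the replaced one in the relevant row or column. For the recording tableau I would observe that each step adjoins a single box, at an outer corner, carrying the label $k$ or $k'$; since the labels $1,2,\ldots,l$ are strictly increasing and each occurs exactly once, rows and columns of $Q^{(k)}$ are weakly increasing and every integer appears once. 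The condition that there are no primes on the main diagonal follows because column insertion, once started when a row bump meets the diagonal, only ever proceeds to the right and therefore cannot terminate on the diagonal; hence a diagonal box is always adjoined by a row insertion and receives an unprimed label. Finally, $\mathfrak{row}(P_{\mathsf{Sp}}(w))\in \hat{\mathcal{R}}_{\mathsf{FPF}}(z)$ follows by induction from the single-insertion identity $\mathfrak{row}(T\stackrel{\mathsf{Sp}}{\leftarrow}a)\stackrel{\mathsf{Sp}}{\sim}\mathfrak{row}(T)a$ stated above, together with the fact that $\stackrel{\mathsf{Sp}}{\sim}$ preserves membership in $\hat{\mathcal{R}}_{\mathsf{FPF}}(z)$.

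Next I would establish that the reverse insertion inverts the forward one. The core is a local claim: inserting a single letter $a$ into an increasing shifted tableau $T$ and then reverse-inserting from the newly occupied box recovers $a$ and $T$, and symmetrically. The primed or unprimed status of the largest entry of $Q^{(k)}$ tells the reverse procedure whether the final forward step was a column or a row insertion, fixing the direction in which the reverse bumping travels. I would then match the forward rules with their reverse counterparts---forward rule (1) with reverse rule (2), the first-row parity rule (2) with the first-column parity rule (1), and the generic replacement rule (3) with reverse rule (3)---checking in each case that the entry $b$ selected as the smallest with $a\leq b$ on the way in is exactly the entry $x$ recovered as the largest with $x<y$ on the way out, and that the main-diagonal switch between row and column bumping is undone correctly. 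Iterating this local inverse from $k=l$ down to $k=1$ reconstructs $w$ from $(P,Q)$, which gives injectivity.

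For surjectivity I would start from an arbitrary pair $(P,Q)$ of the stated form, run reverse insertion to obtain a word $w=u_{1}\cdots u_{l}$, and verify $w\in \hat{\mathcal{R}}_{\mathsf{FPF}}(z)$. Since reverse insertion undoes the forward single-letter steps, at each stage the word read off stays $\stackrel{\mathsf{Sp}}{\sim}$-equivalent to the row reading word of the current tableau, whence $w\stackrel{\mathsf{Sp}}{\sim}\mathfrak{row}(P)$; as $\mathfrak{row}(P)\in \hat{\mathcal{R}}_{\mathsf{FPF}}(z)$ by hypothesis and this membership is $\stackrel{\mathsf{Sp}}{\sim}$-invariant, $w$ lies in $\hat{\mathcal{R}}_{\mathsf{FPF}}(z)$ and maps to $(P,Q)$. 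I expect the main obstacle to be precisely the local case-matching of the second step: the parity-dependent special behavior in the first row and first column and the asymmetric treatment of the main diagonal make it delicate to confirm that each forward insertion path is reversed by one and only one reverse path, with no ambiguity about where the bumping halts.
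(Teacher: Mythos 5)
This theorem is imported from Marberg \cite{Mar}: the paper states it without proof, so there is no in-paper argument to compare against. Your strategy---prove forward insertion is well defined, prove the reverse algorithm is well defined, and show the two single-letter steps are mutually inverse, then induct on the length of the word---is the standard architecture for such RSK-type bijections and is surely the shape of Marberg's own proof. As an outline it is reasonable, but two specific points you treat as routine are exactly where the content lies, and one of your justifications does not hold up as stated.

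First, you assert that ``each step adjoins a single box, at an outer corner.'' This is false for general symplectic Hecke words (which is why the recording tableau in the unrestricted theory is \emph{set-valued}); it holds here only because $w$ is reduced, and proving it requires combining $\mathfrak{row}(T\stackrel{\mathsf{Sp}}{\leftarrow}a)\stackrel{\mathsf{Sp}}{\sim}\mathfrak{row}(T)a$ with the fact that $\stackrel{\mathsf{Sp}}{\sim}$-equivalent FPF-involution words have equal length, so the box count must go up by one at every step. Your proposal never invokes reducedness for this purpose, yet without it the whole shape-chain structure of $Q$ collapses. Second, your argument that no primed entry lands on the main diagonal---``column insertion only ever proceeds to the right and therefore cannot terminate on the diagonal''---is a non sequitur: the diagonal cells $(j,j)$ also move rightward, and a column insertion into column $j$ that terminates by appending a cell at the bottom of that column would land at $(j,j)$ precisely when column $j$ already reaches row $j-1$. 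The claim is true, but the proof requires showing this configuration cannot occur, e.g.\ by bounding the letter entering column $j$ by the old entry at $(j-1,j-1)<T_{(j-1,j)}$, so that it cannot exceed every entry of a column reaching row $j-1$. Finally, for surjectivity you still owe an argument that the reverse algorithm is defined at every step on an \emph{arbitrary} admissible pair $(P,Q)$ (existence of the entry $x<y$, termination in the correct row or column), not merely on pairs already known to be in the image; as written, your injectivity argument only gives that $\mathrm{H}_{\mathsf{Sp}}$ is injective with a one-sided inverse on its image.
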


Given $w\in \hat{\mathcal{R}}_{\mathsf{FPF}}(z)$, an increasing factorization of $w$ is a factorization $w^{1}w^{2}\cdots w^{m}$ such that $w=w^{1}w^{2}\cdots w^{m}$, which is obtained by disregarding the grouping into blocks 
with $\left\vert w \right\vert =\left\vert w^{1} \right\vert + \cdots +\left\vert w^{m} \right\vert $ and each factor $w^{i}$ is strictly increasing.
In the sequel, we consider $w^{1}w^{2}\cdots w^{m}$ as the increasing factorization or the word $w=w^{1}w^{2}\cdots w^{m}$ interchangeably.
We denote by $\mathrm{RF}_{\mathsf{FPF}}^{m}(z)$ the set of all increasing factorizations with $m$ blocks of all FPF-involution words $\hat{\mathcal{R}}_{\mathsf{FPF}}(z)$.
We say that two factorizations $w^{1}w^{2}\cdots w^{m}$ and $\Tilde{w}^{1}\Tilde{w}^{2}\cdots \Tilde{w}^{m}$, both of which are elements of $\mathrm{RF}_{\mathsf{FPF}}^{m}(z)$, are equivalent if two words $w=w^{1}w^{2}\cdots w^{m}$ and $\Tilde{w}=\Tilde{w}^{1}\Tilde{w}^{2}\cdots \Tilde{w}^{m}$ are equivalent; $w\stackrel{\mathsf{Sp}}{\sim}w^{\prime}$.

For an increasing factorization $w^{1}w^{2}\cdots w^{m}$ of $w\in \hat{\mathcal{R}}_{\mathsf{FPF}}(z)$, we construct $Q_{\mathsf{Sp}}(w^{1}w^{2}\cdots w^{m})$ from $Q_{\mathsf{Sp}}(w)$ by the following rule: 
Let $x$ be the entry in $Q_{\mathsf{Sp}}(w)$, which appears when a letter in $w^{i}$ is inserted.
Then, $x$ is replaced by $i$ if $x$ is unprimed and by $i^{\prime}$ if $x$ is primed.
We apply this procedure for all entries in $Q_{\mathsf{Sp}}(w)$.

\begin{ex}
For an increasing factorization $(6)(24)(1)$ of the FPF-involution word $6241$ in Example~\ref{ex:insertion}, we have
\setlength{\unitlength}{12pt}

\begin{center}
\begin{picture}(10,2)
\put(0,0){\makebox(7,2){$Q_{\mathsf{Sp}}((6)(24)(1))=$}}

\put(7,1){\line(0,1){1}}
\put(8,0){\line(0,1){2}}
\put(9,0){\line(0,1){2}}
\put(10,1){\line(0,1){1}}
\put(8,0){\line(1,0){1}}
\put(7,1){\line(1,0){3}}
\put(7,2){\line(1,0){3}}

\put(7,1){\makebox(1,1){$1$}}
\put(8,0){\makebox(1,1){$2$}}
\put(8,1){\makebox(1,1){$2^{\prime}$}}
\put(9,1){\makebox(1,1){$3^{\prime}$}}
\end{picture}.
\end{center}

\end{ex}

The algorithm of constructing the pair of tableaux $(P_{\mathsf{Sp}}(w),Q_{\mathsf{Sp}}(w^{1}w^{2}\cdots w^{m}))$ from the increasing factorization $w^{1}w^{2}\cdots w^{m}$ of $w\in \hat{\mathcal{R}}_{\mathsf{FPF}}(z)$ is called the \emph{semistandard FPF-involution Coxeter-Knuth insertion} and denoted by the map $\mathrm{H}_{\mathsf{Sp}}^{\prime}:w^{1}w^{2}\cdots w^{m} \mapsto (P_{\mathsf{Sp}}(w),Q_{\mathsf{Sp}}(w^{1}w^{2}\cdots w^{m}))$.
Again, tableaux $P_{\mathsf{Sp}}(w)$ and $Q_{\mathsf{Sp}}(w^{1}w^{2}\cdots w^{m})$ are referred to as the insertion tableau and the recording tableau, respectively.

The semistandard FPF-involution Coxeter-Knuth insertion is also reversible.
To reverse the insertion, we first standardize the recording tableau and apply the reverse FPF-involution Coxeter-Knuth insertion.
The rest procedure is obvious.
The \emph{standardization} of a primed tableau $T$, denoted by $\mathfrak{st}(T)$, is given by the following procedure: 
We first replace all $1$s appearing $T$, read from left to right, by $1,2,\ldots,i$, where $i$ is the number of $1$s in $T$.
Then, replace all $2^{\prime}$s appearing $T$, read from top to bottom, by the primed numbers $(i+1)^{\prime},(i+2)^{\prime},\ldots,(i+j_{1})^{\prime}$, where $j_{1}$ is the number of $2^{\prime}$s in $T$.
Then replace all $2$s appearing $T$, read from left to right, by the unprimed numbers $(i+j_{1}+1),(i+j_{1}+2),\ldots,(i+j_{1}+j_{2})$, where $j_{2}$ is the number of $2$s in $T$, 
and so on.

\begin{ex} \label{ex:standardization}
Let $\lambda =(5,3,1)$.

\setlength{\unitlength}{12pt}

\begin{center}
\begin{picture}(16,3)
\put(0,1){\makebox(2,1){$T=$}}
\put(2,2){\line(0,1){1}}
\put(3,1){\line(0,1){2}}
\put(4,0){\line(0,1){3}}
\put(5,0){\line(0,1){3}}
\put(6,1){\line(0,1){2}}
\put(7,2){\line(0,1){1}}
\put(4,0){\line(1,0){1}}
\put(3,1){\line(1,0){3}}
\put(2,2){\line(1,0){5}}
\put(2,3){\line(1,0){5}}

\put(4,0){\makebox(1,1){$4$}}
\put(3,1){\makebox(1,1){$2$}}
\put(4,1){\makebox(1,1){$3^{\prime}$}}
\put(5,1){\makebox(1,1){$4$}}
\put(2,2){\makebox(1,1){$1$}}
\put(3,2){\makebox(1,1){$1$}}
\put(4,2){\makebox(1,1){$2^{\prime}$}}
\put(5,2){\makebox(1,1){$3^{\prime}$}}
\put(6,2){\makebox(1,1){$4^{\prime}$}}

\put(8,1){\makebox(3,1){$\mathfrak{st}(T)=$}}
\put(11,2){\line(0,1){1}}
\put(12,1){\line(0,1){2}}
\put(13,0){\line(0,1){3}}
\put(14,0){\line(0,1){3}}
\put(15,1){\line(0,1){2}}
\put(16,2){\line(0,1){1}}
\put(13,0){\line(1,0){1}}
\put(12,1){\line(1,0){3}}
\put(11,2){\line(1,0){5}}
\put(11,3){\line(1,0){5}}

\put(13,0){\makebox(1,1){$8$}}
\put(12,1){\makebox(1,1){$4$}}
\put(13,1){\makebox(1,1){$6^{\prime}$}}
\put(14,1){\makebox(1,1){$9$}}
\put(11,2){\makebox(1,1){$1$}}
\put(12,2){\makebox(1,1){$2$}}
\put(13,2){\makebox(1,1){$3^{\prime}$}}
\put(14,2){\makebox(1,1){$5^{\prime}$}}
\put(15,2){\makebox(1,1){$7^{\prime}$}}

\end{picture}.
\end{center}

\end{ex}

\begin{thm}[\cite{Mar}] \label{thm:bijection2}
Let $z\in \mathfrak{F}_{\infty}$. 
Then, the map $\mathrm{H}_{\mathsf{Sp}}^{\prime}$ gives a bijection between $\mathrm{RF}_{\mathsf{FPF}}^{m}(z)$ and the set of pairs $(P,Q)$, where $P$ is an increasing shifted tableau with $\mathfrak{row}(P)\in \hat{\mathcal{R}}_{\mathsf{FPF}}(z)$ and $Q$ is a primed tableau with the same shape as $P$. 
\end{thm}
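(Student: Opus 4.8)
The plan is to deduce the semistandard bijection from the standard one (the preceding theorem of Marberg) by means of the standardization map $\mathfrak{st}$. First I would recast an increasing factorization $w^{1}w^{2}\cdots w^{m}\in \mathrm{RF}_{\mathsf{FPF}}^{m}(z)$ as the pair consisting of the underlying word $w=w^{1}w^{2}\cdots w^{m}\in \hat{\mathcal{R}}_{\mathsf{FPF}}(z)$ together with the block-assignment $\beta$ sending the $k$-th letter of $w$ to the index of the block containing it. Since the blocks are consecutive and each $w^{i}$ is strictly increasing, $\beta$ is precisely a weakly increasing function with $w_{k}<w_{k+1}$ whenever $\beta(k)=\beta(k+1)$; conversely any such $\beta$ encodes a legal factorization. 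Under $\mathrm{H}_{\mathsf{Sp}}$ we have $w\mapsto(P_{\mathsf{Sp}}(w),Q_{\mathsf{Sp}}(w))$ with $Q_{\mathsf{Sp}}(w)$ a standard primed tableau, and by construction $Q_{\mathsf{Sp}}(w^{1}\cdots w^{m})$ is obtained from $Q_{\mathsf{Sp}}(w)$ by relabelling the cell carrying the step number $k$ with $\beta(k)$, keeping the prime attached at column terminations. Thus $\mathrm{H}_{\mathsf{Sp}}^{\prime}$ factors as $\mathrm{H}_{\mathsf{Sp}}$ followed by this relabelling, and everything reduces to understanding how the relabelling interacts with $\mathfrak{st}$.

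The \emph{key step} is the compatibility identity $\mathfrak{st}\bigl(Q_{\mathsf{Sp}}(w^{1}\cdots w^{m})\bigr)=Q_{\mathsf{Sp}}(w)$. Granting this, well-definedness and bijectivity both follow quickly. For well-definedness I would check that $Q_{\mathsf{Sp}}(w^{1}\cdots w^{m})$ is a primed tableau: weak monotonicity along rows and columns is inherited from $Q_{\mathsf{Sp}}(w)$ because $\beta$ is weakly increasing in the step number, while the conditions that each row carry at most one $i^{\prime}$, each column at most one $i$, and the main diagonal stay unprimed are read off from the cellwise structure of the block of steps indexed by $i$. For the inverse I would start from a pair $(P,Q)$ with $P$ an increasing shifted tableau, $\mathfrak{row}(P)\in\hat{\mathcal{R}}_{\mathsf{FPF}}(z)$, and $Q$ a primed tableau of the same shape; apply $\mathrm{H}_{\mathsf{Sp}}^{-1}$ to $(P,\mathfrak{st}(Q))$ to recover a unique word $w$, and then read the block index of the $k$-th letter of $w$ as the unprimed value of $Q$ in the cell where $\mathfrak{st}(Q)$ carries $k$. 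The compatibility identity guarantees that these two passages are mutually inverse, so $\mathrm{H}_{\mathsf{Sp}}^{\prime}$ is a bijection onto the asserted target.

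The compatibility identity itself rests on a local property of the insertion of one block: when a strictly increasing sequence is inserted letter by letter into an increasing shifted tableau, the recording cells it creates have, in order of insertion, all column terminations preceding all row terminations, with the column-terminated cells occupying distinct rows read top-to-bottom and the row-terminated cells occupying distinct columns read left-to-right. This is exactly the order in which $\mathfrak{st}$ processes the cells carrying a fixed value $i$ (first the primed ones top-to-bottom, then the unprimed ones left-to-right), so the two labellings agree step by step within each block, and since the blocks occupy consecutive step ranges and consecutive values the agreement is global. I expect this local claim to be the main obstacle: it requires tracking how the bump paths of an increasing sequence propagate in the symplectic shifted Hecke insertion — intuitively the smaller letters of the block dig into the existing tableau and are deflected into column mode at the main diagonal (yielding column terminations) while the larger letters extend rows to the right (yielding row terminations) — and making this precise demands a careful induction on the insertion steps, analogous to and relying on the bump-path comparisons already developed for Theorem~\ref{thm:CK}.
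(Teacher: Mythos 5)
The paper does not prove this statement at all: it is quoted verbatim from Marberg's paper \cite{Mar} (as is the standard-case bijection it would be deduced from), so there is no in-paper argument to compare against. Your reduction is nevertheless the natural and, as far as I can tell, the intended one: encode a factorization as the word $w$ plus a weakly increasing block-assignment $\beta$, observe that $\mathrm{H}_{\mathsf{Sp}}^{\prime}$ is $\mathrm{H}_{\mathsf{Sp}}$ followed by the relabelling $k\mapsto\beta(k)$, and reduce everything to the identity $\mathfrak{st}\bigl(Q_{\mathsf{Sp}}(w^{1}\cdots w^{m})\bigr)=Q_{\mathsf{Sp}}(w)$. This is consistent with how the paper itself describes the inverse of $\mathrm{H}_{\mathsf{Sp}}^{\prime}$ (standardize $Q$, then run the reverse standard insertion), and it checks out on the worked example $(6)(24)(1)$, where the block $(2,4)$ produces a column termination at $(1,2)$ followed by a row termination at $(2,2)$, exactly the order in which $\mathfrak{st}$ processes $2^{\prime}$ then $2$.

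The genuine gap is that the ``local property'' carrying all the weight is only asserted. The claim that, within one increasing block, every column termination precedes every row termination, that the column-terminated cells appear in distinct rows in top-to-bottom order of insertion, and that the row-terminated cells appear in distinct columns left to right, is exactly equivalent to the compatibility identity, and it is also what makes $Q_{\mathsf{Sp}}(w^{1}\cdots w^{m})$ a primed tableau in the first place; so essentially the entire content of the theorem has been pushed into an unproved lemma. Proving it requires a bumping-path comparison specific to the symplectic rules (in particular rule (2), where an insertion can skip a letter and re-enter with $a+2$, and the deflection into column mode at the main diagonal), showing that the path of $u_{k+1}$ stays strictly to the right of the path of $u_{k}$ in row mode and strictly below it once both are in column mode. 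You correctly flag this as the main obstacle, but note that it is not ``analogous to'' the analysis behind Theorem~\ref{thm:CK} so much as an independent statement that must be extracted from, or re-proved along the lines of, \cite{Mar}; as written, your argument establishes the theorem only modulo that lemma.
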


\section{Queer supercrystal structure for increasing factorizations of FPF-involution words} \label{sec:factorization}

It is established that $\mathrm{PT}_{m}(\lambda)$ is a $\mathfrak{q}(m)$-crystal~\cite{AO1,AO2,Hir}.
The $\mathfrak{gl}(m)$-crystal structure of $\mathrm{PT}_{m}(\lambda)$ together with (even) Kashiwara operators $\Tilde{e}_{i}^{P}$ and $\Tilde{f}_{i}^{P}$ ($i=1,\ldots,m-1$) is given by Hawkes, Paramonov, and Schilling~\cite{HPS} and by Assaf and Oguz~\cite{AO1,AO2}.
The actual algorithm is quite involved.
For details we refer the reader to \cite{HPS} and \cite{AO1,AO2}.
The odd Kashiwara operators $\Tilde{f}_{\Bar{1}}^{P}$ and $\Tilde{e}_{\Bar{1}}^{P}$ on $\mathrm{PT}_{m}(\lambda)$ are given by the following two lemmas~\cite{AO1,AO2,Hir}.

\begin{lem} \label{lem:eP}
For $T\in\mathrm{PT}_{m}(\lambda)$, the odd Kashiwara operator $\Tilde{e}_{\Bar{1}}^{P}$ on $T$ is given by the following rule: 

If $T_{(1,1)}=2$, change $T_{(1,1)}$ to $1$.
If $T_{(1,i)}=2^{\prime}$ ($i\geq 2$), change $T_{(1,i)}$ to $1$.
Otherwise, $\Tilde{e}_{\Bar{1}}^{P}T=\boldsymbol{0}$.
\end{lem}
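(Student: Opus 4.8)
The plan is to evaluate $\Tilde{e}_{\bar 1}^{P}$ directly from its definition as a queer crystal operator. Recall that the $\mathfrak{q}(m)$-crystal structure on $\mathrm{PT}_{m}(\lambda)$ of \cite{HPS,AO1,AO2,Hir} is built from a fixed reading word $w(T)\in B_{m}(\square)^{\otimes N}$, where a cell with entry $i^{\prime}$ or $i$ contributes the box $\boxed{i}$ and $\mathrm{wt}$ ignores primes, and $\Tilde{e}_{\bar 1}^{P}T$ is the unique tableau with $w(\Tilde{e}_{\bar 1}^{P}T)=\Tilde{e}_{\bar 1}\,w(T)$, the latter computed by the queer tensor product rule of Section~\ref{sec:queer}. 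So it suffices to evaluate that rule and to read the result back off as a cell of $T$.

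First I would establish a structural fact that localizes every letter relevant to $\Tilde{e}_{\bar 1}$: no entry of $T\in\mathrm{PT}_{m}(\lambda)$ equals $1^{\prime}$, and every entry equal to $2^{\prime}$ lies in the first row. For the first claim the corner $T_{(1,1)}$ is unprimed, hence $\geq 1>1^{\prime}$, so weak increase along row $1$ and up each column (whose top cell lies in row $1$) rules out $1^{\prime}$ everywhere. For the second, a $2^{\prime}$ in row $i\geq 2$ would force the diagonal entry $T_{(i,i)}$, being unprimed and $\leq 2^{\prime}$, to equal $1$, which makes all of column $i$ equal to $1$ and produces two unprimed $1$'s in that column, contradicting the one-unprimed-entry-per-column condition. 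Since $\Tilde{e}_{\bar 1}$ changes weight by $\alpha_{1}=\epsilon_{1}-\epsilon_{2}$ and both the tensor rule and the single-box action involve only the $1$- and $2$-content, the only boxes of $w(T)$ that matter are the unprimed $1$'s and $2$'s together with the at most one first-row $2^{\prime}$.

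Next I would iterate the queer tensor product rule: on $b_{1}\otimes\cdots\otimes b_{N}$ the operator $\Tilde{e}_{\bar 1}$ acts on the leftmost factor equal to $\boxed{1}$ or $\boxed{2}$, sending $\boxed{2}\mapsto\boxed{1}$, annihilating it if that factor is $\boxed{1}$, and returning $\boldsymbol{0}$ if no such factor occurs. It remains to identify this leftmost low box. Using the reading order of \cite{HPS} --- in which the first-row $2^{\prime}$ precedes all unprimed entries among the low boxes and the corner $T_{(1,1)}$ precedes every other unprimed $1$ or $2$ --- together with the structural fact (no $1^{\prime}$, all $2^{\prime}$ in row $1$), the leftmost $\boxed{1}$ or $\boxed{2}$ is: the corner $T_{(1,1)}$ when $T_{(1,1)}=2$; the unique first-row $2^{\prime}$ when $T_{(1,1)}=1$ and such a $2^{\prime}$ exists; and otherwise the unprimed corner $T_{(1,1)}=1$, which is annihilated, or no low box at all. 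Applying $\boxed{2}\mapsto\boxed{1}$ in the first two cases and annihilation in the last reproduces verbatim the three clauses of the statement.

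Finally I would check that the output is again a primed tableau and that the modified entry is rendered unprimed. Since no $1^{\prime}$ can occur, the raised box $\boxed{1}$ must be an unprimed $1$, as claimed; and a direct check of the row, column, and diagonal conditions at the changed cell --- using that $T_{(1,1)}=1$ whenever a first-row $2^{\prime}$ is present and that column $1$ is a single cell --- shows $\Tilde{e}_{\bar 1}^{P}T\in\mathrm{PT}_{m}(\lambda)$ with weight $\mathrm{wt}(T)+\alpha_{1}$. The main obstacle is the localization in the previous paragraph: one must pin down the reading-order convention precisely enough to be certain that an unprimed $2$ in the first row is never the raised box and that a first-row $2^{\prime}$ is raised in preference to the corner $1$. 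The structural fact that all $2^{\prime}$'s sit in the first row and that no $1^{\prime}$ ever appears is exactly what collapses the tensor-rule scan to an inspection of $T_{(1,1)}$ and the first row, and so is the key that makes the argument go through.
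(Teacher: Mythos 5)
The paper does not actually prove this lemma: it is quoted from \cite{AO1,AO2,Hir}, where the odd operators on $\mathrm{PT}_{m}(\lambda)$ are constructed (in \cite{Hir} by transporting the queer structure through a bijection with signed unimodal factorizations, in \cite{AO1,AO2} by direct verification of the queer axioms). So there is no internal proof to compare against, and your argument has to stand on its own.

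The combinatorial core of what you wrote is correct and is indeed the essential content: since $T_{(1,1)}$ is unprimed and is the minimum entry of the tableau, no entry can equal $1^{\prime}$; and a $2^{\prime}$ in row $i\geq 2$ would force $T_{(i,i)}=1$ and hence $i$ unprimed $1$'s in column $i$, so every $2^{\prime}$ (there is at most one) sits in the first row. Together with the single-box action of $\tilde{e}_{\bar{1}}$ this localizes the computation to $T_{(1,1)}$ and the unique first-row $2^{\prime}$. The gap is in the reduction to a reading word. First, nothing in this paper (or in \cite{HPS}, whose even operators are given by an involved algorithm rather than by a naive embedding into $B_{m}(\square)^{\otimes N}$) licenses the premise that $\tilde{e}_{\bar{1}}^{P}$ is computed by applying the tensor-product rule of Section~\ref{sec:queer} to a reading word of $T$. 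Second, even granting that premise, the two precedence properties you invoke --- that the first-row $2^{\prime}$ is read before the corner, and that the corner is read before every other unprimed $1$ or $2$ --- are the lemma in disguise: for the one-row tableaux with entries $1,2$ and $1,2^{\prime}$ the lemma forces exactly these orderings, so inferring them from ``the reading order of \cite{HPS}'' without exhibiting and checking that order is circular at the decisive step. A complete proof must either fix a concrete reading word and prove that the resulting operator preserves $\mathrm{PT}_{m}(\lambda)$ and satisfies the queer axioms, or argue as the cited references do (transport through an established bijection, or verify the local queer axioms of \cite{AO2,GHPS}). As written, the step that singles out which box is raised is assumed rather than proved.
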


\begin{lem} \label{lem:fP}
For $T\in\mathrm{PT}_{m}(\lambda)$, the odd Kashiwara operator $\Tilde{f}_{\Bar{1}}^{P}$ on $T$ is given by the following rule:

If the first row of $T$ does not contain letters $1$, then $\Tilde{f}_{\Bar{1}}^{P}T=\boldsymbol{0}$.
Let $(1,i)$ be the position of the rightmost $1$ in the first row.
\begin{enumerate}
\item $i=1$.
If $T_{(1,2)}=2^{\prime}$, then $\Tilde{f}_{\Bar{1}}^{P}T=\boldsymbol{0}$.
Otherwise, change $T_{(1,1)}=1$ to $2$.
\item $i\geq 2$.
If $T_{(1,i+1)}=2^{\prime}$, then $\Tilde{f}_{\Bar{1}}^{P}T=\boldsymbol{0}$.
Otherwise, change $T_{(1,i)}=1$ to $2^{\prime}$.
\end{enumerate}
\end{lem}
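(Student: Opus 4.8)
The plan is to compute $\tilde f_{\bar 1}^{P}$ directly from its definition as the odd Kashiwara operator of the $\mathfrak{q}(m)$-crystal structure on $\mathrm{PT}_{m}(\lambda)$, whose existence is already guaranteed by \cite{AO1,AO2,Hir}; concretely, one embeds a primed tableau into a tensor power of the vector crystal $B_{m}(\square)$ via its reading word and applies the tensor product rule for $\tilde f_{\bar 1}$ recorded after Definition~\ref{df:queer} in the anti-Kashiwara convention. Since $\tilde f_{\bar 1}$ changes the weight by $-\alpha_{1}=-\epsilon_{1}+\epsilon_{2}$ (Definition~\ref{df:queer}(3)) and on a single box it is supported only on the arrow from $1$ to $2$ in the crystal graph of $B_{m}(\square)$, the first step is to observe that $\tilde f_{\bar 1}^{P}$ depends only on the subword of entries of weight $\epsilon_{1}$ or $\epsilon_{2}$, namely the letters $1$, $2'$ and $2$. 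Every other letter has $\langle\mathrm{wt},k_{1}\rangle=\langle\mathrm{wt},k_{2}\rangle=0$, so by the tensor rule it is transparent to $\tilde f_{\bar 1}$ and may be discarded.

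Next I would pin down the positions of these letters from the defining conditions of a primed tableau. Because the $(1,1)$ entry lies on the main diagonal it is unprimed and equal to the smallest letter present, so $1'$ never occurs; moreover condition~(3) (at most one unprimed $i$ per column) forces every $1$ to lie in the first row, where the $1$'s consequently occupy an initial segment $(1,1),\dots,(1,k)$. This isolates the first row as the only place where $\tilde f_{\bar 1}^{P}$ can act nontrivially and reduces the iterated tensor product rule to a bracketing of the block of $1$'s against the neighboring entries of weight $\epsilon_{2}$, i.e.\ the letters $2$ and $2'$.

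I would then apply the reduced rule and read off the cases. Primed and unprimed entries enter the bracketing with opposite orientation, and the surviving unbracketed $1$ is precisely the \emph{rightmost} $1$ of the first row, at some position $(1,i)$. Converting that $1$ into a letter of weight $\epsilon_{2}$ yields $2$ when $i=1$, since the diagonal forbids a prime there, and $2'$ when $i\ge 2$, matching the two cases of the statement; the obstruction $\tilde f_{\bar 1}^{P}T=\boldsymbol{0}$ arises exactly when the $1$ is already bracketed by a $2'$ immediately following it, that is $T_{(1,2)}=2'$ for $i=1$ and $T_{(1,i+1)}=2'$ for $i\ge 2$, while the absence of any $1$ in the first row leaves nothing to convert and also gives $\boldsymbol{0}$. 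A final consistency check uses the bijection $\tilde f_{\bar 1}^{P}T=T'\iff\tilde e_{\bar 1}^{P}T'=T$ of Definition~\ref{df:queer}(4) against Lemma~\ref{lem:eP}.

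The main obstacle is the faithful translation of this prime-sensitive signature rule into tableau language: one must verify that an unprimed $2$ immediately to the right of the rightmost $1$ does \emph{not} block the operator whereas a $2'$ does, and that the $\epsilon_{2}$-letters lying in lower rows never interfere. This requires tracking the precise reading-word order and the anti-Kashiwara convention through the rows below the first, a bookkeeping step that is routine in spirit but delicate, which is why the argument ultimately leans on the explicit description of the $\mathfrak{gl}(m)$-structure established in \cite{HPS,AO1,AO2}.
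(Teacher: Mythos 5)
The paper does not actually prove Lemma~\ref{lem:fP}: it is imported verbatim from \cite{AO1,AO2,Hir}, so there is no internal proof to compare your argument against. Judged on its own terms, your sketch has a genuine gap at its central step. You propose to read the operator off from the tensor product rule after ``embedding a primed tableau into a tensor power of $B_{m}(\square)$ via its reading word,'' but $B_{m}(\square)$ has no primed letters: both $2$ and $2^{\prime}$ map to the same basis vector of weight $\epsilon_{2}$, so the image of such an embedding cannot distinguish the two situations the lemma separates --- a $2^{\prime}$ immediately to the right of the rightmost $1$ (which makes $\Tilde{f}_{\Bar{1}}^{P}T=\boldsymbol{0}$) versus an unprimed $2$ there (which does not) --- nor can it tell you whether the new letter should be $2$ (on the diagonal) or $2^{\prime}$ (off it). The entire case analysis of the statement lives in data that your proposed embedding forgets, so the tensor rule alone cannot yield it; one needs the actual construction of \cite{HPS,Hir} (an unprimed companion/doubling of the tableau together with a verification of the axioms of Definition~\ref{df:queer}), which you gesture at but never supply.

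A second, related problem is that you describe the computation as a ``bracketing'' of the $1$'s against the $2$'s and $2^{\prime}$'s, with primed and unprimed letters entering ``with opposite orientation.'' The odd operator has no bracketing rule: by the formula following Definition~\ref{df:queer}, $\Tilde{f}_{\Bar{1}}$ acts on the first tensor factor, in the chosen reading order, whose weight is not annihilated by both $k_{1}$ and $k_{2}$; there is no pairing or cancellation. Hence identifying the target as the rightmost $1$ of the first row depends entirely on which reading word is fixed --- for instance a $2$ at position $(2,2)$ also has weight $\epsilon_{2}$ and, in a bottom-up reading, would precede every first-row $1$ and absorb the operator, giving $\boldsymbol{0}$ instead of the claimed answer. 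You flag this bookkeeping as ``routine but delicate'' and defer it, but it is precisely where the proof lives. The solid parts of your argument are the preliminary observations (no $1^{\prime}$ can occur; every $1$ sits in an initial segment of the first row) and the consistency check against Lemma~\ref{lem:eP} via Definition~\ref{df:queer}(4); the latter shows the rule is self-consistent but does not establish that it is the odd Kashiwara operator of $B_{m}(\lambda)$.
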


Since $\mathrm{H}_{\mathsf{Sp}}^{\prime}$ is a bijection (Theorem~\ref{thm:bijection2}), we can translate the $\mathfrak{gl}(m)$-crystal structure of $\mathrm{PT}_{m}(\lambda)$ to that of $\mathrm{RF}^{m}_{\mathsf{FPF}}(z)$.
That is, Kashiwara operators $\Tilde{e}_{i}^{F}$ and $\Tilde{f}_{i}^{F}$ on $\mathrm{RF}^{m}_{\mathsf{FPF}}(z)$ are given by

\begin{equation} \label{eq:crystaliso}
\Tilde{e}_{i} =(\mathrm{H}_{\mathsf{Sp}}^{\prime})^{-1}\circ (\mathrm{id},\Tilde{e}_{i}^{P})\circ \mathrm{H}_{\mathsf{Sp}}^{\prime}, \quad
\Tilde{f}_{i} =(\mathrm{H}_{\mathsf{Sp}}^{\prime})^{-1}\circ (\mathrm{id},\Tilde{f}_{i}^{P})\circ \mathrm{H}_{\mathsf{Sp}}^{\prime},
\end{equation}
for a fixed insertion tableau $P_{\mathsf{Sp}}(w)$ ($i=1,\ldots,m-1,\Bar{1}$).

First, we construct odd Kashiwara operators.
Let $w^{1}w^{2}\cdots w^{m}\in \mathrm{RF}^{m}_{\mathsf{FPF}}(z)$.
Let us denote by $\mathrm{cont}(w^{i})$ the set of letters appearing in $w^{i}$. 

\begin{lem} \label{lem:f0F1}
Let $w^{1}w^{2}\cdots w^{m}\in \mathrm{RF}_{\mathsf{FPF}}^{m}(z)$ be an increasing factorization of $w\in \hat{\mathcal{R}}_{\mathsf{FPF}}(z)$ for $z\in \mathfrak{F}_{\infty}$.
The action of the odd Kashiwara operator $\Tilde{f}_{\Bar{1}}^{F}$ on $w^{1}w^{2}\cdots w^{m}$ is given by the following rule:

$\Tilde{f}_{\Bar{1}}^{F}$ always changes the first two factors if $\Tilde{f}_{\Bar{1}}^{F}(w^{1}w^{2}\cdots w^{m})\neq \boldsymbol{0}$. 
Suppose that $\left\vert w^{2}\right\vert\neq 0$.

\begin{enumerate}
\item $\left\vert w^{1} \right\vert =0$.

$\Tilde{f}_{\Bar{1}}^{F}(w^{1}w^{2}\cdots w^{m})= \boldsymbol{0}$.

\item $\left\vert w^{1} \right\vert =1$.

If $w^{1}=u_{1}<\min (\mathrm{cont}(w^{2}))$, then $\Tilde{f}_{\Bar{1}}^{F}(w^{1}w^{2}\cdots w^{m})=\Tilde{w}^{1}\Tilde{w}^{2}\cdots w^{m}$, where $\Tilde{w}^{1}=()$ and $\mathrm{cont}(\Tilde{w}^{2})=\mathrm{cont}(w^{2})\cup \{u_{1}\}$.
Otherwise, $\Tilde{f}_{\Bar{1}}^{F}(w^{1}w^{2}\cdots w^{m})= \boldsymbol{0}$.

\item $\left\vert w^{1} \right\vert \geq 2$.

Let us write $w^{1}=u_{1}u_{2}\cdots$.
If $u_{2}>u_{1}+1$ and $u_{1}<\min (\mathrm{cont}(w^{2}))$, then $\Tilde{f}_{\Bar{1}}^{F}(w^{1}w^{2}\cdots w^{m})=\Tilde{w}^{1}\Tilde{w}^{2}\cdots w^{m}$, where $\mathrm{cont}(\Tilde{w}^{1})=\mathrm{cont}(w^{1})\backslash \{u_{1}\}$ and $\mathrm{cont}(\Tilde{w}^{2})=\mathrm{cont}(w^{2})\cup \{u_{1}\}$.
If $u_{2}=u_{1}+1$ and $u_{1}<\min (\mathrm{cont}(w^{2}))$, then $\Tilde{f}_{\Bar{1}}^{F}(w^{1}w^{2}\cdots w^{m})=\Tilde{w}^{1}\Tilde{w}^{2}\cdots w^{m}$, where $\mathrm{cont}(\Tilde{w}^{1})=\mathrm{cont}(w^{1})\backslash \{u_{1}+1\}$ and $\mathrm{cont}(\Tilde{w}^{2})=\mathrm{cont}(w^{2})\cup \{u_{1}-1\}$.
Otherwise, $\Tilde{f}_{\Bar{1}}^{F}(w^{1}w^{2}\cdots w^{m})= \boldsymbol{0}$.
\end{enumerate}

If $\left\vert w^{2} \right\vert =0$, then just drop the condition $u_{1}< \min (\mathrm{cont}(w^{2}))$. 
\end{lem}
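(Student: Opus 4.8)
The plan is to unwind the definition \eqref{eq:crystaliso} of $\Tilde{f}_{\Bar{1}}^{F}$: compute $(P,Q)=\mathrm{H}_{\mathsf{Sp}}^{\prime}(w^{1}w^{2}\cdots w^{m})$, apply the combinatorial rule of Lemma~\ref{lem:fP} to $Q$ with $P$ held fixed, and recover the factorization $(\mathrm{H}_{\mathsf{Sp}}^{\prime})^{-1}(P,\Tilde{f}_{\Bar{1}}^{P}Q)$ by reverse insertion. The first ingredient is a structural fact about the recording tableau. Writing $p=\left\vert w^{1}\right\vert$ and $w^{1}=u_{1}u_{2}\cdots u_{p}$, the strict increase of $w^{1}$ means that inserting it into the empty tableau merely appends its letters, so after the insertion of $w^{1}$ the tableau is the single row $[u_{1},\ldots,u_{p}]$ and the newly created boxes carry the label $1$ at positions $(1,1),\ldots,(1,p)$. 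Every later insertion (the letters of $w^{2},\ldots,w^{m}$) only adds boxes with labels $\geq 2$ and never rewrites an existing recording entry; hence in the final $Q$ the label-$1$ boxes of the first row are exactly $(1,1),\ldots,(1,p)$ and the rightmost $1$ sits at $(1,p)$. This already disposes of case (1): when $p=0$ the first row of $Q$ contains no $1$, so $\Tilde{f}_{\Bar{1}}^{P}Q=\boldsymbol{0}$.

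Next I would reduce to two factors. The operator $\Tilde{f}_{\Bar{1}}^{P}$ relabels only the single box $(1,p)$ (from $1$ to $2$ when $p=1$, to $2^{\prime}$ when $p\geq 2$), so the total number of boxes with label $\leq 2$ is unchanged. Consequently the standardizations of $Q$ and of $\Tilde{f}_{\Bar{1}}^{P}Q$ agree verbatim on every box of label $\geq 3$, the reverse insertion of the factors $w^{3},\ldots,w^{m}$ proceeds identically in both cases, and peeling them off reduces the statement to $m=2$ with $P_{\mathsf{Sp}}(w^{1}w^{2})$ in place of $P$. It then suffices to follow the first row of the insertion tableau under the insertion of $w^{2}=v_{1}v_{2}\cdots$, whose smallest letter is $v_{1}=\min(\mathrm{cont}(w^{2}))$, and in particular to determine the entry of $Q$ immediately right of $(1,p)$, namely $(1,p+1)$.

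The non-vanishing condition is governed by whether $(1,p+1)$ carries a primed label. Inserting $v_{1}$ into $[u_{1},\ldots,u_{p}]$, I claim a primed box appears at $(1,p+1)$ precisely when $u_{1}\geq v_{1}$. Indeed, if $v_{1}<u_{1}$ then $v_{1}$ displaces the diagonal entry $u_{1}$, and since a bump out of a main-diagonal cell converts to column insertion (the third insertion rule), the displaced value cascades rightward through the columns and terminates as a column insertion at $(1,p+1)$, recording $2^{\prime}$; the opposite-parity variant triggers the second insertion rule and terminates the same way. If instead $u_{1}<v_{1}$, then either $v_{1}>u_{p}$, so $v_{1}$ is appended at $(1,p+1)$ by a row insertion recording an unprimed $2$, or $u_{1}<v_{1}<u_{p}$, so $v_{1}$ bumps an interior entry and no primed box is ever produced at $(1,p+1)$. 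Comparing with Lemma~\ref{lem:fP} yields $\Tilde{f}_{\Bar{1}}^{F}(w^{1}w^{2}\cdots w^{m})=\boldsymbol{0}$ exactly when $u_{1}\geq\min(\mathrm{cont}(w^{2}))$, the condition being vacuous, hence dropped, when $\left\vert w^{2}\right\vert=0$; this is the ``otherwise'' branch of cases (2) and (3).

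For the non-vanishing cases I would identify $\Tilde{w}^{1},\Tilde{w}^{2}$ by reverse inserting the box $(1,p)$ with its new label. When $p=1$ the box $(1,1)$ becomes a plain $2$ and $u_{1}$ is simply reassigned to the second factor, giving $\Tilde{w}^{1}=()$ and $\mathrm{cont}(\Tilde{w}^{2})=\mathrm{cont}(w^{2})\cup\{u_{1}\}$. When $p\geq 2$ the box is reprimed and reverse column insertion is triggered, and the outcome splits on the first gap of $w^{1}$: if $u_{2}>u_{1}+1$ the letter $u_{1}$ commutes past $u_{2},\ldots,u_{p}$ and is moved to $w^{2}$ unchanged, whereas if $u_{2}=u_{1}+1$ the reverse cascade meets the parity/main-diagonal rules together with the FPF relation $i_{1}(i_{1}-1)\cdots\stackrel{\mathsf{Sp}}{\sim}i_{1}(i_{1}+1)\cdots$, so that $u_{1}+1$ is deleted from $w^{1}$ and $u_{1}-1$ is inserted into $w^{2}$. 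The main obstacle I expect is exactly this ``tight'' sub-case $u_{2}=u_{1}+1$ (and, more generally, the interior-bump situation $u_{1}<v_{1}<u_{p}$): here the underlying FPF-involution word genuinely changes rather than being merely regrouped, so Theorem~\ref{thm:CK} cannot be invoked and the reverse-insertion cascade must be traced step by step, carefully bookkeeping the second insertion rule and the main-diagonal switch between row and column insertion. Confirming that the cascade terminates precisely as the letter $u_{1}-1$ entering $w^{2}$, and that the resulting pair still lies in the image of $\mathrm{H}_{\mathsf{Sp}}^{\prime}$ so that Theorem~\ref{thm:bijection2} applies, is the delicate computational heart of the proof.
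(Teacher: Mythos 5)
Your overall strategy coincides with the paper's: transport $\Tilde{f}_{\Bar{1}}^{P}$ through the bijection $\mathrm{H}_{\mathsf{Sp}}^{\prime}$, note that the label-$1$ boxes of $Q$ are exactly $(1,1),\ldots,(1,p)$, and decide vanishing by whether a $2^{\prime}$ appears at $(1,p+1)$. There is, however, one concrete hole in the vanishing analysis. Your biconditional ``a primed box appears at $(1,p+1)$ precisely when $u_{1}\geq v_{1}$'' is only argued for $u_{1}>v_{1}$. If $u_{1}=v_{1}$, the first insertion rule applies ($a=b$), the letter $u_{1}+1$ is row-inserted into the empty second row, and \emph{no} primed box is created at $(1,p+1)$; later letters of $w^{2}$ can then fill $(1,p+1)$ with an unprimed $2$, so $\Tilde{f}_{\Bar{1}}^{P}Q\neq\boldsymbol{0}$ and your ``otherwise'' branch would fail. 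The paper closes this by showing $u_{1}=v_{1}$ is impossible: if $u_{2}>u_{1}+1$ then $w^{1}w^{2}\stackrel{\mathsf{Sp}}{\sim}u_{1}v_{1}u_{2}\cdots$, which has equal adjacent letters, and if $u_{2}=u_{1}+1$ then $w^{1}w^{2}\stackrel{\mathsf{Sp}}{\sim}(u_{1}+1)u_{1}(u_{1}+1)\cdots$, which begins with the odd letter $u_{1}+1$. You need this impossibility argument (or an equivalent one) for cases (2) and (3) to be complete.

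On the part you defer as ``the delicate computational heart,'' the paper does not trace the reverse cascade at all. Instead it exhibits the candidate preimage directly --- $(u_{2}\cdots u_{l})(u_{1}v_{1}\cdots)$ when $u_{2}>u_{1}+1$, and $(u_{1}u_{3}\cdots u_{l})((u_{1}-1)v_{1}\cdots)$ when $u_{2}=u_{1}+1$ --- and verifies by a short \emph{forward} insertion that it maps to $(P_{\mathsf{Sp}}(w),\Tilde{f}_{\Bar{1}}^{P}Q)$: in the tight case, inserting $u_{1}-1$ into the row $[u_{1},u_{3},\ldots,u_{l}]$ triggers insertion rule (2) (parity mismatch at the first entry), sending $u_{1}+1$ rightward as a column cascade that restores the row to $[u_{1},u_{2},\ldots,u_{l}]$ and terminates at $(1,l)$ with a $2^{\prime}$, while $u_{1}u_{3}\cdots u_{l}(u_{1}-1)v_{1}\stackrel{\mathsf{Sp}}{\sim}u_{1}(u_{1}+1)u_{3}\cdots u_{l}v_{1}$ confirms it factors the right equivalence class. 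Injectivity of $\mathrm{H}_{\mathsf{Sp}}^{\prime}$ (Theorem~\ref{thm:bijection2}) then identifies the preimage with no bookkeeping of the reverse rules. I recommend closing your gap this way rather than by the reverse algorithm, whose stated rules are considerably more delicate to apply in the first row and first column.
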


\begin{lem} \label{lem:e0F1}
Let $w^{1}w^{2}\cdots w^{m}\in \mathrm{RF}_{\mathsf{FPF}}^{m}(z)$ be an increasing factorization of $w\in \hat{\mathcal{R}}_{\mathsf{FPF}}(z)$ for $z\in \mathfrak{F}_{\infty}$.
The action of the odd Kashiwara operator $\Tilde{e}_{\Bar{1}}^{F}$ on $w^{1}w^{2}\cdots w^{m}$ is given by the following rule:

$\Tilde{e}_{\Bar{1}}^{F}$ always changes the first two factors if $\Tilde{e}_{\Bar{1}}^{F}(w^{1}w^{2}\cdots w^{m})\neq \boldsymbol{0}$. 

\begin{enumerate}

\item $\left\vert w^{2} \right\vert =0$.

$\Tilde{e}_{\Bar{1}}^{F}(w^{1}w^{2}\cdots w^{m})= \boldsymbol{0}$.

\item $\left\vert w^{1} \right\vert =0$ and $\left\vert w^{2} \right\vert \neq 0$.

$\Tilde{e}_{\Bar{1}}^{F}(w^{1}w^{2}\cdots w^{m})=\Tilde{w}^{1}\Tilde{w}^{2}\cdots w^{m}$, where $\Tilde{w}^{1}=v_{1}$ and $\mathrm{cont}(\Tilde{w}^{2})=\mathrm{cont}(w^{2})\backslash \{v_{1}\}$ with $v_{1}$ being the first entry of $w_{2}$.

\item $\left\vert w^{1} \right\vert \neq 0$ and $\left\vert w^{2} \right\vert \neq 0$.

Let us write $w^{1}=u_{1}u_{2}\cdots$ and $w^{2}=v_{1}\cdots$.
If $u_{1}>v_{1}+1$, then $\Tilde{e}_{\Bar{1}}^{F}(w^{1}w^{2}\cdots w^{m})=\Tilde{w}^{1}\Tilde{w}^{2}\cdots w^{m}$, where $\mathrm{cont}(\Tilde{w}^{1})=\mathrm{cont}(w^{1})\cup \{v_{1}\}$ and $\mathrm{cont}(\Tilde{w}^{2})=\mathrm{cont}(w^{2})\backslash \{v_{1}\}$.
If $u_{1}=v_{1}+1$, then $\Tilde{e}_{\Bar{1}}^{F}(w^{1}w^{2}\cdots w^{m})=\Tilde{w}^{1}\Tilde{w}^{2}\cdots w^{m}$, where $\mathrm{cont}(\Tilde{w}^{1})=\mathrm{cont}(w^{1})\cup \{u_{1}+1\}$ and $\mathrm{cont}(\Tilde{w}^{2})=\mathrm{cont}(w^{2})\backslash \{u_{1}-1\}$.
Otherwise, $\Tilde{e}_{\Bar{1}}^{F}(w^{1}w^{2}\cdots w^{m})= \boldsymbol{0}$.
 
\end{enumerate}
\end{lem}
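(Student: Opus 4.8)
The plan is to deduce Lemma~\ref{lem:e0F1} from Lemma~\ref{lem:f0F1} by exhibiting $\Tilde{e}_{\Bar{1}}^{F}$ as the partial inverse of $\Tilde{f}_{\Bar{1}}^{F}$. First I would observe that at the level of primed tableaux the operators of Lemmas~\ref{lem:eP} and \ref{lem:fP} are mutually inverse partial bijections: $\Tilde{f}_{\Bar{1}}^{P}$ turns the relevant $1$ of the first row into a $2$ (on the diagonal) or a $2^{\prime}$ (off the diagonal), while $\Tilde{e}_{\Bar{1}}^{P}$ performs exactly the opposite replacement, and one checks in each positional case that $\Tilde{e}_{\Bar{1}}^{P}\Tilde{f}_{\Bar{1}}^{P}$ and $\Tilde{f}_{\Bar{1}}^{P}\Tilde{e}_{\Bar{1}}^{P}$ restrict to the identity on their natural domains. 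Since $\Tilde{e}_{\Bar{1}}^{F}$ and $\Tilde{f}_{\Bar{1}}^{F}$ are obtained by conjugating $\Tilde{e}_{\Bar{1}}^{P}$ and $\Tilde{f}_{\Bar{1}}^{P}$ by the bijection $\mathrm{H}_{\mathsf{Sp}}^{\prime}$ of Theorem~\ref{thm:bijection2} through equation~\eqref{eq:crystaliso}, this inverse relationship transports verbatim, giving $\Tilde{e}_{\Bar{1}}^{F}=(\Tilde{f}_{\Bar{1}}^{F})^{-1}$ as partial maps on $\mathrm{RF}_{\mathsf{FPF}}^{m}(z)$. It then suffices to check that the explicit rule of Lemma~\ref{lem:e0F1} is exactly this partial inverse.

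I would organize the verification by the branches of Lemma~\ref{lem:f0F1}, tracking how each transforms the pair (first block, minimum of the second block). Writing $w^{1}=u_{1}u_{2}\cdots$ and $w^{2}=v_{1}\cdots$ with $v_{1}=\min(\mathrm{cont}(w^{2}))$, the branch of Lemma~\ref{lem:f0F1} that detaches $u_{1}$ (when $u_{2}>u_{1}+1$) produces an output whose first block begins with $u_{2}$ and whose second block has minimum $u_{1}$, with $u_{2}>u_{1}+1$; applying the $u_{1}>v_{1}+1$ branch of Lemma~\ref{lem:e0F1} to that output returns $u_{1}$ to the first block and recovers the input. The branch with $u_{2}=u_{1}+1$ outputs a first block beginning with $u_{1}$ whose second entry is at least $u_{1}+2$, together with a second block of minimum $u_{1}-1$; this matches the input pattern $u_{1}=v_{1}+1$ of Lemma~\ref{lem:e0F1}, and reinserting $u_{1}+1$ into the first block reverses it. The boundary branches match similarly: the ``empty first block'' output of the $|w^{1}|=1$ branch of Lemma~\ref{lem:f0F1} is inverted by case~(2) of Lemma~\ref{lem:e0F1}, and the clause $|w^{2}|=0$ forces $\boldsymbol{0}$ precisely because $\Tilde{f}_{\Bar{1}}^{F}$ always enlarges the second block, so no factorization with empty second block lies in its image.

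To finish I would confirm that the nonzero domains genuinely coincide, and this is where I expect the main obstacle. The delicate point is the $u_{1}=v_{1}+1$ branch: naively the rule adjoins $u_{1}+1$ to the first block, which is legitimate only when $u_{1}+1$ is not already present, i.e.\ when the second entry of the first block exceeds $u_{1}+1$; otherwise the prescription would fail to preserve the total length $\lvert w\rvert$ and hence could not describe a genuine crystal operator. I would rule this out by showing that the degenerate configuration (namely $u_{1}=v_{1}+1$ together with second entry equal to $u_{1}+1$) never arises among the outputs of $\Tilde{f}_{\Bar{1}}^{F}$: the only branch of Lemma~\ref{lem:f0F1} producing outputs with $u_{1}=v_{1}+1$ is the $u_{2}=u_{1}+1$ branch, whose outputs have second entry at least $u_{1}+2$ by the strict increase of the original first block. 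Consequently, on the actual nonzero domain of $\Tilde{e}_{\Bar{1}}^{F}$ (equivalently, the image of $\Tilde{f}_{\Bar{1}}^{F}$) the rule is length-preserving and well defined, and the ``Otherwise, $\boldsymbol{0}$'' clauses capture exactly the complement. Assembling the branch-by-branch inverses then yields the stated description of $\Tilde{e}_{\Bar{1}}^{F}$.
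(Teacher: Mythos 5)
Your proposal is correct in substance but follows a genuinely different route from the paper. The paper proves Lemma~\ref{lem:f0F1} directly by tracking the recording tableau through the insertion and invoking Lemma~\ref{lem:fP}, and then simply asserts that Lemma~\ref{lem:e0F1} is proved ``similarly'', i.e.\ by a parallel direct analysis using Lemma~\ref{lem:eP} to decide when $\Tilde{e}_{\Bar{1}}^{P}Q\neq\boldsymbol{0}$ and which re-factorization realizes the pair $(P,\Tilde{e}_{\Bar{1}}^{P}Q)$. You instead derive Lemma~\ref{lem:e0F1} formally from Lemma~\ref{lem:f0F1}: since $\Tilde{e}_{\Bar{1}}^{P}$ and $\Tilde{f}_{\Bar{1}}^{P}$ are mutually inverse partial bijections on $\mathrm{PT}_{m}(\lambda)$ (which is a correct and easy check from Lemmas~\ref{lem:eP} and \ref{lem:fP}), conjugation by $\mathrm{H}_{\mathsf{Sp}}^{\prime}$ via Eq.~\eqref{eq:crystaliso} makes $\Tilde{e}_{\Bar{1}}^{F}$ the partial inverse of $\Tilde{f}_{\Bar{1}}^{F}$, so it suffices to invert the explicit rule of Lemma~\ref{lem:f0F1} branch by branch. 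This buys you a proof that never revisits the insertion algorithm, at the cost of having to establish the domain equality: not only that every output of $\Tilde{f}_{\Bar{1}}^{F}$ satisfies one of the non-vanishing clauses of Lemma~\ref{lem:e0F1} (which your branch matching gives), but also the converse, that every factorization satisfying those clauses actually lies in the image of $\Tilde{f}_{\Bar{1}}^{F}$. Your treatment of this converse is the one thin spot: you exclude the degenerate configuration $u_{1}=v_{1}+1$ with $u_{2}=u_{1}+1$ only for elements already known to be in the image, whereas what is needed is that it cannot occur in $\mathrm{RF}_{\mathsf{FPF}}^{m}(z)$ at all (and likewise that $v_{2}>u_{1}$ when $u_{1}=v_{1}+1$, so that the candidate preimage really maps forward to the given element). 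Both exclusions do hold, by the same parity and reducedness arguments the paper uses inside the proof of Lemma~\ref{lem:f0F1}: for instance, a word beginning $u_{1}(u_{1}+1)\cdots(u_{1}-1)\cdots$ with the intermediate letters exceeding $u_{1}+1$ is $\stackrel{\mathsf{Sp}}{\sim}$ to one beginning $u_{1}(u_{1}+1)(u_{1}+1)$, hence not an FPF-involution word. With those checks supplied, your inversion argument is complete and is arguably cleaner than repeating the paper's direct analysis.
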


We give the proof of Lemma~\ref{lem:f0F1} only.
The proof of Lemma~\ref{lem:e0F1} is similar.

\begin{proof}[Proof of Lemma~\ref{lem:f0F1}]

For cases (1) and (2), the statement is obvious.
Let us assume that $\left\vert w^{1} \right\vert \geq 2$ and $\left\vert w^{2}\right\vert\neq 0$ and write $w^{1}=u_{1}u_{2}\cdots u_{l}$ and $w^{2}=v_{1}\cdots$.
Let $Q$ be the recording tableau for $w^{1}w^{2}\cdots w^{m}$.

If $u_{1}>v_{1}$, then the configuration of the recording tableau up to the insertion of the letter $v_{1}$ has the configuration,
\setlength{\unitlength}{12pt}

\begin{center}
\begin{picture}(5,2)

\put(0,0){\line(0,1){1}}
\put(1,0){\line(0,1){1}}
\put(3,0){\line(0,1){1}}
\put(4,0){\line(0,1){1}}
\put(5,0){\line(0,1){1}}
\put(0,0){\line(1,0){5}}
\put(0,1){\line(1,0){5}}

\put(0,0){\makebox(1,1){$1$}}
\put(1,0){\makebox(2,1){$\cdots$}}
\put(3,0){\makebox(1,1){$1$}}
\put(4,0){\makebox(1,1){$2^{\prime}$}}

\put(3,1){\makebox(1,1){\small $l$}}
\end{picture}.
\end{center}
This portion does not change under the subsequent insertions.
By Lemma~\ref{lem:fP}, $\Tilde{f}_{\Bar{1}}^{P}(Q)=\boldsymbol{0}$ so that $\Tilde{f}_{\Bar{1}}^{F}(w^{1}w^{2}\cdots w^{m})=\boldsymbol{0}$.
It is easy to see that $u_{1}\neq v_{1}$.
Indeed, if $u_{2}>u_{1}+1$, then $u_{1}u_{2}\cdots u_{l}v_{1} \stackrel{\mathsf{Sp}}{\sim}u_{1}v_{1}u_{2}\cdots u_{l}$, which is not an FPF-involution word and if $u_{2}=u_{1}+1$, then $u_{1}u_{2}\cdots u_{l}v_{1}\stackrel{\mathsf{Sp}}{\sim}u_{1}(u_{1}+1)v_{1}\cdots u_{l}\stackrel{\mathsf{Sp}}{\sim}(u_{1}+1)u_{1}(u_{1}+1)\cdots u_{l}$, which is also not an FPF-involution word because $u_{1}+1$ is an odd letter. 
If $u_{1}<v_{1}$, then the first row of $Q$ has one of the following two,

\setlength{\unitlength}{12pt}

\begin{center}
\begin{picture}(16,2)

\put(0,0){\line(0,1){1}}
\put(1,0){\line(0,1){1}}
\put(4,0){\line(0,1){1}}
\put(5,0){\line(0,1){1}}
\put(0,0){\line(1,0){5}}
\put(0,1){\line(1,0){5}}

\put(0,0){\makebox(1,1){$1$}}
\put(1,0){\makebox(3,1){$\cdots$}}
\put(4,0){\makebox(1,1){$1$}}

\put(4,1){\makebox(1,1){\small $l$}}

\put(8,0){\line(0,1){1}}
\put(9,0){\line(0,1){1}}
\put(12,0){\line(0,1){1}}
\put(13,0){\line(0,1){1}}
\put(14,0){\line(0,1){1}}
\put(8,0){\line(1,0){8}}
\put(8,1){\line(1,0){8}}

\put(8,0){\makebox(1,1){$1$}}
\put(9,0){\makebox(3,1){$\cdots$}}
\put(12,0){\makebox(1,1){$1$}}
\put(13,0){\makebox(1,1){$x$}}
\put(14,0){\makebox(2,1){$\cdots$}}

\put(12,1){\makebox(1,1){\small $l$}}
\end{picture}
\end{center}
with $x\geq 2$.
Note that column insertions never happen in the insertion of $w^{2}$ to the increasing tableau,
\setlength{\unitlength}{12pt}

\begin{center}
\begin{picture}(5,1)

\put(0,0){\line(0,1){1}}
\put(1,0){\line(0,1){1}}
\put(4,0){\line(0,1){1}}
\put(5,0){\line(0,1){1}}
\put(0,0){\line(1,0){5}}
\put(0,1){\line(1,0){5}}

\put(0,0){\makebox(1,1){$u_{1}$}}
\put(1,0){\makebox(3,1){$\cdots$}}
\put(4,0){\makebox(1,1){$u_{l}$}}

\end{picture}.
\end{center}
By Lemma~\ref{lem:fP}, the configuration of the first row of $\Tilde{f}_{\Bar{1}}^{P}(Q)$ is either 
\setlength{\unitlength}{12pt}

\begin{center}
\begin{picture}(16,2)

\put(0,0){\line(0,1){1}}
\put(1,0){\line(0,1){1}}
\put(3,0){\line(0,1){1}}
\put(4,0){\line(0,1){1}}
\put(5,0){\line(0,1){1}}
\put(0,0){\line(1,0){5}}
\put(0,1){\line(1,0){5}}

\put(0,0){\makebox(1,1){$1$}}
\put(1,0){\makebox(2,1){$\cdots$}}
\put(3,0){\makebox(1,1){$1$}}
\put(4,0){\makebox(1,1){$2^{\prime}$}}

\put(4,1){\makebox(1,1){\small $l$}}

\put(6,0){\makebox(1,1){$\text{or}$}}

\put(8,0){\line(0,1){1}}
\put(9,0){\line(0,1){1}}
\put(11,0){\line(0,1){1}}
\put(12,0){\line(0,1){1}}
\put(13,0){\line(0,1){1}}
\put(14,0){\line(0,1){1}}
\put(8,0){\line(1,0){8}}
\put(8,1){\line(1,0){8}}

\put(8,0){\makebox(1,1){$1$}}
\put(9,0){\makebox(2,1){$\cdots$}}
\put(11,0){\makebox(1,1){$1$}}
\put(12,0){\makebox(1,1){$2^{\prime}$}}
\put(13,0){\makebox(1,1){$x$}}
\put(14,0){\makebox(2,1){$\cdots$}}

\put(12,1){\makebox(1,1){\small $l$}}
\end{picture}.
\end{center}
If $u_{2}>u_{1}+1$, then the increasing factorization that is equivalent to $w^{1}w^{2}$ and gives the above configuration is $(u_{2}\cdots u_{l})(u_{1}v_{1}\cdots)$.
If $u_{2}=u_{1}+1$, then the increasing factorization which is equivalent to $w^{1}w^{2}$ and gives the above configuration is $(u_{1}u_{3}\cdots u_{l})((u_{1}-1)v_{1}\cdots)$.
This is because $u_{1}u_{3}\cdots u_{l}(u_{1}-1)v_{1} \stackrel{\mathsf{Sp}}{\sim} u_{1}(u_{1}-1)\cdots u_{l}v_{1} \stackrel{\mathsf{Sp}}{\sim} u_{1}(u_{1}+1)\cdots u_{l}v_{1}$.

We omit the proof for the case when $\left\vert w^{2}\right\vert =0$; it is much simpler.
This completes the proof. 

\end{proof}

By construction, two vertices connected by $\Tilde{f}_{\Bar{1}}^{F}$ or $\Tilde{e}_{\Bar{1}}^{F}$ in the same connected component of $\mathfrak{q}(m)$-crystal $\mathrm{RF}^{m}_{\mathsf{FPF}}(z)$ have the same insertion tableau $P_{\mathsf{Sp}}(w)$ and $\Tilde{f}_{\Bar{1}}^{F}$ or $\Tilde{e}_{\Bar{1}}^{F}$ are independent of the choice of $P_{\mathsf{Sp}}(w)$.
Note that the first letter of $\Tilde{f}_{\Bar{1}}^{F}(w^{1}w^{2}\cdots w^{m})$ is always an even letter if $\Tilde{f}_{\Bar{1}}^{F}(w^{1}w^{2}\cdots w^{m})\neq \boldsymbol{0}$ for $w^{1}w^{2}\cdots w^{m} \in\mathrm{RF}^{m}_{\mathsf{FPF}}(z)$.

The set $\mathrm{RF}^{m}_{\mathsf{FPF}}(z)$ is a $\mathfrak{gl}(m)$-crystal with Kashiwara operators $\Tilde{e}_{i}^{F}$ and $\Tilde{f}_{i}^{F}$ given by Morse and Schilling~\cite{BS,MS}.
Before giving the verification of this claim, we restate Morse-Schilling's construction of their Kashiwara operators with appropriate alterations in our setting.

The crystal operators $\Tilde{e}^{F}_{i}$ and $\Tilde{f}^{F}_{i}$ only act on the block $w^{i}w^{i+1}$ $(i=1,\ldots,m-1)$.
We define the pairing of $w^{i}$ and $w^{i+1}$ as follows: 
Pair the largest $b\in \mathrm{cont}(w^{i+1})$ with the smallest $a>b$ in $\mathrm{cont}(w^{i})$ and if no such $a$ exists, then $b$ is unpaired.
We proceed in decreasing order on letters in $w^{i+1}$, ignoring previously paired letters in $w^{i}$.
Define
\[
R_{i}=\left\{b\in \mathrm{cont}(w^{i+1}) \relmiddle| b \text{ is unpaired in the } w^{i}w^{i+1}\text{-pairing} \right\}
\] 
and
\[
L_{i}=\left\{b\in \mathrm{cont}(w^{i}) \relmiddle| b \text{ is unpaired in the } w^{i}w^{i+1}\text{-pairing} \right\}.
\] 
Then $\Tilde{f}^{F}_{i}(w^{1}\cdots w^{m})$ is defined by replacing blocks $w^{i}w^{i+1}$ by $\Tilde{w}^{i}\Tilde{w}^{i+1}$ such that $\mathrm{cont}(\Tilde{w}^{i})=\mathrm{cont}(w^{i})\backslash \{c\}$ and $\mathrm{cont}(\Tilde{w}^{i+1})=\mathrm{cont}(w^{i+1})\cup \{c+s\}$ for $c=\max L_{i}$ and $s=\min \left\{j\geq 0 \relmiddle| c+j+1 \notin \mathrm{cont}(w^{i}) \right\}$.
If $L_{i}=\emptyset$, then $\Tilde{f}^{F}_{i}(w^{1}\cdots w^{m})=\boldsymbol{0}$.
Similarly, $\Tilde{e}^{F}_{i}(w^{1}\cdots w^{m})$ is defined by replacing blocks $w^{i}w^{i+1}$ by $\Tilde{w}^{i}\Tilde{w}^{i+1}$ such that $\mathrm{cont}(\Tilde{w}^{i})=\mathrm{cont}(w^{i})\cup \{c-t\}$ and $\mathrm{cont}(\Tilde{w}^{i+1})=\mathrm{cont}(w^{i+1})\backslash \{c\}$ for $c=\min R_{i}$ and $t=\min \left\{j\geq 0 \relmiddle| c-j-1 \notin \mathrm{cont}(w^{i+1}) \right\}$.
If $R_{i}=\emptyset$, then $\Tilde{e}^{F}_{i}(w^{1}\cdots w^{m})=\boldsymbol{0}$.

\begin{ex}
This example is taken from the top part of Fig.~\ref{fig:factorization}.
\[
w^{1}w^{2}=(234)(3) \stackrel{1}{\longrightarrow}\Tilde{w}^{1}\Tilde{w}^{2}=(24)(34).
\]
In this example, $L_{1}=\{2,3\}$, $c=\max L_{1}=3$, and $s=1$ so that $\mathrm{cont}(\Tilde{w}^{2})=\{3,4\}$.
\end{ex}

The weight of $w^{1}\cdots w^{m}$ is defined to be 
\[
\mathrm{wt}(w^{1}\cdots w^{m})=\sum_{i=1}^{m}\left\vert w^{i}\right\vert \epsilon_{i}.
\]

\begin{lem}
Let $w^{1}w^{2}\cdots w^{m}\in \mathrm{RF}^{m}_{\mathsf{FPF}}(z)$ for $z\in \mathfrak{F}_{\infty}$.
If $\Tilde{f}_{i}^{F}(w^{1}w^{2}\cdots w^{m})\neq \boldsymbol{0}$ for some $i\in\{1,\ldots,m-1\}$, then $\Tilde{f}_{i}^{F}(w^{1}w^{2}\cdots w^{m})\in \mathrm{RF}^{m}_{\mathsf{FPF}}(z)$.
\end{lem}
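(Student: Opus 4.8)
The plan is to reduce the assertion to a single word equivalence and then invoke Marberg's characterization of FPF-involution words. Write $w=w^{1}w^{2}\cdots w^{m}$ and let $\Tilde{w}=\Tilde{w}^{1}\cdots\Tilde{w}^{m}$ denote $\Tilde{f}_{i}^{F}(w^{1}\cdots w^{m})$, which by the Morse--Schilling rule differs from $w$ only in the two blocks $w^{i},w^{i+1}$, with $\mathrm{cont}(\Tilde{w}^{i})=\mathrm{cont}(w^{i})\backslash\{c\}$ and $\mathrm{cont}(\Tilde{w}^{i+1})=\mathrm{cont}(w^{i+1})\cup\{c+s\}$, where $c=\max L_{i}$ and $s=\min\{j\geq 0 \relmiddle| c+j+1\notin\mathrm{cont}(w^{i})\}$. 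That $\Tilde{w}^{1}\cdots\Tilde{w}^{m}$ is again an increasing factorization with $m$ blocks --- in particular that $c+s\notin\mathrm{cont}(w^{i+1})$, so that $\mathrm{cont}(\Tilde{w}^{i+1})$ has $|w^{i+1}|+1$ distinct entries and $|\Tilde{w}|=|w|$ --- is part of Morse and Schilling's construction. Hence the only genuinely new point is that the underlying word $\Tilde{w}$ belongs to $\hat{\mathcal{R}}_{\mathsf{FPF}}(z)$, and I would establish this by proving $\Tilde{w}\stackrel{\mathsf{Sp}}{\sim}w$.

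The core step is to realize the passage from $w$ to $\Tilde{w}$ as a finite sequence of the relations defining $\stackrel{\mathsf{Sp}}{\sim}$, namely $ab\sim ba$ ($|a-b|>1$), $a(a+1)a\sim(a+1)a(a+1)$, and, when the first block is involved (i.e.\ $i=1$), the special relation $i_{1}(i_{1}-1)\cdots\sim i_{1}(i_{1}+1)\cdots$ ($i_{1}\geq 2$). Since $\Tilde{w}$ and $w$ agree outside the substring coming from $w^{i}w^{i+1}$, all the relations are applied within that substring. I would split into two cases. If $s=0$, then $c+1\notin\mathrm{cont}(w^{i})$ and, because $c=\max L_{i}$ is unpaired, one checks from the pairing rule that neither $c-1$ nor an intervening copy of $c$ obstructs the transfer, so $c$ can be commuted rightward across the block boundary into its sorted position in $\Tilde{w}^{i+1}$ by repeated use of $ab\sim ba$, leaving its value unchanged. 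If $s>0$, then $c,c+1,\ldots,c+s$ is a maximal run of consecutive values in $\mathrm{cont}(w^{i})$, all but $c$ being paired; here I would first rewrite this run with the braid relation (and, for $i=1$, the special relation) so as to convert the value $c$ into $c+s$ --- the same manipulation carried out in the proof of Lemma~\ref{lem:f0F1}, e.g.\ $343\stackrel{\mathsf{Sp}}{\sim}434$ --- and then commute the produced letter $c+s$ into $\Tilde{w}^{i+1}$. In both cases the data recorded by $L_{i}$ and $R_{i}$ are exactly what make every intermediate commutation legal and prevent two equal letters from ever becoming adjacent.

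Finally, each of the three relations is a valid identity among symplectic Hecke words: it is either a Coxeter relation or the $\Theta$-conjugation identity underlying the special relation, and hence preserves both $z\in\mathfrak{F}_{\infty}$ and membership in $\mathcal{H}_{\mathsf{Sp}}(z)$. Therefore $\Tilde{w}\in\mathcal{H}_{\mathsf{Sp}}(z)$ and $\Tilde{w}$ lies in the same $\stackrel{\mathsf{Sp}}{\sim}$-class as $w$. As $w\in\hat{\mathcal{R}}_{\mathsf{FPF}}(z)$ is an FPF-involution word, Marberg's criterion says this class contains no word with equal adjacent letters, so by the same criterion $\Tilde{w}$ is itself an FPF-involution word; since it is a symplectic Hecke word for $z$, this gives $\Tilde{w}\in\hat{\mathcal{R}}_{\mathsf{FPF}}(z)$. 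Combined with the first paragraph this yields $\Tilde{f}_{i}^{F}(w^{1}w^{2}\cdots w^{m})\in\mathrm{RF}^{m}_{\mathsf{FPF}}(z)$. The main obstacle is the bookkeeping in the middle paragraph: one must verify, directly from the definition of the $w^{i}w^{i+1}$-pairing, that for arbitrary $s$ the moved letter meets only letters at distance at least two except inside the single consecutive run treated by the braid and special relations; this is precisely where the Morse--Schilling pairing rule and the FPF relations must be reconciled in full detail.
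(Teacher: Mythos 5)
Your overall logic is sound and lands in the same place as the paper, but by a noticeably more laborious route, and the heaviest step is left unverified. The paper's proof is a one-liner: by \cite{BS,MS}, applying the Morse--Schilling operator $\Tilde{f}_{i}^{F}$ leaves the underlying word unchanged up to Coxeter--Knuth equivalence, i.e.\ $w\stackrel{\mathsf{CK}}{\sim}\Tilde{w}$; since the two non-braid Coxeter--Knuth relations $bac\sim bca$ and $acb\sim cab$ (with $a<b<c$, hence $c-a\geq 2$) are just commutations and the braid relation is itself an $\stackrel{\mathsf{Sp}}{\sim}$-relation, this already gives $\Tilde{w}\stackrel{\mathsf{Sp}}{\sim}w$ and hence $\Tilde{w}\in\hat{\mathcal{R}}_{\mathsf{FPF}}(z)$ by Marberg's criterion (or by the remark after Theorem~\ref{thm:CK}). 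Your endgame --- pass to the $\stackrel{\mathsf{Sp}}{\sim}$-class and invoke the no-equal-adjacent-letters characterization --- is exactly this and is fine. What you do differently is to try to re-derive the equivalence $\Tilde{w}\stackrel{\mathsf{Sp}}{\sim}w$ by hand from the pairing rule, tracking how the letter $c$ is braided into $c+s$ and commuted across the block boundary. That verification is genuinely the content of the cited Morse--Schilling result, and you explicitly leave it as ``the main obstacle''; as written, your middle paragraph is a plan rather than a proof, so if you do not cite \cite{BS,MS} you must actually carry out that case analysis for arbitrary $s$ and arbitrary configurations of $\mathrm{cont}(w^{i})$, $\mathrm{cont}(w^{i+1})$. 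One further small point: the special relation $i_{1}(i_{1}-1)\cdots\sim i_{1}(i_{1}+1)\cdots$ is never needed here --- Coxeter--Knuth equivalence suffices even when $i=1$ --- so invoking it only complicates the bookkeeping without buying anything.
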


\begin{proof}
Let $\Tilde{f}_{i}^{F}(w^{1}w^{2}\cdots w^{m})$ be an increasing factorization of an FPF-involution word $\Tilde{w}$.
Then, $w=w^{1}w^{2}\cdots w^{m}\stackrel{\mathsf{CK}}{\sim}\Tilde{w}$~\cite{BS,MS} so that $\Tilde{w}\in \hat{\mathcal{R}}_{\mathsf{FPF}}(z)$ and $\Tilde{f}_{i}^{F}(w^{1}w^{2}\cdots w^{m})\in \mathrm{RF}^{m}_{\mathsf{FPF}}(z)$.
\end{proof}

Thus, our claim has been verified.

\begin{lem} \label{lem:const1}
Let $u^{1}u^{2}\cdots u^{m}$ and $v^{1}v^{2}\cdots v^{m}$ be two vertices in the same connected component of the $\mathfrak{gl}(m)$-crystal $\mathrm{RF}^{m}_{\mathsf{FPF}}(z)$ for $z\in \mathfrak{F}_{\infty}$.
Then, two words $u=u^{1}u^{2}\cdots u^{m}$ and $v=v^{1}v^{2}\cdots v^{m}$ have the same insertion tableau; $P_{\mathsf{Sp}}(u)=P_{\mathsf{Sp}}(v)$.
\end{lem}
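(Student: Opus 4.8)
The plan is to reduce the statement to the effect of a single even Kashiwara operator and then feed the result into Theorem~\ref{thm:CK}. First I would unwind the definition of ``connected component'': as fixed in Section~\ref{sec:queer}, the connected components of $\mathrm{RF}^{m}_{\mathsf{FPF}}(z)$ are the maximal subsets joined by the even operators $\Tilde{e}_{i}^{F},\Tilde{f}_{i}^{F}$ ($i=1,\ldots,m-1$). Hence $u^{1}u^{2}\cdots u^{m}$ and $v^{1}v^{2}\cdots v^{m}$ being in one component means there is a finite path between them built from these operators. Since $\Tilde{e}_{i}^{F}$ and $\Tilde{f}_{i}^{F}$ are mutually inverse (Definition~\ref{df:crystal}(4)), it suffices to prove that one application of $\Tilde{f}_{i}^{F}$ (the $\Tilde{e}_{i}^{F}$ case being symmetric) leaves the insertion tableau unchanged, and then to chain the resulting equalities along the path.

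For the single-step claim I would invoke the combinatorial Morse--Schilling description of $\Tilde{f}_{i}^{F}$ recalled just above, which modifies only the block $w^{i}w^{i+1}$. The crucial input is the invariance already used in the preceding lemma: if $\Tilde{f}_{i}^{F}(w^{1}\cdots w^{m})$ is an increasing factorization of a word $\Tilde{w}$, then $w=w^{1}\cdots w^{m}\stackrel{\mathsf{CK}}{\sim}\Tilde{w}$, the FPF-involution analogue of the Morse--Schilling result \cite{BS,MS}. Thus a single even Kashiwara operator moves the underlying FPF-involution word only within its Coxeter--Knuth class, and Theorem~\ref{thm:CK}, which states that $P_{\mathsf{Sp}}$ is constant on Coxeter--Knuth classes, gives $P_{\mathsf{Sp}}(w)=P_{\mathsf{Sp}}(\Tilde{w})$. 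Chaining this along the path from $u$ to $v$ and using transitivity of $\stackrel{\mathsf{CK}}{\sim}$ yields $u\stackrel{\mathsf{CK}}{\sim}v$, whence $P_{\mathsf{Sp}}(u)=P_{\mathsf{Sp}}(v)$.

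The genuine difficulty is entirely absorbed into Theorem~\ref{thm:CK}, whose proof is deferred to Appendix~A; granting it, the argument here is short. The one point that will require care is the invariance $w\stackrel{\mathsf{CK}}{\sim}\Tilde{w}$ in the symplectic setting: I would confirm that the block move defining $\Tilde{f}_{i}^{F}$---deleting $c=\max L_{i}$ from $w^{i}$ and adjoining $c+s$ to $w^{i+1}$---is realized on the underlying word by a sequence of the Coxeter--Knuth relations $(a+1)a(a+1)\sim a(a+1)a$, $bac\sim bca$, $acb\sim cab$, exactly as in the type $A$ case. This is precisely the content cited from \cite{BS,MS}, and it is what licenses the passage to Theorem~\ref{thm:CK}; everything else is bookkeeping along the crystal path.
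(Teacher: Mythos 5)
Your argument is correct and is essentially the paper's proof: both reduce the claim to the fact (cited from \cite{BS,MS}) that vertices in the same connected component of the $\mathfrak{gl}(m)$-crystal have Coxeter--Knuth equivalent underlying words, and then apply Theorem~\ref{thm:CK}. Your version merely makes explicit the single-operator step and the chaining along the crystal path that the paper leaves implicit in the citation.
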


\begin{proof}
Two words $u$ and $v$ are Coxeter-Knuth equivalent; $u\stackrel{\mathsf{CK}}{\sim}v$~\cite{BS,MS}.
Hence, by Theorem~\ref{thm:CK}, the assertion of Lemma~\ref{lem:const1} follows.
\end{proof}

This property is compatible with Eq~\eqref{eq:crystaliso}, i.e, the $\mathfrak{gl}(m)$-crystal structure of $\mathrm{RF}_{\mathsf{FPF}}^{m}(z)$ is in one-to-one correspondence with that of $\mathrm{PT}_{m}(\lambda)$, and $\Tilde{e}_{i}^{F}$ and $\Tilde{f}_{i}^{F}$ ($i=1,\ldots,m-1,\Bar{1}$) satisfy conditions in Definition~\ref{df:queer}.
Thus, we can adopt Kashiwara operators above as the proper even Kashiwara operators on $\mathrm{RF}^{m}_{\mathsf{FPF}}(z)$.

\begin{thm} \label{thm:q-FPF}
Let $z\in \mathfrak{F}_{\infty}$.
Then, the set $\mathrm{RF}_{\mathsf{FPF}}^{m}(z)$ admits a $\mathfrak{q}(m)$-crystal structure.
The even Kashiwara operators are $\Tilde{e}_{i}^{F}$ and $\Tilde{f}_{i}^{F}$ ($i=1,2,\ldots,m-1$), whereas the odd Kashiwara operators are given in Lemma~\ref{lem:f0F1} and \ref{lem:e0F1}.
\end{thm}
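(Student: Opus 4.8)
The plan is to promote the bijection $\mathrm{H}_{\mathsf{Sp}}^{\prime}$ of Theorem~\ref{thm:bijection2} to an isomorphism of $\mathfrak{q}(m)$-crystals and then to pull the axioms of Definition~\ref{df:queer} back from the primed-tableau side. Since $\mathrm{PT}_{m}(\lambda)$ is already known to be a $\mathfrak{q}(m)$-crystal \cite{AO1,AO2,Hir}, and since a disjoint union of $\mathfrak{q}(m)$-crystals is again one, it suffices to exhibit, for each increasing shifted tableau $P$ with $\mathfrak{row}(P)\in\hat{\mathcal{R}}_{\mathsf{FPF}}(z)$ of shape $\lambda$, a $\mathfrak{q}(m)$-crystal isomorphism between the fiber of $\mathrm{H}_{\mathsf{Sp}}^{\prime}$ over $P$ and $\mathrm{PT}_{m}(\lambda)$.

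First I would observe that every Kashiwara operator in play preserves the insertion tableau, so that $\mathrm{RF}_{\mathsf{FPF}}^{m}(z)$ decomposes as the disjoint union of these fibers. For the even operators this is exactly Lemma~\ref{lem:const1}: elements of one $\mathfrak{gl}(m)$-connected component are Coxeter--Knuth equivalent and hence share an insertion tableau by Theorem~\ref{thm:CK}. For the odd operators it is immediate from the explicit rules of Lemmas~\ref{lem:f0F1} and \ref{lem:e0F1}, which merely redistribute a single letter between the first two factors. On each fiber the assignment $w^{1}\cdots w^{m}\mapsto Q_{\mathsf{Sp}}(w^{1}\cdots w^{m})$ is then a bijection onto $\mathrm{PT}_{m}(\lambda)$.

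Next I would check that this fiberwise bijection intertwines all the crystal data. The weights agree because the number of letters of $w^{i}$ equals the number of entries of $Q$ equal to $i$ or $i^{\prime}$, whence $\mathrm{wt}(w^{1}\cdots w^{m})=\sum_{i}|w^{i}|\epsilon_{i}$ is carried to the weight of $Q$; in particular $\mathrm{wt}$ lands in $\bigoplus_{i}\mathbb{Z}_{\geq0}\epsilon_{i}$, which is axiom~(2). The even operators $\Tilde{e}_{i}^{F},\Tilde{f}_{i}^{F}$ are intertwined with $\Tilde{e}_{i}^{P},\Tilde{f}_{i}^{P}$ by the defining relation \eqref{eq:crystaliso} together with the compatibility recorded just above the theorem, and the odd operators $\Tilde{e}_{\Bar{1}}^{F},\Tilde{f}_{\Bar{1}}^{F}$ are intertwined with $\Tilde{e}_{\Bar{1}}^{P},\Tilde{f}_{\Bar{1}}^{P}$ precisely because the proofs of Lemmas~\ref{lem:f0F1} and \ref{lem:e0F1} identify the output factorizations as those whose recording tableaux are $\Tilde{f}_{\Bar{1}}^{P}Q$ and $\Tilde{e}_{\Bar{1}}^{P}Q$ of Lemmas~\ref{lem:fP} and \ref{lem:eP}. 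A bijection that intertwines $\mathrm{wt}$ and all even and odd operators transports axioms (1), (3), (4) and (5) of Definition~\ref{df:queer} verbatim, completing the argument.

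I expect the genuine work to sit in the even-operator compatibility invoked in the third paragraph, namely that the Morse--Schilling $\mathfrak{gl}(m)$-crystal structure on increasing factorizations matches, under $\mathrm{H}_{\mathsf{Sp}}^{\prime}$, the Hawkes--Paramonov--Schilling / Assaf--Oguz structure on primed tableaux. This is precisely what Lemma~\ref{lem:const1} is needed for, so that the pairing rule acts within a single fiber and hence descends to recording tableaux, followed by a comparison of the unpaired-letter bracketing with the rule defining $\Tilde{e}_{i}^{P}$ and $\Tilde{f}_{i}^{P}$. Once that identification and the odd-operator identifications of Lemmas~\ref{lem:f0F1}--\ref{lem:e0F1} are secured, the theorem follows immediately by transport of structure, since every axiom of Definition~\ref{df:queer} is preserved under a weight- and operator-preserving bijection.
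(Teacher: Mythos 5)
Your proposal follows essentially the same route as the paper: transport of the $\mathfrak{q}(m)$-structure on $\mathrm{PT}_{m}(\lambda)$ through the bijection $\mathrm{H}_{\mathsf{Sp}}^{\prime}$, with Theorem~\ref{thm:CK} (via Lemma~\ref{lem:const1}) and the explicit rules of Lemmas~\ref{lem:f0F1} and \ref{lem:e0F1} guaranteeing that every operator acts within the fiber of a fixed insertion tableau $P_{\mathsf{Sp}}(w)$, so that all axioms of Definition~\ref{df:queer} pull back verbatim. One caveat: the identification of the Morse--Schilling even operators with the transported ones of Eq~\eqref{eq:crystaliso}, which you locate as ``the genuine work,'' is precisely what the paper does \emph{not} carry out --- it is stated only as a Conjecture --- so the theorem as established takes the operators defined by Eq~\eqref{eq:crystaliso} as the even Kashiwara operators, and your argument is complete only if you likewise adopt that definition rather than proving the comparison.
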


This is our second main result.

\begin{con}
The even Kashiwara operators given by Eq~\eqref{eq:crystaliso} agree with those given by Morse and Schilling.
\end{con}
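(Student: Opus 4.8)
The plan is to prove the conjecture by showing that the symplectic shifted Hecke insertion \emph{intertwines} the two even crystal structures, i.e.\ that the recording map $w^{1}\cdots w^{m}\mapsto Q_{\mathsf{Sp}}(w^{1}\cdots w^{m})$ is a morphism of $\mathfrak{gl}(m)$-crystals from $\mathrm{RF}^{m}_{\mathsf{FPF}}(z)$ equipped with the Morse--Schilling operators $\tilde{e}_{i}^{F},\tilde{f}_{i}^{F}$ to $\mathrm{PT}_{m}(\lambda)$ equipped with the Hawkes--Paramonov--Schilling/Assaf--Oguz operators $\tilde{e}_{i}^{P},\tilde{f}_{i}^{P}$. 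First I would observe that, by Lemma~\ref{lem:const1} (hence by Theorem~\ref{thm:CK}), the Morse--Schilling operators preserve the insertion tableau $P_{\mathsf{Sp}}(w)$, while the operators defined by \eqref{eq:crystaliso} preserve it by construction. Both families therefore restrict to each fiber $\{P\}\times\mathrm{PT}_{m}(\lambda)$ of $\mathrm{H}_{\mathsf{Sp}}^{\prime}$, and since the transported operators are \emph{by definition} the HPS/AO operators on the $Q$-factor, the conjecture is equivalent to the assertion that $Q_{\mathsf{Sp}}$ sends $\tilde{f}_{i}^{F}$ to $\tilde{f}_{i}^{P}$ on every such fiber; by the involutivity $\tilde{f}_{i}b=b^{\prime}\Leftrightarrow\tilde{e}_{i}b^{\prime}=b$ it suffices to treat $\tilde{f}_{i}$.

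Next I would reduce to the rank-two case. For fixed $i$ the operator $\tilde{f}_{i}^{F}$ alters only the consecutive factors $w^{i}w^{i+1}$, whereas $\tilde{f}_{i}^{P}$ alters only the entries of $Q$ drawn from $\{i^{\prime},i,(i+1)^{\prime},(i+1)\}$; both operators are thus supported on the same $\{i,i+1\}$-data. Restricting the increasing factorization to these two factors and the primed tableau to these two values reduces the claim to $m=2$, where the Morse--Schilling rule becomes the signature pairing of $\mathrm{cont}(w^{1})$ against $\mathrm{cont}(w^{2})$ described in Section~\ref{sec:factorization}. The heart of the argument is then to match this pairing with the HPS/AO bracketing of the $1$'s against the $2^{\prime}$'s and $2$'s in $Q$: I would track, step by step through the insertion of the second factor into the increasing tableau produced by the first, which letters are recorded as unprimed (row-terminated) versus primed (column-terminated), and show that an unpaired letter of $w^{1}$ in the Morse--Schilling pairing corresponds precisely to an entry of $Q$ that the HPS/AO rule declares bracketed or unbracketed in the matching way, so that the two rules select the same box to modify.

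The main obstacle I expect is exactly this rank-two reconciliation. The HPS/AO even operators on primed tableaux are combinatorially intricate---they depend on the positions of primed entries and require a nontrivial reading-word bracketing---whereas the Morse--Schilling rule is phrased purely in terms of the contents of the two factors; bridging them forces a careful case analysis of how column insertions (recorded by primes) and the special diagonal and first-row behaviour of the insertion rules in Section~\ref{sec:insertion} translate the letter-level pairing into the tableau-level bracketing. The cases where a letter is inserted onto the main diagonal, or where rules (1)--(2) of the insertion leave a row unchanged, are the delicate points, since there the correspondence between a modified letter $u_{1}$ (or $u_{1}\pm 1$) in the factorization and a modified entry ($2^{\prime}$ versus $2$) in $Q$ is most easily confused.

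Finally I would note an alternative route and why I would not rely on it. One could instead argue that both the Morse--Schilling-based and the transported structures make $\mathrm{RF}^{m}_{\mathsf{FPF}}(z)$ a $\mathfrak{q}(m)$-crystal with the same weight and the same odd operators (Lemmas~\ref{lem:f0F1} and~\ref{lem:e0F1}), and attempt to invoke rigidity of $B_{m}(\lambda)$ to force the even operators to agree. However, Definition~\ref{df:queer} does not determine the even operators from the odd ones, and matching highest-weight vectors alone does not pin down the routing of $\tilde{f}_{i}$ when several elements share a weight; establishing the intertwining directly, as above, is therefore the reliable approach.
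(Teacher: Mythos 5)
You should first note a structural point: the paper does not prove this statement at all --- it is stated as a \emph{conjecture} and left open, so there is no paper proof to match your argument against. Your proposal must therefore stand on its own, and as written it does not: it is a strategy outline whose decisive step is announced rather than executed. The preliminary reductions are sound --- by Lemma~\ref{lem:const1} (via Theorem~\ref{thm:CK}) the Morse--Schilling operators preserve $P_{\mathsf{Sp}}(w)$, the transported operators of Eq~\eqref{eq:crystaliso} preserve it by construction, and by condition (4) of Definition~\ref{df:crystal} it suffices to treat $\tilde{f}_{i}$ --- so the conjecture is indeed equivalent to the intertwining statement $Q_{\mathsf{Sp}}(\tilde{f}_{i}^{F}(w^{1}\cdots w^{m}))=\tilde{f}_{i}^{P}(Q_{\mathsf{Sp}}(w^{1}\cdots w^{m}))$. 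But the ``heart of the argument,'' the reconciliation of the Morse--Schilling content pairing with the HPS/AO bracketing on the recording tableau, is exactly where you stop: ``I would track, step by step\dots'' is a promissory note, not a case analysis. You yourself identify the delicate cases (diagonal insertions, rules (1)--(2) leaving a row unchanged, the $u_{1}\pm 1$ shifts peculiar to the symplectic rules), and none of them is resolved. Given that the whole of Appendix~A is consumed by a comparable case analysis for Theorem~\ref{thm:CK}, there is no reason to expect this verification to be short or automatic.

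There is also a gap inside your reduction to $m=2$. The boxes of $Q_{\mathsf{Sp}}(w^{1}\cdots w^{m})$ labelled $i,i^{\prime},i+1,(i+1)^{\prime}$ are created by inserting $w^{i}w^{i+1}$ not into the empty tableau but into the increasing shifted tableau $P_{\mathsf{Sp}}(w^{1}\cdots w^{i-1})$, and they occupy a \emph{skew} region of the final shape. So the honest rank-two statement is relative: for an arbitrary increasing shifted tableau $T$ with $\mathfrak{row}(T)w^{i}w^{i+1}$ an FPF-involution word, the two-factor pairing on $\mathrm{cont}(w^{i}),\mathrm{cont}(w^{i+1})$ must match the HPS/AO bracketing on the resulting skew primed subtableau, and one must check that the HPS/AO operators on the full $Q$ commute with this restriction. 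You assert the restriction without justifying either half. Your closing remark --- that one cannot shortcut via rigidity of $B_{m}(\lambda)$ because Definition~\ref{df:queer} does not determine the even operators from the odd ones --- is a correct and worthwhile observation, and the overall plan is a reasonable program for settling the conjecture; but as submitted it is an unproved reduction plus an unexecuted combinatorial core, not a proof.
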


\setlength{\unitlength}{10pt}

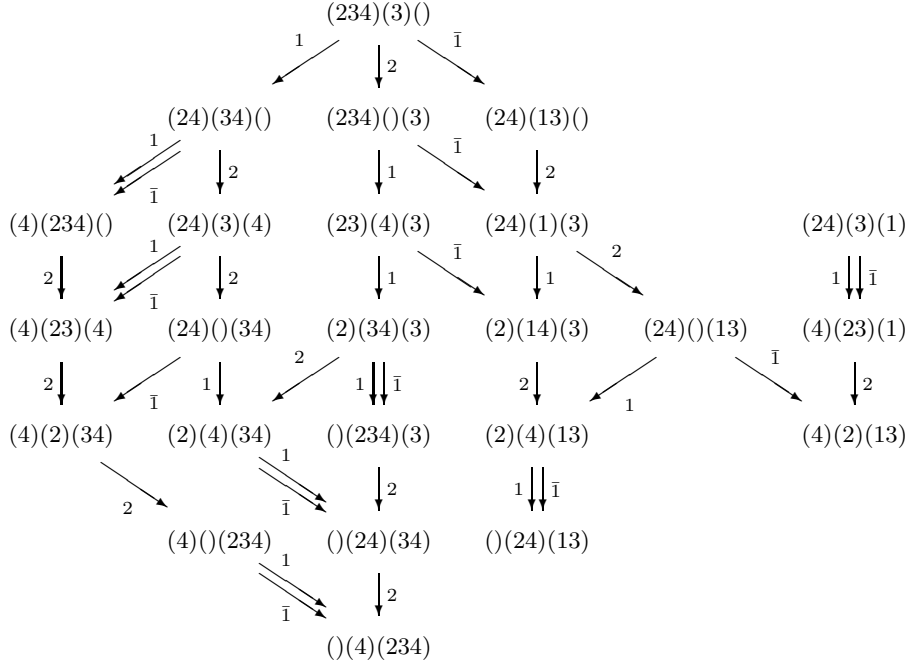
\begin{figure}
\begin{center}
\begin{picture}(33,26)

\put(1.5,11.8){\vector(0,-1){1.6}}
\put(1.5,15.8){\vector(0,-1){1.6}}

\put(7.5,11.8){\vector(0,-1){1.6}}
\put(7.5,15.8){\vector(0,-1){1.6}}
\put(7.5,19.8){\vector(0,-1){1.6}}

\put(13.5,3.8){\vector(0,-1){1.6}}
\put(13.5,7.8){\vector(0,-1){1.6}}
\put(13.3,11.8){\vector(0,-1){1.6}}
\put(13.7,11.8){\vector(0,-1){1.6}}
\put(13.5,15.8){\vector(0,-1){1.6}}
\put(13.5,19.8){\vector(0,-1){1.6}}
\put(13.5,23.8){\vector(0,-1){1.6}}

\put(19.3,7.8){\vector(0,-1){1.6}}
\put(19.7,7.8){\vector(0,-1){1.6}}
\put(19.5,11.8){\vector(0,-1){1.6}}
\put(19.5,15.8){\vector(0,-1){1.6}}
\put(19.5,19.8){\vector(0,-1){1.6}}

\put(31.5,11.8){\vector(0,-1){1.6}}
\put(31.3,15.8){\vector(0,-1){1.6}}
\put(31.7,15.8){\vector(0,-1){1.6}}

\put(3,8){\vector(3,-2){2.5}}
\put(6,12){\vector(-3,-2){2,5}}
\put(6,15.8){\vector(-3,-2){2,5}}
\put(6,16.2){\vector(-3,-2){2,5}}
\put(6,19.8){\vector(-3,-2){2,5}}
\put(6,20.2){\vector(-3,-2){2,5}}

\put(9,3.8){\vector(3,-2){2.5}}
\put(9,4.2){\vector(3,-2){2.5}}
\put(9,7.8){\vector(3,-2){2.5}}
\put(9,8.2){\vector(3,-2){2.5}}
\put(12,12){\vector(-3,-2){2,5}}
\put(12,24){\vector(-3,-2){2,5}}

\put(15,16){\vector(3,-2){2.5}}
\put(15,20){\vector(3,-2){2.5}}
\put(15,24){\vector(3,-2){2.5}}

\put(21,16){\vector(3,-2){2.5}}
\put(24,12){\vector(-3,-2){2,5}}

\put(27,12){\vector(3,-2){2.5}}

\put(0.5,10.5){\makebox(1,1){\scriptsize $2$}}
\put(0.5,14.5){\makebox(1,1){\scriptsize $2$}}

\put(6.5,10.5){\makebox(1,1){\scriptsize $1$}}
\put(7.5,14.5){\makebox(1,1){\scriptsize $2$}}
\put(7.5,18.5){\makebox(1,1){\scriptsize $2$}}

\put(13.5,2.5){\makebox(1,1){\scriptsize $2$}}
\put(13.5,6.5){\makebox(1,1){\scriptsize $2$}}
\put(12.3,10.5){\makebox(1,1){\scriptsize $1$}}
\put(13.7,10.5){\makebox(1,1){\scriptsize $\Bar{1}$}}
\put(13.5,14.5){\makebox(1,1){\scriptsize $1$}}
\put(13.5,18.5){\makebox(1,1){\scriptsize $1$}}
\put(13.5,22.5){\makebox(1,1){\scriptsize $2$}}

\put(18.3,6.5){\makebox(1,1){\scriptsize $1$}}
\put(19.7,6.5){\makebox(1,1){\scriptsize $\Bar{1}$}}
\put(18.5,10.5){\makebox(1,1){\scriptsize $2$}}
\put(19.5,14.5){\makebox(1,1){\scriptsize $1$}}
\put(19.5,18.5){\makebox(1,1){\scriptsize $2$}}

\put(31.5,10.5){\makebox(1,1){\scriptsize $2$}}
\put(30.3,14.5){\makebox(1,1){\scriptsize $1$}}
\put(31.7,14.5){\makebox(1,1){\scriptsize $\Bar{1}$}}

\put(3.5,5.8){\makebox(1,1){\scriptsize $2$}}
\put(4.5,9.8){\makebox(1,1){\scriptsize $\Bar{1}$}}
\put(4.5,13.6){\makebox(1,1){\scriptsize $\Bar{1}$}}
\put(4.5,15.7){\makebox(1,1){\scriptsize $1$}}
\put(4.5,17.6){\makebox(1,1){\scriptsize $\Bar{1}$}}
\put(4.5,19.7){\makebox(1,1){\scriptsize $1$}}

\put(9.5,1.8){\makebox(1,1){\scriptsize $\Bar{1}$}}
\put(9.5,3.8){\makebox(1,1){\scriptsize $1$}}
\put(9.5,5.8){\makebox(1,1){\scriptsize $\Bar{1}$}}
\put(9.5,7.8){\makebox(1,1){\scriptsize $1$}}
\put(10,11.5){\makebox(1,1){\scriptsize $2$}}
\put(10,23.5){\makebox(1,1){\scriptsize $1$}}

\put(16,15.5){\makebox(1,1){\scriptsize $\Bar{1}$}}
\put(16,19.5){\makebox(1,1){\scriptsize $\Bar{1}$}}
\put(16,23.5){\makebox(1,1){\scriptsize $\Bar{1}$}}

\put(22.5,9.7){\makebox(1,1){\scriptsize $1$}}
\put(22,15.5){\makebox(1,1){\scriptsize $2$}}

\put(28,11.5){\makebox(1,1){\scriptsize $\Bar{1}$}}

\put(0,8){\makebox(3,2){\small $(4)(2)(34)$}}
\put(0,12){\makebox(3,2){\small $(4)(23)(4)$}}
\put(0,16){\makebox(3,2){\small $(4)(234)()$}}

\put(6,4){\makebox(3,2){\small $(4)()(234)$}}
\put(6,8){\makebox(3,2){\small $(2)(4)(34)$}}
\put(6,12){\makebox(3,2){\small $(24)()(34)$}}
\put(6,16){\makebox(3,2){\small $(24)(3)(4)$}}
\put(6,20){\makebox(3,2){\small $(24)(34)()$}}

\put(12,0){\makebox(3,2){\small $()(4)(234)$}}
\put(12,4){\makebox(3,2){\small $()(24)(34)$}}
\put(12,8){\makebox(3,2){\small $()(234)(3)$}}
\put(12,12){\makebox(3,2){\small $(2)(34)(3)$}}
\put(12,16){\makebox(3,2){\small $(23)(4)(3)$}}
\put(12,20){\makebox(3,2){\small $(234)()(3)$}}
\put(12,24){\makebox(3,2){\small $(234)(3)()$}}

\put(18,4){\makebox(3,2){\small $()(24)(13)$}}
\put(18,8){\makebox(3,2){\small $(2)(4)(13)$}}
\put(18,12){\makebox(3,2){\small $(2)(14)(3)$}}
\put(18,16){\makebox(3,2){\small $(24)(1)(3)$}}
\put(18,20){\makebox(3,2){\small $(24)(13)()$}}

\put(24,12){\makebox(3,2){\small $(24)()(13)$}}

\put(30,8){\makebox(3,2){\small $(4)(2)(13)$}}
\put(30,12){\makebox(3,2){\small $(4)(23)(1)$}}
\put(30,16){\makebox(3,2){\small $(24)(3)(1)$}}

\end{picture}
\end{center}
\caption{An example of $\mathfrak{q}(3)$-crystal structure of increasing factorizations of FPF-involution words, $2143$, $2343$, $2413$, $2431$, $2434$, $4213$, $4231$, and $4234$.
The set of these words is $\hat{\mathcal{R}}_{\mathsf{FPF}}(546213)$.}
 \label{fig:factorization}
\end{figure}

It is not hard to check that the $\mathfrak{q}(m)$-crystal graph of $\mathrm{RF}_{\mathsf{FPF}}^{m}(z)$ satisfies the local queer axioms introduced by Assaf and  Oguz~\cite{AO1,AO2} and Gillespie, Hawkes, Poh, and Schilling~\cite{GHPS}.
We omit the details

\appendix
\section{}

In this Appendix, $T\stackrel{\mathsf{Sp}}{\leftarrow}w$ is taken to be the resulting tableau by the insertion unless stated otherwise, where $T$ is an increasing tableau and $w$ is a letter or word.

\begin{proof}[Proof of Theorem~\ref{thm:CK}]
The assertion is the direct consequence of Lemmas~\ref{lem:bac}, \ref{lem:acb}, and \ref{lem:braid}.
\end{proof}

\begin{lem} \label{lem:even}
Let $T$ be an increasing shifted tableau of shape $\lambda$ such that $\mathfrak{row}(T)$ is an FPF-involution word.
Then entries in the main diagonal of $T$ are even letters.
\end{lem}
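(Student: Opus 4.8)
The plan is to reduce the claim to its first-row case and then exploit the parity-sensitivity of the insertion rules. First I would record the key reduction: for each $k$ with $1\le k\le l(\lambda)$, the rows $k,k+1,\ldots,l(\lambda)$ of $T$ form an increasing shifted tableau $T^{(\ge k)}$ whose row reading word is the prefix $T_{l(\lambda)}T_{l(\lambda)-1}\cdots T_{k}$ of $\mathfrak{row}(T)$. Since a prefix of an FPF-involution word is again an FPF-involution word (the associated build-up from $\Theta$ remains length-increasing, so the prefix is again a reduced symplectic Hecke word), $\mathfrak{row}(T^{(\ge k)})$ is an FPF-involution word, and the top-left entry of $T^{(\ge k)}$ is exactly the diagonal entry $T_{(k,k)}$. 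Hence it suffices to prove the single statement: the top-left entry of any increasing shifted tableau $S$ whose reading word is an FPF-involution word is even. Here I would use that such an $S$ is recovered by insertion, $P_{\mathsf{Sp}}(\mathfrak{row}(S))=S$, which follows by iterating the cited relation $\mathfrak{row}(S\stackrel{\mathsf{Sp}}{\leftarrow}a)\stackrel{\mathsf{Sp}}{\sim}\mathfrak{row}(S)a$ together with the insertion bijection.

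To prove the top-left statement, I would track the first entry of the first row throughout the insertion of the word $v=\mathfrak{row}(S)=u_{1}u_{2}\cdots u_{N}$ into the empty tableau. Every letter is first inserted into the first row, and once a step leaves the first row (either bumping down to the second row or switching to a column insertion in some column $\ge 2$) it never again touches the cell $(1,1)$, which is the sole cell of the first column. After inserting $u_{1}$ the cell $(1,1)$ holds $u_{1}$, and $u_{1}$ is even by the remark that every symplectic Hecke word begins with an even letter. For any later letter $a$ inserted into the first row with $a\le T_{(1,1)}$, the entry $b=T_{(1,1)}$ is the first entry of that row; rule~(2) fires exactly when $a\not\equiv b\pmod 2$ and leaves $(1,1)$ unchanged, rule~(1) fires when $a=b$ and leaves it unchanged, and rule~(3) replaces $b$ by $a$ only in the remaining case $a<b$ with $a\equiv b\pmod 2$. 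In every case the parity of $T_{(1,1)}$ is unchanged, so it stays even.

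The hard part, and the reason the argument must route through the prefix reduction rather than tracking the whole diagonal at once, is that the parity protection above is special to the first row: rule~(2) is a row-only rule, whereas a diagonal cell $(j,j)$ with $j\ge 2$ sits at the bottom of column $j$ and can be overwritten by a column insertion, where rule~(3) may replace its entry by one of the opposite parity. Placing the relevant diagonal cell at the top-left of the sub-tableau $T^{(\ge k)}$ is precisely what moves it into the first-row, first-column position where rule~(2) guarantees parity preservation. The remaining points to nail down are the two structural inputs invoked in the reduction — that prefixes of FPF-involution words are FPF-involution words, and that $P_{\mathsf{Sp}}(\mathfrak{row}(S))=S$ for increasing shifted $S$ — both of which I expect to be routine consequences of the insertion framework recalled in this section.
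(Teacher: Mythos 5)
Your argument is essentially correct, but it takes a markedly different and heavier route than the paper's. The paper's proof is a two-line commutation argument: writing $T'_i$ for the portion of $T$ below row $i$, every entry of $T'_i$ exceeds $T_{(i,i)}+1$, so in $\mathfrak{row}(T)=\mathfrak{row}(T'_i)\,T_{(i,i)}\cdots$ the diagonal entry commutes to the front, producing an equivalent symplectic Hecke word beginning with $T_{(i,i)}$; the remark that every symplectic Hecke word begins with an even letter then finishes immediately. You instead reduce to the $(1,1)$ cell via the prefix/sub-tableau observation and propagate parity through the insertion algorithm. Both of your reductions (prefixes of FPF-involution words are FPF-involution words; $T^{(\ge k)}$ is an increasing shifted tableau whose reading word is that prefix and whose top-left entry is $T_{(k,k)}$) are sound, and your parity-tracking of the $(1,1)$ cell is correct: column insertions only move rightward and so never reach column $1$, and each of rules (1)--(3) applied at the first entry of the first row either leaves $(1,1)$ alone or replaces its entry by one of equal parity. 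The paper's argument needs nothing about the insertion algorithm; yours explains the evenness directly from the parity-sensitivity of rule (2), at the cost of invoking the bijection theorem.

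One justification needs repair. You derive $P_{\mathsf{Sp}}(\mathfrak{row}(S))=S$ by iterating $\mathfrak{row}(S\stackrel{\mathsf{Sp}}{\leftarrow}a)\stackrel{\mathsf{Sp}}{\sim}\mathfrak{row}(S)a$, but equivalence of words does not determine the insertion tableau --- the paper's own example $26434\stackrel{\mathsf{Sp}}{\sim}42346$ with distinct insertion tableaux shows this inference is invalid as stated. The fix is easy and avoids that identity entirely: by the surjectivity in Marberg's bijection theorem, any increasing shifted tableau $S$ with $\mathfrak{row}(S)\in\hat{\mathcal{R}}_{\mathsf{FPF}}(z)$ equals $P_{\mathsf{Sp}}(w)$ for some $w\in\hat{\mathcal{R}}_{\mathsf{FPF}}(z)$ (pair $S$ with any standard primed tableau of the same shape), and your parity-tracking applied to that $w$, whose first letter is even, already yields that $S_{(1,1)}$ is even.
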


\begin{proof}
Let $T^{\prime}_{i}$ be the portion of $T$ below its $i$th row and let $T_{(i,i)}=d_{i}$ ($i=1,2,\ldots,l(\lambda)-1$).
Since all entries of $T^{\prime}_{i}$ are strictly greater than $d_{i}$+1, $\mathfrak{row}(T^{\prime}_{i})d_{i}\stackrel{\mathsf{Sp}}{\sim}d_{i}\mathfrak{row}(T^{\prime}_{i})$ so that $d_{i}$ must be an even letter.
It is clear that $d_{l(\lambda)}$ is an even letter.
This is also obvious from the algorithm of FPF-involution Coxeter-Knuth insertion.
\end{proof}

\begin{lem} \label{lem:T1}
Let $T$ be an increasing shifted tableau and $a$ be a letter such that $\mathfrak{row}(T)a$ is an FPF-involution word.
Let $T_{(1,1)}=x$ and suppose that $a<x$.
If $a \not\equiv x \pmod{2}$, then $T_{(1,1)}=a+1$, $T_{(1,3)}\geq a+4$ and the insertion $T\stackrel{\mathsf{Sp}}{\leftarrow}a$ changes the first row of $T$ only.
If $a\equiv x$ ($\mathrm{mod}$ $2$), then the insertion $T\stackrel{\mathsf{Sp}}{\leftarrow}a$ changes the first row of $T$ only.
\end{lem}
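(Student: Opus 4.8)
The plan is to prove the three assertions separately, using throughout two facts about $\mathfrak{row}(T)$. By Lemma~\ref{lem:even} the corner entry $x=T_{(1,1)}$ is even, and since it is the top-left cell of an increasing shifted tableau it is the minimal entry of $T$; by Marberg's results an FPF-involution word begins with an even letter, while the relation $\stackrel{\mathsf{Sp}}{\sim}$ preserves the class of FPF-involution words. First I would pin down the corner in the mixed-parity case. Suppose $a\not\equiv x \pmod 2$, so that $a$ is odd, and suppose toward a contradiction that $x\geq a+3$. Then every letter of $\mathfrak{row}(T)$ is $\geq x\geq a+3$ and hence differs from $a$ by at least $3$, so repeatedly applying $bc\sim cb$ for $\lvert b-c\rvert>1$ slides $a$ to the front and gives $\mathfrak{row}(T)\,a \stackrel{\mathsf{Sp}}{\sim} a\,\mathfrak{row}(T)$. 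The right-hand side begins with the odd letter $a$, impossible for an FPF-involution word; therefore $x=a+1$, the asserted value of $T_{(1,1)}$.

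Next, for $T_{(1,3)}\geq a+4$: since $T_{(1,2)}>x=a+1$ and $T_{(1,3)}>T_{(1,2)}$, the only way to fail this is $T_{(1,2)}=a+2$ and $T_{(1,3)}=a+3$, i.e. the first row begins $(a+1)(a+2)(a+3)$. I would rule this out by exhibiting a repeated adjacent letter in the $\stackrel{\mathsf{Sp}}{\sim}$-class of $\mathfrak{row}(T)\,a$, contradicting Marberg's characterization. Commuting $a$ leftward (it passes every letter $\geq a+2$) brings it just to the right of the unique copy of $a+1$; the special front relation $i_1(i_1-1)\cdots \sim i_1(i_1+1)\cdots$ then converts the prefix $(a+1)a$ into $(a+1)(a+2)$, producing a second $a+2$ that must be merged with the first-row $a+2$. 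A key structural observation keeps this finite: by Lemma~\ref{lem:even} together with strict increase along rows and columns, no letter below the first row equals $a+1$ or $a+2$, and the value $a+3$ can occur below the first row only at the diagonal cell $(2,2)$. Hence the sole obstruction to merging the two copies of $a+2$ is a single $a+3$, which is cleared by the braid relation on the pattern $(a+3)(a+2)(a+3)$; either way one reaches a word with two equal adjacent letters.

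Finally, for the claim that $T\stackrel{\mathsf{Sp}}{\leftarrow}a$ changes the first row only, I would run the insertion and track a single invariant. If $a\equiv x$, the first step is rule~(3) at the diagonal box $(1,1)$: it replaces $x$ by $a$ and, because $(1,1)$ lies on the main diagonal, ejects $x$ into column $2$ as a column insertion. If $a\not\equiv x$, so $x=a+1$, the first step is rule~(2): it leaves the first row fixed and sends $a+2$ into column $2$ as a column insertion. In both cases the value entering column $2$ is at most its top entry, and this invariant propagates: whenever a value $v$ enters column $j$ with $v\leq T_{(1,j)}$, rule~(1) leaves the column untouched and passes $v+1<T_{(1,j+1)}$ to column $j+1$, while rule~(3) replaces the top (first-row) cell and passes $T_{(1,j)}<T_{(1,j+1)}$ to column $j+1$. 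Thus every step is a column insertion confined to first-row cells, terminating by appending one box at the end of row $1$; since rule~(2) fires only on rows and no further row insertion occurs, no cell below the first row is ever touched.

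The main obstacle is the second part. Driving $\mathfrak{row}(T)\,a$ to a word with equal adjacent letters requires careful bookkeeping of the lower-row letters as they are routed past the braid and special relations, and this is where the argument becomes delicate; the parity and minimality facts above, together with the near-total confinement of $a+1,a+2,a+3$ to the first row, are precisely what make the case analysis manageable.
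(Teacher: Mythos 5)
Your first and third assertions are handled correctly. The derivation of $T_{(1,1)}=a+1$ by sliding the odd letter $a$ to the front when $x\geq a+3$ and invoking the even-first-letter property is the right argument (the paper leaves it implicit), and your column invariant --- the value entering column $j$ is at most $T_{(1,j)}$, so every match occurs at a first-row cell and the bump propagates along the first row until a box is appended --- is a cleaner and more complete justification of the ``changes the first row only'' claim than the paper's ``it is obvious''. (One small slip: in the propagation step the passed value $v+1=T_{(1,j)}+1$ may \emph{equal} $T_{(1,j+1)}$, so the invariant must be carried with $\leq$, as you state it at the outset, not with $<$.)

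The second assertion is where the proof breaks, at exactly the step you flag as delicate. After the braid move creates $(a+3)(a+2)(a+3)$, the repeated letter you need is a pair of $(a+3)$'s, and these are separated by the non-diagonal part $\mathfrak{row}(B)$ of the second row; $B$ may begin with $T_{(2,3)}=a+4$, which does not commute with $a+3$, and the reduction stalls there. This is not a repairable presentational gap: the assertion $T_{(1,3)}\geq a+4$ is false. Take $a=1$ and $T$ with first row $2,3,4$ and second row $4,5$ (so $T=P_{\mathsf{Sp}}(45234)$ and $\mathfrak{row}(T)=45234$). Then $\mathfrak{row}(T)a=452341$ is a symplectic Hecke word for the element identified with $654321$, and each successive conjugation raises the Coxeter length of the involution by exactly two, so the word has the minimal length $6=\frac12(\ell(654321)-\ell(214365))$ and is an FPF-involution word; yet $T_{(1,3)}=4=a+3$. (The insertion $T\stackrel{\mathsf{Sp}}{\leftarrow}1$ still only appends a box at $(1,4)$, so the conclusion that the rest of the paper actually relies on survives.) You should be aware that the paper's own proof of this point contains the same defect: its displayed computation ends by placing two letters $a+3$ adjacently, which tacitly commutes $a+3$ past $\mathfrak{row}(B)$ and is illegitimate precisely when $T_{(2,3)}=a+4$.
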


\begin{proof}
Let $T_{1}$ be the first row of $T$ and $T^{\prime}$ be the portion of $T$ below the first row.

\begin{enumerate}
\item $a\not\equiv x \pmod{2}$.

Let us write $T_{1}$ as
\setlength{\unitlength}{12pt}
\begin{picture}(4,1)
\put(0,0){\line(0,1){1}}
\put(1,0){\line(0,1){1}}
\put(2,0){\line(0,1){1}}
\put(4,0){\line(0,1){1}}
\put(0,0){\line(1,0){4}}
\put(0,1){\line(1,0){4}}
\put(0,0){\makebox(1,1){$x$}}
\put(1,0){\makebox(1,1){$y$}}
\put(2,0){\makebox(2,1){$A$}}
\end{picture}.
The insertion $T\stackrel{\mathsf{Sp}}{\leftarrow}a$ replace $y$ by $a+2$ so that $x=a+1$.

First, let us consider the case when $y=a+2$.
Then, the entry $a+3$ exists in $T^{\prime}$.
Otherwise, all the entries in $T^{\prime}$ are greater than or equal to $a+4$ so that
\begin{align*}
\mathfrak{row}(T)a&=\mathfrak{row}(T^{\prime})T_{(1,1)}T_{(1,2)}\mathfrak{row}(A)a \\
&\stackrel{\mathsf{Sp}}{\sim}(a+1)(a+2)\mathfrak{row}(T^{\prime})\mathfrak{row}(A)a \\
&\stackrel{\mathsf{Sp}}{\sim}(a+1)a\mathfrak{row}(T^{\prime})\mathfrak{row}(A)a \\
&\stackrel{\mathsf{Sp}}{\sim}(a+1)\mathfrak{row}(T^{\prime})\mathfrak{row}(A)aa,
\end{align*}
which implies that $\mathfrak{row}(T)a$ is not an FPF-involution word and contradicts the assumption of Lemma~\ref{lem:T1}.
The entry $a+3$ exists only in the position $(2,2)$.
If $T_{(1,3)}=a+3$, then 
\begin{align*}
\mathfrak{row}(T)a&=\mathfrak{row}(T^{\prime})(a+1)(a+2)(a+3)\cdots a \\
&\stackrel{\mathsf{Sp}}{\sim}(a+1)a\mathfrak{row}(T^{\prime})(a+2)(a+2)\cdots \\
%&\stackrel{\mathsf{Sp}}{\sim}(a+1)a\mathfrak{row}(T^{\prime})(a+2)(a+3)\cdots \\
&\stackrel{\mathsf{Sp}}{\sim}(a+1)(a+2)\mathfrak{row}(T^{\prime})(a+2)(a+3)\cdots \\
&\stackrel{\mathsf{Sp}}{\sim}(a+1)\cdots (a+2)T_{(2,2)}(a+2)(a+3)\cdots \\
&\stackrel{\mathsf{Sp}}{\sim}(a+1)\cdots (a+3)(a+2)(a+3)(a+3)\cdots
\end{align*}
so that $\mathfrak{row}(T)a$ is not an FPF-involution word.
Hence, $T_{(1,3)}\geq a+4$, which implies that the insertion $T\stackrel{\mathsf{Sp}}{\leftarrow}a$ yields no bumped letter to be inserted to the second row; the first row of $T\stackrel{\mathsf{Sp}}{\leftarrow}a$ is
\setlength{\unitlength}{12pt}
\begin{picture}(8,1)
\put(0,0){\line(0,1){1}}
\put(2,0){\line(0,1){1}}
\put(4,0){\line(0,1){1}}
\put(6,0){\line(0,1){1}}
\put(8,0){\line(0,1){1}}
\put(0,0){\line(1,0){8}}
\put(0,1){\line(1,0){8}}
\put(0,0){\makebox(2,1){$a+1$}}
\put(2,0){\makebox(2,1){$a+2$}}
\put(4,0){\makebox(2,1){$a+3$}}
\put(6,0){\makebox(2,1){$A$}}
\end{picture}.

Next, let us consider the case when $y>a+2$.
It is obvious that the insertion $T\stackrel{\mathsf{Sp}}{\leftarrow}a$ yields no bumped letter to be inserted to the second row; the first row of $T\stackrel{\mathsf{Sp}}{\leftarrow}a$ is
\setlength{\unitlength}{12pt}
\begin{picture}(7,1)
\put(0,0){\line(0,1){1}}
\put(2,0){\line(0,1){1}}
\put(4,0){\line(0,1){1}}
\put(5,0){\line(0,1){1}}
\put(7,0){\line(0,1){1}}
\put(0,0){\line(1,0){7}}
\put(0,1){\line(1,0){7}}
\put(0,0){\makebox(2,1){$a+1$}}
\put(2,0){\makebox(2,1){$a+2$}}
\put(4,0){\makebox(1,1){$y$}}
\put(5,0){\makebox(2,1){$A$}}
\end{picture}.

\item $a\equiv x \pmod{2}$.

Let us write $T_{1}$ as
\setlength{\unitlength}{12pt}
\begin{picture}(3,1)
\put(0,0){\line(0,1){1}}
\put(1,0){\line(0,1){1}}
\put(3,0){\line(0,1){1}}
\put(0,0){\line(1,0){3}}
\put(0,1){\line(1,0){3}}
\put(0,0){\makebox(1,1){$x$}}
\put(1,0){\makebox(2,1){$A$}}
\end{picture}.
It is obvious that the insertion $T\stackrel{\mathsf{Sp}}{\leftarrow}a$ yields no bumped letter to be inserted to the second row; the first row of $T\stackrel{\mathsf{Sp}}{\leftarrow}a$ is
\setlength{\unitlength}{12pt}
\begin{picture}(4,1)
\put(0,0){\line(0,1){1}}
\put(1,0){\line(0,1){1}}
\put(2,0){\line(0,1){1}}
\put(4,0){\line(0,1){1}}
\put(0,0){\line(1,0){4}}
\put(0,1){\line(1,0){4}}
\put(0,0){\makebox(1,1){$a$}}
\put(1,0){\makebox(1,1){$x$}}
\put(2,0){\makebox(2,1){$A$}}
\end{picture}.

\end{enumerate}
\end{proof}

\begin{lem} \label{lem:single_bac}
Let $T$ be a tableau 
\setlength{\unitlength}{12pt}
\begin{picture}(1,1)
\put(0,0){\line(0,1){1}}
\put(1,0){\line(0,1){1}}
\put(0,0){\line(1,0){1}}
\put(0,1){\line(1,0){1}}

\put(0,0){\makebox(1,1){$p$}}
\end{picture}
such that $pbac$ is an FPF-involution word, where $a<b<c$.
Then, $T\stackrel{\mathsf{Sp}}{\leftarrow}bac=T\stackrel{\mathsf{Sp}}{\leftarrow}bca$.
\end{lem}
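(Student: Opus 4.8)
The plan is to strip off the insertion step common to both sides and then verify a commutation of two insertions into a tiny tableau by a direct, FPF-constrained case analysis. First I would observe that both $T\stackrel{\mathsf{Sp}}{\leftarrow}bac$ and $T\stackrel{\mathsf{Sp}}{\leftarrow}bca$ begin by inserting $b$ into the one-cell tableau $T$. Writing $T'=T\stackrel{\mathsf{Sp}}{\leftarrow}b$, the assertion is equivalent to
\[
T'\stackrel{\mathsf{Sp}}{\leftarrow}ac=T'\stackrel{\mathsf{Sp}}{\leftarrow}ca ,
\]
i.e.\ to the statement that inserting $a$ then $c$ into $T'$ agrees with inserting $c$ then $a$. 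Since $pbac$ is a symplectic Hecke word it begins with an even letter, so $p=T_{(1,1)}$ is even; and $b\neq p$ because an FPF-involution word is reduced and so has no equal adjacent letters. Hence $T'$ is a one-row tableau with two cells.

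One might hope to finish abstractly. By Marberg's compatibility result, applied iteratively and using that $\stackrel{\mathsf{Sp}}{\sim}$ is a congruence, the reading words satisfy $\mathfrak{row}(T\stackrel{\mathsf{Sp}}{\leftarrow}bac)\stackrel{\mathsf{Sp}}{\sim}pbac$ and $\mathfrak{row}(T\stackrel{\mathsf{Sp}}{\leftarrow}bca)\stackrel{\mathsf{Sp}}{\sim}pbca$; and since $a<b<c$ forces $c\geq a+2$, the far-commutation relation gives $pbac\stackrel{\mathsf{Sp}}{\sim}pbca$, so the two reading words are $\stackrel{\mathsf{Sp}}{\sim}$-equivalent. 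However $\stackrel{\mathsf{Sp}}{\sim}$ does not determine the insertion tableau, as the example $26434\stackrel{\mathsf{Sp}}{\sim}42346$ (with distinct $P_{\mathsf{Sp}}$) already shows, so this does not yield the desired equality of tableaux and a direct computation is unavoidable.

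Next I would pin down $T'$ explicitly. There are exactly three possibilities according to $b$ versus $p$: if $b>p$ then appending gives the row $p\,b$; if $b<p$ with $b\equiv p\pmod 2$ then rule (3), through its main-diagonal clause, gives the row $b\,p$; and if $b<p$ with $b\not\equiv p\pmod 2$ then Lemma~\ref{lem:T1} forces $p=b+1$ and rule (2) gives the row $(b+1)\,(b+2)$. For each of these two-cell rows I would then insert $a$ and $c$ in both orders and compare the resulting tableaux (of at most four cells). The guiding observation is that $a$ is the smallest letter and $c$ the largest with $c\geq a+2$, so the cascade initiated by $a$ starts at the diagonal end of the row while $c$ is appended, or bumps, at the right end; tracking the (at most two-deep) bumping chains shows the two orders coincide.

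The main obstacle is the combinatorial branching forced by the non-classical clauses of the insertion rule: the parity-sensitive rule (2), which can divert an insertion from a row append into a column insertion; the Hecke equal-entry rule (1); and the main-diagonal clause of rule (3), which sends a bumped diagonal entry rightward instead of downward. Each of these splits the three base cases into sub-cases according to the parities of $a$ and $c$ relative to the entries of $T'$ and to whether $a$ or $c$ coincides with or falls between those entries. I would control this branching exactly as in the proof of Lemma~\ref{lem:T1}: in any sub-case in which the two orders would disagree, rewriting $pbac$ with the relations $\stackrel{\mathsf{Sp}}{\sim}$ produces a word with equal adjacent letters, contradicting that $pbac$ is an FPF-involution word. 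Thus the dangerous configurations never occur, and only the benign sub-cases remain to be checked, each by a short explicit insertion.
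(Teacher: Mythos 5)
Your proposal is correct and follows essentially the same route as the paper: the paper's proof of Lemma~\ref{lem:single_bac} is likewise a direct, exhaustive computation of the resulting tableaux in each admissible case (organized by the position of $p$ relative to $a$, $b$, $c$ and the relevant parities), with all remaining configurations dismissed because $pbac$ would fail to be an FPF-involution word. Your only organizational difference is to factor out the common first insertion of $b$ and case on the two-cell row $T\stackrel{\mathsf{Sp}}{\leftarrow}b$, which amounts to the same finite check.
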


\begin{proof}
By direct computation, we have

\begin{enumerate}
\item $p<a$.

\setlength{\unitlength}{12pt}
\begin{center}
\begin{picture}(12,2)

\put(0,0){\makebox(9,2){$T\stackrel{\mathsf{Sp}}{\leftarrow}bac=T\stackrel{\mathsf{Sp}}{\leftarrow}bca=$}}

\put(9,1){\line(0,1){1}}
\put(10,0){\line(0,1){2}}
\put(11,0){\line(0,1){2}}
\put(12,1){\line(0,1){1}}
\put(10,0){\line(1,0){1}}
\put(9,1){\line(1,0){3}}
\put(9,2){\line(1,0){3}}

\put(9,1){\makebox(1,1){$p$}}
\put(10,0){\makebox(1,1){$b$}}
\put(10,1){\makebox(1,1){$a$}}
\put(11,1){\makebox(1,1){$c$}}
\end{picture}.
\end{center}

\item $a<p<b$.

\begin{enumerate}
\item $p=a+1$.

\setlength{\unitlength}{12pt}
\begin{center}
\begin{picture}(15,1)
\put(0,0){\makebox(9,1){$T\stackrel{\mathsf{Sp}}{\leftarrow}bac=T\stackrel{\mathsf{Sp}}{\leftarrow}bca=$}}
\put(9,0){\line(0,1){1}}
\put(11,0){\line(0,1){1}}
\put(13,0){\line(0,1){1}}
\put(14,0){\line(0,1){1}}
\put(15,0){\line(0,1){1}}
\put(9,0){\line(1,0){6}}
\put(9,1){\line(1,0){6}}
\put(9,0){\makebox(2,1){$a+1$}}
\put(11,0){\makebox(2,1){$a+2$}}
\put(13,0){\makebox(1,1){$b$}}
\put(14,0){\makebox(1,1){$c$}}
\end{picture}.
\end{center}

\item $p\equiv a \pmod{2}$.

\setlength{\unitlength}{12pt}
\begin{center}
\begin{picture}(13,1)
\put(0,0){\makebox(9,1){$T\stackrel{\mathsf{Sp}}{\leftarrow}bac=T\stackrel{\mathsf{Sp}}{\leftarrow}bca=$}}
\put(9,0){\line(0,1){1}}
\put(10,0){\line(0,1){1}}
\put(11,0){\line(0,1){1}}
\put(12,0){\line(0,1){1}}
\put(13,0){\line(0,1){1}}
\put(9,0){\line(1,0){4}}
\put(9,1){\line(1,0){4}}
\put(9,0){\makebox(1,1){$a$}}
\put(10,0){\makebox(1,1){$p$}}
\put(11,0){\makebox(1,1){$b$}}
\put(12,0){\makebox(1,1){$c$}}
\end{picture}.
\end{center}

\end{enumerate}

\item $b<p<c$.

\begin{enumerate}
\item $b=a+1$ and $p=a+2$.

\setlength{\unitlength}{12pt}
\begin{center}
\begin{picture}(16,1)

\put(0,0){\makebox(9,1){$T\stackrel{\mathsf{Sp}}{\leftarrow}bac=T\stackrel{\mathsf{Sp}}{\leftarrow}bca=$}}

\put(9,0){\line(0,1){1}}
\put(11,0){\line(0,1){1}}
\put(13,0){\line(0,1){1}}
\put(15,0){\line(0,1){1}}
\put(16,0){\line(0,1){1}}
\put(9,0){\line(1,0){7}}
\put(9,1){\line(1,0){7}}

\put(9,0){\makebox(2,1){$a+1$}}
\put(11,0){\makebox(2,1){$a+2$}}
\put(13,0){\makebox(2,1){$a+3$}}
\put(15,0){\makebox(1,1){$c$}}
\end{picture}.
\end{center}

\item $b=a+1$ and $p>a+2$.

\setlength{\unitlength}{12pt}
\begin{center}
\begin{picture}(15,1)

\put(0,0){\makebox(9,1){$T\stackrel{\mathsf{Sp}}{\leftarrow}bac=T\stackrel{\mathsf{Sp}}{\leftarrow}bca=$}}

\put(9,0){\line(0,1){1}}
\put(11,0){\line(0,1){1}}
\put(13,0){\line(0,1){1}}
\put(14,0){\line(0,1){1}}
\put(15,0){\line(0,1){1}}
\put(9,0){\line(1,0){6}}
\put(9,1){\line(1,0){6}}

\put(9,0){\makebox(2,1){$a+1$}}
\put(11,0){\makebox(2,1){$a+2$}}
\put(13,0){\makebox(1,1){$p$}}
\put(14,0){\makebox(1,1){$c$}}
\end{picture}.
\end{center}

\item $a\equiv b \pmod{2}$.

\setlength{\unitlength}{12pt}
\begin{center}
\begin{picture}(13,1)

\put(0,0){\makebox(9,1){$T\stackrel{\mathsf{Sp}}{\leftarrow}bac=T\stackrel{\mathsf{Sp}}{\leftarrow}bca=$}}

\put(9,0){\line(0,1){1}}
\put(10,0){\line(0,1){1}}
\put(11,0){\line(0,1){1}}
\put(12,0){\line(0,1){1}}
\put(13,0){\line(0,1){1}}
\put(9,0){\line(1,0){4}}
\put(9,1){\line(1,0){4}}

\put(9,0){\makebox(1,1){$a$}}
\put(10,0){\makebox(1,1){$b$}}
\put(11,0){\makebox(1,1){$p$}}
\put(12,0){\makebox(1,1){$c$}}
\end{picture}.
\end{center}

\end{enumerate}

\item $c<p$.

\begin{enumerate}

\item $b=a+1$.
 
\setlength{\unitlength}{12pt}
\begin{center}
\begin{picture}(15,1)

\put(0,0){\makebox(9,1){$T\stackrel{\mathsf{Sp}}{\leftarrow}bac=T\stackrel{\mathsf{Sp}}{\leftarrow}bca=$}}

\put(9,0){\line(0,1){1}}
\put(11,0){\line(0,1){1}}
\put(13,0){\line(0,1){1}}
\put(14,0){\line(0,1){1}}
\put(15,0){\line(0,1){1}}
\put(9,0){\line(1,0){6}}
\put(9,1){\line(1,0){6}}

\put(9,0){\makebox(2,1){$a+1$}}
\put(11,0){\makebox(2,1){$a+2$}}
\put(13,0){\makebox(1,1){$p$}}
\put(14,0){\makebox(1,1){$c$}}
\end{picture}.
\end{center}

\item $a\equiv b \pmod{2}$.

\setlength{\unitlength}{12pt}
\begin{center}
\begin{picture}(13,1)

\put(0,0){\makebox(9,1){$T\stackrel{\mathsf{Sp}}{\leftarrow}bac=T\stackrel{\mathsf{Sp}}{\leftarrow}bca=$}}

\put(9,0){\line(0,1){1}}
\put(10,0){\line(0,1){1}}
\put(11,0){\line(0,1){1}}
\put(12,0){\line(0,1){1}}
\put(13,0){\line(0,1){1}}
\put(9,0){\line(1,0){4}}
\put(9,1){\line(1,0){4}}

\put(9,0){\makebox(1,1){$a$}}
\put(10,0){\makebox(1,1){$b$}}
\put(11,0){\makebox(1,1){$p$}}
\put(12,0){\makebox(1,1){$c$}}
\end{picture}.
\end{center}
 
\end{enumerate}

\end{enumerate}

The word $pbac$ is not an FPF-involution word in other cases.
\end{proof}

\begin{lem}
Let $T$ be a tableau 
\setlength{\unitlength}{12pt}
\begin{picture}(1,1)
\put(0,0){\line(0,1){1}}
\put(1,0){\line(0,1){1}}
\put(0,0){\line(1,0){1}}
\put(0,1){\line(1,0){1}}

\put(0,0){\makebox(1,1){$p$}}
\end{picture}
such that $pacb$ is an FPF-involution word, where $a<b<c$.
Then, $T\stackrel{\mathsf{Sp}}{\leftarrow}acb=T\stackrel{\mathsf{Sp}}{\leftarrow}cab$.
\end{lem}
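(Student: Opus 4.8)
The plan is to mimic the direct computation used for Lemma~\ref{lem:single_bac}, since $acb\sim cab$ is the companion Coxeter-Knuth relation to $bac\sim bca$ and the insertion is of the same local nature. First I would enumerate the admissible configurations of $p$ relative to $a<b<c$. Because a symplectic Hecke word begins with an even letter, $p$ is even; combined with the requirement that $pacb$ be an FPF-involution word, this cuts the candidates down to a short list organized by the position of $p$ (the ranges $p<a$, $a<p<b$, $b<p<c$, $c<p$, refined by the boundary adjacencies such as $p=a+1$ or $p=c+1$ and by the parity alternatives $p\equiv a\pmod 2$). As in the closing line of the proof of Lemma~\ref{lem:single_bac}, every configuration outside this list is discarded by producing, through the relations generating $\stackrel{\mathsf{Sp}}{\sim}$, a word with equal adjacent letters in the class of $pacb$.

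In each surviving case I would compute the two sides explicitly, namely $T\stackrel{\mathsf{Sp}}{\leftarrow}a\stackrel{\mathsf{Sp}}{\leftarrow}c\stackrel{\mathsf{Sp}}{\leftarrow}b$ and $T\stackrel{\mathsf{Sp}}{\leftarrow}c\stackrel{\mathsf{Sp}}{\leftarrow}a\stackrel{\mathsf{Sp}}{\leftarrow}b$, step by step from the forward insertion rules. A useful guiding observation is that the intermediate tableaux $T\stackrel{\mathsf{Sp}}{\leftarrow}ac$ and $T\stackrel{\mathsf{Sp}}{\leftarrow}ca$ need \emph{not} coincide: already when $p<a$ the first yields the single row $p\,a\,c$, whereas the second bumps $c$ off the corner into the cell $(2,2)$. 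The mechanism of the proof is therefore that the final insertion of the witness $b$ reconciles the two orders, bumping $c$ downward in the first order and merely extending the first row in the second, after which both tableaux agree. Since $T$ has one box, the final shape is a strict partition of $4$, hence either a single row or the shape $(3,1)$, so the bookkeeping stays short; it is controlled by Lemma~\ref{lem:T1}, which dictates how a letter below the corner enters (including the first-entry parity rule and the resulting first row), and by Lemma~\ref{lem:even}, which forces any entry bumped onto the diagonal cell $(2,2)$ to be even.

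The hard part will be the cases in which one of the single insertions triggers the first-entry parity rule, where the row is left unchanged and $a+2$ (resp.\ $c+2$) is sent two columns to the right; this is exactly where the two orders threaten to desynchronize in shape. There the FPF hypothesis is what makes the check tractable: Lemma~\ref{lem:T1} pins the corner to $a+1$ and forces $T_{(1,3)}\ge a+4$, so the letters that move and the cells they occupy are completely determined and independent of whether $a$ or $c$ was inserted first. Matching the two computations in every admissible case then yields $T\stackrel{\mathsf{Sp}}{\leftarrow}acb=T\stackrel{\mathsf{Sp}}{\leftarrow}cab$, which is the remaining ingredient (alongside Lemmas~\ref{lem:single_bac} and the braid case) needed to conclude the proof of Theorem~\ref{thm:CK}.
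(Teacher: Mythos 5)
Your plan coincides with the paper's proof: the lemma is established there by exactly the same exhaustive direct computation, organized by the position of $p$ in the ranges $p<a$, $a<p<b$, $b<p<c$, $c<p$ with the boundary/parity refinements you list, discarding the remaining configurations because $pacb$ fails to be an FPF-involution word, and your observation that $T\stackrel{\mathsf{Sp}}{\leftarrow}ac$ and $T\stackrel{\mathsf{Sp}}{\leftarrow}ca$ differ until the final insertion of $b$ reconciles them is precisely what the displayed computations show (in every admissible case the common result has shape $(3,1)$). The only quibble is cosmetic: when rule (2) fires, the letter $a+2$ (resp.\ $c+2$) is column-inserted into the \emph{next} column to the right of the first entry, not two columns over.
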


\begin{proof}

By direct computation, we have

\begin{enumerate}

\item $p<a$.

\setlength{\unitlength}{12pt}
\begin{center}
\begin{picture}(12,2)

\put(0,0){\makebox(9,2){$T\stackrel{\mathsf{Sp}}{\leftarrow}acb=T\stackrel{\mathsf{Sp}}{\leftarrow}cab=$}}

\put(9,1){\line(0,1){1}}
\put(10,0){\line(0,1){2}}
\put(11,0){\line(0,1){2}}
\put(12,1){\line(0,1){1}}
\put(10,0){\line(1,0){1}}
\put(9,1){\line(1,0){3}}
\put(9,2){\line(1,0){3}}

\put(9,1){\makebox(1,1){$p$}}
\put(10,0){\makebox(1,1){$c$}}
\put(10,1){\makebox(1,1){$a$}}
\put(11,1){\makebox(1,1){$b$}}
\end{picture}.
\end{center}

\item $a<p<b$.

\begin{enumerate}

\item $p=a+1$.

\setlength{\unitlength}{12pt}
\begin{center}
\begin{picture}(14,2)

\put(0,0){\makebox(9,2){$T\stackrel{\mathsf{Sp}}{\leftarrow}acb=T\stackrel{\mathsf{Sp}}{\leftarrow}cab=$}}

\put(9,1){\line(0,1){1}}
\put(11,0){\line(0,1){2}}
\put(13,0){\line(0,1){2}}
\put(14,1){\line(0,1){1}}
\put(11,0){\line(1,0){2}}
\put(9,1){\line(1,0){5}}
\put(9,2){\line(1,0){5}}

\put(9,1){\makebox(2,1){$a+1$}}
\put(11,0){\makebox(2,1){$c$}}
\put(11,1){\makebox(2,1){$a+2$}}
\put(13,1){\makebox(1,1){$b$}}
\end{picture}.
\end{center}

\item $p\equiv a \pmod{2}$.

\setlength{\unitlength}{12pt}
\begin{center}
\begin{picture}(12,2)

\put(0,0){\makebox(9,2){$T\stackrel{\mathsf{Sp}}{\leftarrow}acb=T\stackrel{\mathsf{Sp}}{\leftarrow}cab=$}}

\put(9,1){\line(0,1){1}}
\put(10,0){\line(0,1){2}}
\put(11,0){\line(0,1){2}}
\put(12,1){\line(0,1){1}}
\put(10,0){\line(1,0){1}}
\put(9,1){\line(1,0){3}}
\put(9,2){\line(1,0){3}}

\put(9,1){\makebox(1,1){$a$}}
\put(10,0){\makebox(1,1){$c$}}
\put(10,1){\makebox(1,1){$p$}}
\put(11,1){\makebox(1,1){$b$}}
\end{picture}.
\end{center}

\end{enumerate}

\item $b<p<c$.

\setlength{\unitlength}{12pt}
\begin{center}
\begin{picture}(12,2)

\put(0,0){\makebox(9,2){$T\stackrel{\mathsf{Sp}}{\leftarrow}acb=T\stackrel{\mathsf{Sp}}{\leftarrow}cab=$}}

\put(9,1){\line(0,1){1}}
\put(10,0){\line(0,1){2}}
\put(11,0){\line(0,1){2}}
\put(12,1){\line(0,1){1}}
\put(10,0){\line(1,0){1}}
\put(9,1){\line(1,0){3}}
\put(9,2){\line(1,0){3}}

\put(9,1){\makebox(1,1){$a$}}
\put(10,0){\makebox(1,1){$p$}}
\put(10,1){\makebox(1,1){$b$}}
\put(11,1){\makebox(1,1){$c$}}
\end{picture}.
\end{center}

\item $c<p$.

\begin{enumerate}

\item $p=c+1$.

\setlength{\unitlength}{12pt}
\begin{center}
\begin{picture}(14,2)

\put(0,0){\makebox(9,2){$T\stackrel{\mathsf{Sp}}{\leftarrow}acb=T\stackrel{\mathsf{Sp}}{\leftarrow}cab=$}}

\put(9,1){\line(0,1){1}}
\put(10,0){\line(0,1){2}}
\put(12,0){\line(0,1){2}}
\put(14,1){\line(0,1){1}}
\put(10,0){\line(1,0){2}}
\put(9,1){\line(1,0){5}}
\put(9,2){\line(1,0){5}}

\put(9,1){\makebox(1,1){$a$}}
\put(10,0){\makebox(2,1){$c+1$}}
\put(10,1){\makebox(2,1){$b$}}
\put(12,1){\makebox(2,1){$c+2$}}
\end{picture}.
\end{center}

\item $p\equiv c \pmod{2}$.

\setlength{\unitlength}{12pt}
\begin{center}
\begin{picture}(12,2)

\put(0,0){\makebox(9,2){$T\stackrel{\mathsf{Sp}}{\leftarrow}acb=T\stackrel{\mathsf{Sp}}{\leftarrow}cab=$}}

\put(9,1){\line(0,1){1}}
\put(10,0){\line(0,1){2}}
\put(11,0){\line(0,1){2}}
\put(12,1){\line(0,1){1}}
\put(10,0){\line(1,0){1}}
\put(9,1){\line(1,0){3}}
\put(9,2){\line(1,0){3}}

\put(9,1){\makebox(1,1){$a$}}
\put(10,0){\makebox(1,1){$c$}}
\put(10,1){\makebox(1,1){$b$}}
\put(11,1){\makebox(1,1){$p$}}
\end{picture}.
\end{center}

\end{enumerate}

\end{enumerate}

The word $pacb$ is not an FPF-involution word in other cases.
\end{proof}

\begin{lem}
Let $T$ be a tableau 
\setlength{\unitlength}{12pt}
\begin{picture}(1,1)
\put(0,0){\line(0,1){1}}
\put(1,0){\line(0,1){1}}
\put(0,0){\line(1,0){1}}
\put(0,1){\line(1,0){1}}

\put(0,0){\makebox(1,1){$p$}}
\end{picture}
such that $pa(a+1)a$ is an FPF-involution word.
Then, $T\stackrel{\mathsf{Sp}}{\leftarrow}a(a+1)a=T\stackrel{\mathsf{Sp}}{\leftarrow}(a+1)a(a+1)$.
\end{lem}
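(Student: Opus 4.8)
The plan is to prove the identity by direct computation in the style of Lemma~\ref{lem:single_bac} and the preceding lemma, performing a case analysis on the position and parity of $p$ relative to $a$ and $a+1$, and in each surviving case inserting both $a(a+1)a$ and $(a+1)a(a+1)$ into the one-box tableau $T$ and checking that the two resulting tableaux coincide.

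First I would prune the cases. Since $\mathfrak{row}(T)a(a+1)a=pa(a+1)a$ is an FPF-involution word, it begins with an even letter, so $p$ is even. The value $p=a$ is impossible (it would create the adjacent pair $aa$), and $p=a+1$ is excluded as well, since then $pa(a+1)a=(a+1)a(a+1)a\stackrel{\mathsf{Sp}}{\sim}a(a+1)aa$ is not reduced. For $p>a+1$ I would apply Lemma~\ref{lem:T1} to the length-two prefix $pa$, which is itself an FPF-involution word: with $a<p$ and $a\not\equiv p\pmod 2$ the lemma forces $p=a+1$, already excluded, so necessarily $a\equiv p\pmod 2$, i.e. $a$ is even; a short computation with the relations $\stackrel{\mathsf{Sp}}{\sim}$ then rules out $p\geq a+4$ (the trailing $a$ turns out to be redundant, so the word is not reduced). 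Thus only two cases survive: $p<a$, and $p=a+2$ with $a$ even.

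The case $p<a$ is routine and parity-independent: inserting $a$ and then $a+1$ merely appends them (no entry of the current row is $\geq a$), and the last $a$ meets an equal entry, so case~(1) of the insertion rules bumps $a+1$ to the second row, giving the first row $p\,a\,(a+1)$ over a single box $(a+1)$ at $(2,2)$; inserting $(a+1)a(a+1)$ reaches the same tableau, the middle $a$ bumping the off-diagonal $a+1$ downward before the final $a+1$ is appended. The delicate case, and the main obstacle, is $p=a+2$ with $a$ even. Here the first letter of $a(a+1)a$ already hits the diagonal entry $p$ with $a\equiv p$, so case~(3) applies and, because $p$ lies on the main diagonal, the bumped $p$ is routed into a \emph{column} insertion; the remaining letters then trigger the parity clause (case~(2)), which inserts $a+3$ two columns over. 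By contrast $(a+1)a(a+1)$ triggers the parity clause immediately on its first letter and performs the diagonal column insertion only later. The two words therefore pass through genuinely different intermediate tableaux, so the real work is to track these off-diagonal column insertions and the ``$+2$'' jumps carefully and verify that both reconcile to the common tableau whose first row is $a\,(a+1)\,(a+3)$ over a box $(a+2)$ at $(2,2)$.

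I expect the bookkeeping in the $p=a+2$ case to be the crux, together with the justification that no other values of $p$ occur; once the case list is pinned down, each insertion is finite and mechanical, exactly as in the tables of Lemma~\ref{lem:single_bac} and the preceding lemma.
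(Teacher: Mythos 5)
Your proposal is correct and follows essentially the same route as the paper: a direct case analysis on $p$, reducing to the two surviving cases $p<a$ (which the hypothesis in fact forces to be $p=a-1$) and $p=a+2$, and in each case computing both insertions to the common tableaux with first rows $p\,a\,(a+1)$ and $a\,(a+1)\,(a+3)$ respectively, exactly as in the paper's tables. Your additional justification for excluding the other values of $p$ (evenness of the first letter, Lemma~\ref{lem:T1}, and the $\stackrel{\mathsf{Sp}}{\sim}$ computations) is sound and in fact more detailed than the paper, which merely asserts that $pa(a+1)a$ is not an FPF-involution word in the remaining cases.
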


\begin{proof}

By direct computation, we have

\begin{enumerate}

\item $p=a-1$.

\setlength{\unitlength}{12pt}
\begin{center}
\begin{picture}(21,2)

\put(0,0){\makebox(15,2){$T\stackrel{\mathsf{Sp}}{\leftarrow}a(a+1)a=T\stackrel{\mathsf{Sp}}{\leftarrow}(a+1)a(a+1)=$}}

\put(15,1){\line(0,1){1}}
\put(17,0){\line(0,1){2}}
\put(19,0){\line(0,1){2}}
\put(21,1){\line(0,1){1}}
\put(17,0){\line(1,0){2}}
\put(15,1){\line(1,0){6}}
\put(15,2){\line(1,0){6}}

\put(15,1){\makebox(2,1){$a-1$}}
\put(17,0){\makebox(2,1){$a+1$}}
\put(17,1){\makebox(2,1){$a$}}
\put(19,1){\makebox(2,1){$a+1$}}
\end{picture}.
\end{center}

\item $p=a+2$.

\setlength{\unitlength}{12pt}
\begin{center}
\begin{picture}(20,2)

\put(0,0){\makebox(15,2){$T\stackrel{\mathsf{Sp}}{\leftarrow}a(a+1)a=T\stackrel{\mathsf{Sp}}{\leftarrow}(a+1)a(a+1)=$}}

\put(15,1){\line(0,1){1}}
\put(16,0){\line(0,1){2}}
\put(18,0){\line(0,1){2}}
\put(20,1){\line(0,1){1}}
\put(16,0){\line(1,0){2}}
\put(15,1){\line(1,0){5}}
\put(15,2){\line(1,0){5}}

\put(15,1){\makebox(1,1){$a$}}
\put(16,0){\makebox(2,1){$a+2$}}
\put(16,1){\makebox(2,1){$a+1$}}
\put(18,1){\makebox(2,1){$a+3$}}
\end{picture}.
\end{center}

\end{enumerate}

The word $pa(a+1)a$ is not an FPF-involution word in other cases.
\end{proof}

\begin{lem} \label{lem:bac}
Let $T$ be an increasing shifted tableau of shape $\lambda=(\lambda_{1},\lambda_{2},\ldots)$ such that $\mathfrak{row}(T)bac$ is an FPF-involution word, where $a<b<c$.
Then, $T\stackrel{\mathsf{Sp}}{\leftarrow}bac=T\stackrel{\mathsf{Sp}}{\leftarrow}bca$.
\end{lem}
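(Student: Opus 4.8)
The plan is to induct on the number of rows $l(\lambda)$ of $T$, reducing the statement to an analysis of how the three letters propagate through the first row. Both words $bac$ and $bca$ begin by inserting $b$ into the first row, so it suffices to track what each subsequently does. Writing $a<b<c$, the letter $a$ enters the first row weakly to the left of the cell disturbed by $b$, while $c$ enters weakly to its right; consequently $a$ and $c$ act on essentially disjoint segments of the row, and I expect the two insertion orders to leave the first row in the same state. The local behaviour on these segments is exactly what Lemma~\ref{lem:single_bac} records for a one-cell tableau, and Lemma~\ref{lem:T1} controls how a letter smaller than the corner entry rewrites the first row; together they should let me read off the final first row and the letters ejected downward without re-deriving the bumping rules.

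The heart of the argument is to show that the first-row step converts the relation $bac\sim bca$ into a relation of the same shape one row lower. Concretely, if $b,a,c$ eject $\beta,\alpha,\gamma$ respectively into the second row, then in the $bac$ order the ejected word is $\beta\alpha\gamma$ and in the $bca$ order it is $\beta\gamma\alpha$, and I expect the ejected values to again satisfy $\alpha<\beta<\gamma$, so that $\beta$ separates $\alpha$ and $\gamma$ in the second row just as $b$ separated $a$ and $c$ in the first. The portion of $T$ below the first row is itself an increasing shifted tableau whose main diagonal sits at $(2,2),(3,3),\dots$, and by the compatibility of insertion with $\stackrel{\mathsf{Sp}}{\sim}$ the concatenation of its reading word with $\beta\alpha\gamma$ is again an FPF-involution word; hence the inductive hypothesis applies to $\beta\alpha\gamma\sim\beta\gamma\alpha$ and finishes the step. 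Degenerate first-row outcomes---where one of the three letters is appended at the end of the row and ejects nothing, so that the second-row word has fewer than three letters---either make the two orders agree outright or reduce to a shorter instance already covered.

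The main obstacle is the interaction with the main diagonal. When the insertion into the first row overwrites the corner entry $T_{(1,1)}$ in case~(3) of the rules, the bumped letter is pushed into the next column rather than into the row below, so the clean ``eject to the row below and recurse'' picture breaks down and the row-by-row induction no longer applies directly. Lemma~\ref{lem:even} guarantees that diagonal entries are even, which pins down the relevant parities, but I still expect to treat the diagonal-triggered column insertions---and, alongside them, the parity-sensitive special cases~(1)--(3) of the insertion rule and the first-entry rule of Lemma~\ref{lem:T1}---by an explicit case analysis paralleling that of Lemma~\ref{lem:single_bac}. Verifying in each such case that the two orders still produce identical tableaux, and that the emitted data retains the $bac\sim bca$ form needed to continue, is where the bulk of the (lengthy) work will lie.
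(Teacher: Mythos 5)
Your plan follows essentially the same route as the paper's proof: the paper's Cases 1--4 are your degenerate and parity-sensitive first-row outcomes (controlled by Lemmas~\ref{lem:T1} and \ref{lem:even}), and its Case 5 is exactly your inductive step --- the three letters eject letters $q,p,r$ with $p<q<r$ into the part of $T$ below the first row in the orders $qpr$ versus $qrp$, and the induction bottoms out at the single-box case of Lemma~\ref{lem:single_bac}. The obstacle you flag (bumped diagonal entries triggering column insertions that propagate back into the first row) is precisely what the paper's \emph{insertion to the top} analysis resolves, by showing that the sequence of initial column-inserting letters is identical in both insertion orders and then checking that the resulting first-row configurations coincide.
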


\begin{proof}

Let $T_{1}$ be the first row of $T$ and $T^{\prime}$ be a portion of $T$ below the first row.

\begin{flushleft}
\textbf{Case 1:} $T_{(1,\lambda_1)}<a$.
\end{flushleft}

\setlength{\unitlength}{12pt}
\begin{center}
\begin{picture}(16,3)
\put(0,1){\makebox(8,1){$T\stackrel{\mathsf{Sp}}{\leftarrow}bac=T\stackrel{\mathsf{Sp}}{\leftarrow}bca=$}}
\put(8,2){\line(0,1){1}}
\put(9,1){\line(0,1){1}}
\put(12,2){\line(0,1){1}}
\put(13,2){\line(0,1){1}}
\put(14,2){\line(0,1){1}}
\put(9,1){\line(1,0){1}}
\put(8,2){\line(1,0){6}}
\put(8,3){\line(1,0){6}}

\put(14,1){\makebox(2,1){$\leftarrow b$}}

\put(10,0){\makebox(1,1){$\ddots$}}
\put(11,0){\makebox(1,2){$T^{\prime}$}}
\put(8,2){\makebox(4,1){$T_{1}$}}
\put(12,2){\makebox(1,1){$a$}}
\put(13,2){\makebox(1,1){$c$}}
\end{picture}.
\end{center}

\begin{flushleft}
\textbf{Case 2:} $T_{(1,\lambda_1)}=a$.
\end{flushleft}

\setlength{\unitlength}{12pt}
\begin{center}
\begin{picture}(17,3)
\put(0,1){\makebox(8,1){$T\stackrel{\mathsf{Sp}}{\leftarrow}bac=T\stackrel{\mathsf{Sp}}{\leftarrow}bca=$}}
\put(8,2){\line(0,1){1}}
\put(9,1){\line(0,1){1}}
\put(12,2){\line(0,1){1}}
\put(13,2){\line(0,1){1}}
\put(14,2){\line(0,1){1}}
\put(9,1){\line(1,0){1}}
\put(8,2){\line(1,0){6}}
\put(8,3){\line(1,0){6}}

\put(14,1){\makebox(3,1){$\leftarrow a+1$}}

\put(10,0){\makebox(1,1){$\ddots$}}
\put(11,0){\makebox(1,2){$T^{\prime}$}}
\put(8,2){\makebox(4,1){$T_{1}$}}
\put(12,2){\makebox(1,1){$b$}}
\put(13,2){\makebox(1,1){$c$}}
\end{picture}.
\end{center}

\begin{flushleft}
\textbf{Case 3:} $a<T_{(1,\lambda_{1})}<b$.
\end{flushleft}

\begin{enumerate}

\item $T_{(1,1)}\leq a$.

Let $a^{\prime}$ be the smallest entry in $T_{1}$ such that $a< a^{\prime}$ (if it exists).
Then,

\setlength{\unitlength}{12pt}
\begin{center}
\begin{picture}(16,3)
\put(0,1){\makebox(8,1){$T\stackrel{\mathsf{Sp}}{\leftarrow}bac=T\stackrel{\mathsf{Sp}}{\leftarrow}bca=$}}
\put(8,2){\line(0,1){1}}
\put(9,1){\line(0,1){1}}
\put(12,2){\line(0,1){1}}
\put(13,2){\line(0,1){1}}
\put(14,2){\line(0,1){1}}
\put(9,1){\line(1,0){1}}
\put(8,2){\line(1,0){6}}
\put(8,3){\line(1,0){6}}

\put(14,1){\makebox(2,1){$\leftarrow \Tilde{a}$}}

\put(10,0){\makebox(1,1){$\ddots$}}
\put(11,0){\makebox(1,2){$T^{\prime}$}}
\put(8,2){\makebox(4,1){$T_{1}^{\prime}$}}
\put(12,2){\makebox(1,1){$b$}}
\put(13,2){\makebox(1,1){$c$}}
\end{picture},
\end{center}
where $\Tilde{a}$ is either $a+1$ or $a^{\prime}$ and $T_{1}^{\prime}=T_{1}$ if $\Tilde{a}=a+1$ or $T_{1}^{\prime}$ is obtained from $T_{1}$ by replacing $a^{\prime}$ by $a$ if $\Tilde{a}=a^{\prime}$.

\item $T_{(1,1)}>a$ and $T_{(1,1)}\not\equiv a \pmod{2}$.

Let us write $T_{1}$ as
\setlength{\unitlength}{12pt}
\begin{picture}(4,1)
\put(0,0){\line(0,1){1}}
\put(1,0){\line(0,1){1}}
\put(2,0){\line(0,1){1}}
\put(4,0){\line(0,1){1}}
\put(0,0){\line(1,0){4}}
\put(0,1){\line(1,0){4}}
\put(0,0){\makebox(1,1){$x$}}
\put(1,0){\makebox(1,1){$y$}}
\put(2,0){\makebox(2,1){$A$}}
\end{picture}.
By Lemma~\ref{lem:T1}, $x=a+1$ and 

\setlength{\unitlength}{12pt}
\begin{center}
\begin{picture}(16,3)
\put(0,1){\makebox(8,1){$T\stackrel{\mathsf{Sp}}{\leftarrow}bac=T\stackrel{\mathsf{Sp}}{\leftarrow}bca=$}}
\put(8,2){\line(0,1){1}}
\put(10,1){\line(0,1){2}}
\put(12,2){\line(0,1){1}}
\put(14,2){\line(0,1){1}}
\put(15,2){\line(0,1){1}}
\put(16,2){\line(0,1){1}}
\put(10,1){\line(1,0){2}}
\put(8,2){\line(1,0){8}}
\put(8,3){\line(1,0){8}}
\put(12,0){\makebox(1,1){$\ddots$}}
\put(13,0){\makebox(1,2){$T^{\prime}$}}
\put(8,2){\makebox(2,1){$a+1$}}
\put(10,2){\makebox(2,1){$a+2$}}
\put(12,2){\makebox(2,1){$A^{\prime}$}}
\put(14,2){\makebox(1,1){$b$}}
\put(15,2){\makebox(1,1){$c$}}
\end{picture},
\end{center}
where 
\setlength{\unitlength}{12pt}
\begin{picture}(2,1)
\put(0,0){\line(0,1){1}}
\put(2,0){\line(0,1){1}}
\put(0,0){\line(1,0){2}}
\put(0,1){\line(1,0){2}}
\put(0,0){\makebox(2,1){$A^{\prime}$}}
\end{picture} is 
\setlength{\unitlength}{12pt}
\begin{picture}(4,1)
\put(0,0){\line(0,1){1}}
\put(2,0){\line(0,1){1}}
\put(4,0){\line(0,1){1}}
\put(0,0){\line(1,0){4}}
\put(0,1){\line(1,0){4}}
\put(0,0){\makebox(2,1){$a+3$}}
\put(2,0){\makebox(2,1){$A$}}
\end{picture} if $y=a+2$ and 
\setlength{\unitlength}{12pt}
\begin{picture}(3,1)
\put(0,0){\line(0,1){1}}
\put(1,0){\line(0,1){1}}
\put(3,0){\line(0,1){1}}
\put(0,0){\line(1,0){3}}
\put(0,1){\line(1,0){3}}
\put(0,0){\makebox(1,1){$y$}}
\put(1,0){\makebox(2,1){$A$}}
\end{picture} if $y>a+2$.

\item $T_{(1,1)}>a$ and $T_{(1,1)}\equiv a \pmod{2}$.

Let us write $T_{1}$ as
\setlength{\unitlength}{12pt}
\begin{picture}(3,1)
\put(0,0){\line(0,1){1}}
\put(1,0){\line(0,1){1}}
\put(3,0){\line(0,1){1}}
\put(0,0){\line(1,0){3}}
\put(0,1){\line(1,0){3}}
\put(0,0){\makebox(1,1){$x$}}
\put(1,0){\makebox(2,1){$A$}}
\end{picture}.
Then,

\setlength{\unitlength}{12pt}
\begin{center}
\begin{picture}(14,3)
\put(0,1){\makebox(8,1){$T\stackrel{\mathsf{Sp}}{\leftarrow}bac=T\stackrel{\mathsf{Sp}}{\leftarrow}bca=$}}
\put(8,2){\line(0,1){1}}
\put(9,1){\line(0,1){2}}
\put(10,2){\line(0,1){1}}
\put(12,2){\line(0,1){1}}
\put(13,2){\line(0,1){1}}
\put(14,2){\line(0,1){1}}
\put(9,1){\line(1,0){1}}
\put(8,2){\line(1,0){6}}
\put(8,3){\line(1,0){6}}
\put(10,0){\makebox(1,1){$\ddots$}}
\put(11,0){\makebox(1,2){$T^{\prime}$}}
\put(8,2){\makebox(1,1){$a$}}
\put(9,2){\makebox(1,1){$x$}}
\put(10,2){\makebox(2,1){$A$}}
\put(12,2){\makebox(1,1){$b$}}
\put(13,2){\makebox(1,1){$c$}}
\end{picture},
\end{center}
  
\end{enumerate}

\begin{flushleft}
\textbf{Case 4:} $b\leq T_{(1,\lambda_{1})}< c$.
\end{flushleft}

\begin{enumerate}

\item $T_{(1,1)}=b$ and $a\not\equiv b \pmod{2}$.

\setlength{\unitlength}{12pt}
\begin{center}
\begin{picture}(13,3)
\put(0,1){\makebox(5,1){$T\stackrel{\mathsf{Sp}}{\leftarrow}b=\Tilde{T}=$}}
\put(5,2){\line(0,1){1}}
\put(6,1){\line(0,1){2}}
\put(10,2){\line(0,1){1}}
\put(6,1){\line(1,0){1}}
\put(5,2){\line(1,0){5}}
\put(5,3){\line(1,0){5}}
\put(10,1){\makebox(3,1){$\leftarrow b+1$}}
\put(7,0){\makebox(1,1){$\ddots$}}
\put(8,0){\makebox(1,2){$T^{\prime}$}}
\put(5,2){\makebox(1,1){$b$}}
\end{picture}.
\end{center}

\setlength{\unitlength}{12pt}
\begin{center}
\begin{picture}(11,4)
\put(0,2){\makebox(3,1){$\Tilde{T}\stackrel{\mathsf{Sp}}{\leftarrow}a=$}}
\put(3,3){\line(0,1){1}}
\put(4,2){\line(0,1){2}}
\put(8,3){\line(0,1){1}}
\put(4,2){\line(1,0){1}}
\put(3,3){\line(1,0){5}}
\put(3,4){\line(1,0){5}}
\put(8,2){\makebox(3,1){$\leftarrow b+1$}}
\put(4,1){\makebox(1,1){$\uparrow$}}
\put(3,0){\makebox(3,1){$a+2$}}
\put(5,1){\makebox(1,1){$\ddots$}}
\put(6,1){\makebox(1,2){$T^{\prime}$}}
\put(3,3){\makebox(1,1){$b$}}
\end{picture},
\end{center}
where the row insertion of $b+1$ is followed by the column insertion of $a+2$.
From this configuration, we have that $b=a+1$.
Let us write the first row of $T$ as

\setlength{\unitlength}{12pt}
\begin{center}
\begin{picture}(8,2)
\put(0,0){\line(0,1){1}}
\put(2,0){\line(0,1){1}}
\put(4,0){\line(0,1){1}}
\put(6,0){\line(0,1){1}}
\put(8,0){\line(0,1){1}}
\put(0,0){\line(1,0){8}}
\put(0,1){\line(1,0){8}}
\put(0,0){\makebox(2,1){$a+1$}}
\put(2,0){\makebox(2,1){$\cdots$}}
\put(4,0){\makebox(2,1){$a+l$}}
\put(6,0){\makebox(2,1){$A$}}
\put(4,1){\makebox(2,1){$\small{l}$}}
\end{picture},
\end{center}
where the leftmost entry of $A$ is strictly greater than $a+l+1$.
Then,

\setlength{\unitlength}{12pt}
\begin{center}
\begin{picture}(22,3)
\put(0,1){\makebox(8,1){$T\stackrel{\mathsf{Sp}}{\leftarrow}bac=T\stackrel{\mathsf{Sp}}{\leftarrow} bca=$}}
\put(8,2){\line(0,1){1}}
\put(10,1){\line(0,1){2}}
\put(12,2){\line(0,1){1}}
\put(16,2){\line(0,1){1}}
\put(18,2){\line(0,1){1}}
\put(19,2){\line(0,1){1}}
\put(10,1){\line(1,0){1}}
\put(8,2){\line(1,0){11}}
\put(8,3){\line(1,0){11}}
\put(19,1){\makebox(3,1){$\leftarrow a+2$}}
\put(11,0){\makebox(1,1){$\ddots$}}
\put(12,0){\makebox(1,2){$T^{\prime}$}}
\put(8,2){\makebox(2,1){$a+1$}}
\put(10,2){\makebox(2,1){$\cdots$}}
\put(12,2){\makebox(4,1){$a+l+1$}}
\put(16,2){\makebox(2,1){$A$}}
\put(18,2){\makebox(1,1){$c$}}
\end{picture}.
\end{center}

\item $T_{(1,1)}=b$ and $a\equiv b \pmod{2}$.

\setlength{\unitlength}{12pt}
\begin{center}
\begin{picture}(16,3)
\put(0,1){\makebox(8,1){$T\stackrel{\mathsf{Sp}}{\leftarrow}bac=T\stackrel{\mathsf{Sp}}{\leftarrow}bca=$}}
\put(8,2){\line(0,1){1}}
\put(9,1){\line(0,1){2}}
\put(12,2){\line(0,1){1}}
\put(13,2){\line(0,1){1}}
\put(9,1){\line(1,0){1}}
\put(8,2){\line(1,0){5}}
\put(8,3){\line(1,0){5}}
\put(13,1){\makebox(3,1){$\leftarrow b+1$}}
\put(10,0){\makebox(1,1){$\ddots$}}
\put(11,0){\makebox(1,2){$T^{\prime}$}}
\put(8,2){\makebox(1,1){$a$}}
\put(9,2){\makebox(3,1){$T_{1}$}}
\put(12,2){\makebox(1,1){$c$}}
\end{picture}.
\end{center}

\item $T_{(1,1)}>b$ and $T_{(1,1)}\not\equiv b \pmod{2}$.

By Lemma~\ref{lem:T1}, $T_{(1,1)}=b+1$.
It is easy to see that $a\not\equiv b \pmod{2}$ and 

\setlength{\unitlength}{12pt}
\begin{center}
\begin{picture}(16,3)
\put(0,1){\makebox(8,1){$T\stackrel{\mathsf{Sp}}{\leftarrow}bac=T\stackrel{\mathsf{Sp}}{\leftarrow}bca=$}}
\put(8,2){\line(0,1){1}}
\put(9,1){\line(0,1){2}}
\put(11,2){\line(0,1){1}}
\put(13,2){\line(0,1){1}}
\put(15,2){\line(0,1){1}}
\put(16,2){\line(0,1){1}}
\put(9,1){\line(1,0){2}}
\put(8,2){\line(1,0){8}}
\put(8,3){\line(1,0){8}}
\put(11,0){\makebox(1,1){$\ddots$}}
\put(12,0){\makebox(1,2){$T^{\prime}$}}
\put(8,2){\makebox(1,1){$a$}}
\put(9,2){\makebox(2,1){$b+1$}}
\put(11,2){\makebox(2,1){$b+2$}}
\put(13,2){\makebox(2,1){$\cdots$}}
\put(15,2){\makebox(1,1){$c$}}
\end{picture}.
\end{center}

\item $T_{(1,1)}>b$ and $T_{(1,1)}\equiv b \pmod{2}$.

It is easy to see that $a\equiv b \pmod{2}$ and

\setlength{\unitlength}{12pt}
\begin{center}
\begin{picture}(14,3)
\put(0,1){\makebox(8,1){$T\stackrel{\mathsf{Sp}}{\leftarrow}bac=T\stackrel{\mathsf{Sp}}{\leftarrow}bca=$}}
\put(8,2){\line(0,1){1}}
\put(9,2){\line(0,1){1}}
\put(9,1){\line(0,1){2}}
\put(10,2){\line(0,1){1}}
\put(13,2){\line(0,1){1}}
\put(14,2){\line(0,1){1}}
\put(9,1){\line(1,0){1}}
\put(8,2){\line(1,0){6}}
\put(8,3){\line(1,0){6}}
%\put(13,1){\framebox(3,1){$\leftarrow b+1$}}
\put(10,0){\makebox(1,1){$\ddots$}}
\put(11,0){\makebox(1,2){$T^{\prime}$}}
\put(8,2){\makebox(1,1){$a$}}
\put(9,2){\makebox(1,1){$b$}}
\put(10,2){\makebox(3,1){$T_{1}$}}
\put(13,2){\makebox(1,1){$c$}}
\end{picture}.
\end{center}

\end{enumerate}

\begin{flushleft}
\textbf{Case 5:} $T_{(1,\lambda_{1})}\geq c$.
\end{flushleft}

Let $b^{\prime}$ be the smallest entry in $T_{1}$ such that $b< b^{\prime}$.
Let us denote $T\stackrel{\mathsf{Sp}}{\leftarrow}b$ by $\Tilde{T}$ and let $a^{\prime}$ (resp. $c^{\prime}$) be the smallest entry in $(\Tilde{T})_{1}$, the first row of $\Tilde{T}$, such that $a< a^{\prime}$ (resp. $c< c^{\prime}$).
Since $b$ exists in $(\Tilde{T})_{1}$, $a^{\prime}<c^{\prime}$.

\setlength{\unitlength}{12pt}
\begin{center}
\begin{picture}(11,3)
\put(0,1){\makebox(4,1){$T\stackrel{\mathsf{Sp}}{\leftarrow}bac=$}}
\put(4,2){\line(0,1){1}}
\put(5,1){\line(0,1){1}}
\put(9,2){\line(0,1){1}}
\put(5,1){\line(1,0){1}}
\put(4,2){\line(1,0){5}}
\put(4,3){\line(1,0){5}}
\put(9,1){\makebox(2,1){$\leftarrow qpr$}}
\put(6,0){\makebox(1,1){$\ddots$}}
\put(7,0){\makebox(1,2){$T^{\prime}$}}
\put(4,2){\makebox(5,1){$T_{1}^{\prime}$}}
\end{picture},
\end{center}
and
\setlength{\unitlength}{12pt}
\begin{center}
\begin{picture}(11,3)
\put(0,1){\makebox(4,1){$T\stackrel{\mathsf{Sp}}{\leftarrow}bca=$}}
\put(4,2){\line(0,1){1}}
\put(5,1){\line(0,1){1}}
\put(9,2){\line(0,1){1}}
\put(5,1){\line(1,0){1}}
\put(4,2){\line(1,0){5}}
\put(4,3){\line(1,0){5}}
\put(9,1){\makebox(2,1){$\leftarrow qrp$}}
\put(6,0){\makebox(1,1){$\ddots$}}
\put(7,0){\makebox(1,2){$T^{\prime}$}}
\put(4,2){\makebox(5,1){$T_{1}^{\prime}$}}
\end{picture},
\end{center}
where $p=a^{\prime}$ or $a+1$, $q=b^{\prime}$ or $b+1$, $r=c^{\prime}$ or $c+1$ and $p<q<r$.
Suppose that $T^{\prime}\stackrel{\mathsf{Sp}}{\leftarrow}qpr=T^{\prime}\stackrel{\mathsf{Sp}}{\leftarrow}qrp$ with $p<q<r$.
This assumption is satisfied when $T^{\prime}$ is a tableau of a single box (Lemma~\ref{lem:single_bac}).
We claim that $T\stackrel{\mathsf{Sp}}{\leftarrow}bac=T\stackrel{\mathsf{Sp}}{\leftarrow}bca$ by induction.

A column insertion is called the \emph{insertion to the top} when the inserting letter $x$ is greater then or equal to $x^{\prime}$, the first entry of this column, or the column insertion to the empty column.
If it is not the column insertion to the empty column and $x<x^{\prime}$, then we call such an insertion the \emph{strong insertion to the top}.

Suppose that an insertion to the top, which is not the column insertion to the empty column, begins at some column position in the course of insertion $T\stackrel{\mathsf{Sp}}{\leftarrow}x$.
This implies the strong insertion to the top begins at the same column position in the course of insertion $T^{\prime}\stackrel{\mathsf{Sp}}{\leftarrow}x^{\prime}$, where $x^{\prime}$ is the inserting letter corresponding to $x$.
Such a column insertion is called the \emph{initial insertion to the top} and the inserting letter is called the \emph{initial column inserting letter}.
The initial insertion to the top triggers the subsequent insertions to the top.
If an insertion to the empty column appears, then it is the last column insertion.
If such a column insertion is not the subsequent insertion followed by some insertion to the top, then it is called the \emph{initial empty column insertion} and the inserting letter is also called the initial column inserting letter but the initial empty column insertion does not trigger the subsequent insertions to the top. 
The initial empty column insertion must appear earlier than any other initial insertions to the top and once it appears no further initial empty column insertions appear. 
It is obvious that the sequence of initial column inserting letters in $T\stackrel{\mathsf{Sp}}{\leftarrow}bac$ (resp. $T\stackrel{\mathsf{Sp}}{\leftarrow}bca$) and that of $T^{\prime}\stackrel{\mathsf{Sp}}{\leftarrow}qpr$ (resp. $T^{\prime}\stackrel{\mathsf{Sp}}{\leftarrow}qrp$) are identical.

Let us suppose that the sequence of initial column inserting letters is $xy$ in $T\stackrel{\mathsf{Sp}}{\leftarrow}bac$ with $x<y$ and that the corresponding insertions are not the initial empty column insertions.
The insertions to the top go as follows, where we assume that $x<x^{\prime}$ and $y<y^{\prime}$.
\setlength{\unitlength}{12pt}
\begin{center}
\begin{picture}(25,7)
\put(1,4){\line(0,1){2}}
\put(2,4){\line(0,1){2}}
\put(0,4){\line(1,0){6}}
\put(0,5){\line(1,0){6}}
\put(0,6){\line(1,0){6}}
\put(1,0){\makebox(1,1){$x$}}
\put(1,1){\makebox(1,1){$\uparrow$}}
\put(2,2){\makebox(2,2){$\vdots$}}
\put(1,4){\makebox(1,1){$x^{\prime\prime}$}}
\put(1,5){\makebox(1,1){$x^{\prime}$}}
\put(1,6){\makebox(1,1){$\small{i}$}}
\put(6,4){\makebox(2,2){$\rightsquigarrow$}}
\put(9,4){\line(0,1){2}}
\put(10,4){\line(0,1){2}}
\put(11,5){\line(0,1){1}}
\put(13,4){\line(0,1){2}}
\put(14,4){\line(0,1){2}}
\put(8,4){\line(1,0){7}}
\put(8,5){\line(1,0){7}}
\put(8,6){\line(1,0){7}}
\put(13,0){\makebox(1,1){$y$}}
\put(13,1){\makebox(1,1){$\uparrow$}}
\put(11,2){\makebox(1,2){$\vdots$}}
\put(9,4){\makebox(1,1){$x^{\prime\prime}$}}
\put(9,5){\makebox(1,1){$x$}}
\put(9,6){\makebox(1,1){$\small{i}$}}
\put(10,5){\makebox(1,1){$x^{\prime}$}}
\put(13,4){\makebox(1,1){$y^{\prime\prime}$}}
\put(13,5){\makebox(1,1){$y^{\prime}$}}
\put(13,6){\makebox(1,1){$\small{j}$}}
\put(15,4){\makebox(2,2){$\rightsquigarrow$}}
\put(18,4){\line(0,1){2}}
\put(19,4){\line(0,1){2}}
\put(20,5){\line(0,1){1}}
\put(22,4){\line(0,1){2}}
\put(23,4){\line(0,1){2}}
\put(24,5){\line(0,1){1}}
\put(17,4){\line(1,0){8}}
\put(17,5){\line(1,0){8}}
\put(17,6){\line(1,0){8}}
\put(20,2){\makebox(1,2){$\vdots$}}
\put(18,4){\makebox(1,1){$x^{\prime\prime}$}}
\put(18,5){\makebox(1,1){$x$}}
\put(18,6){\makebox(1,1){$\small{i}$}}
\put(19,5){\makebox(1,1){$x^{\prime}$}}
\put(22,4){\makebox(1,1){$y^{\prime\prime}$}}
\put(22,5){\makebox(1,1){$y$}}
\put(22,6){\makebox(1,1){$\small{j}$}}
\put(23,5){\makebox(1,1){$y^{\prime}$}}
\end{picture}.
\end{center}
The corresponding insertions to the top in $T^{\prime}\stackrel{\mathsf{Sp}}{\leftarrow}qpr$ go as follows.

\setlength{\unitlength}{12pt}
\begin{center}
\begin{picture}(25,6)
\put(1,4){\line(0,1){1}}
\put(2,4){\line(0,1){1}}
\put(0,4){\line(1,0){6}}
\put(0,5){\line(1,0){6}}
\put(1,0){\makebox(1,1){$x$}}
\put(1,1){\makebox(1,1){$\uparrow$}}
\put(2,2){\makebox(2,2){$\vdots$}}
\put(1,4){\makebox(1,1){$x^{\prime\prime}$}}
\put(1,5){\makebox(1,1){$\small{i}$}}
\put(6,4){\makebox(2,1){$\rightsquigarrow$}}
\put(9,4){\line(0,1){1}}
\put(10,4){\line(0,1){1}}
\put(11,4){\line(0,1){1}}
\put(13,4){\line(0,1){1}}
\put(14,4){\line(0,1){1}}
\put(8,4){\line(1,0){7}}
\put(8,5){\line(1,0){7}}
\put(13,0){\makebox(1,1){$y$}}
\put(13,1){\makebox(1,1){$\uparrow$}}
\put(11,2){\makebox(1,2){$\vdots$}}
\put(9,4){\makebox(1,1){$x$}}
\put(9,5){\makebox(1,1){$\small{i}$}}
\put(10,4){\makebox(1,1){$x^{\prime}$}}
\put(13,4){\makebox(1,1){$y^{\prime\prime}$}}
\put(13,5){\makebox(1,1){$\small{j}$}}
\put(15,4){\makebox(2,1){$\rightsquigarrow$}}
\put(18,4){\line(0,1){1}}
\put(19,4){\line(0,1){1}}
\put(20,4){\line(0,1){1}}
\put(22,4){\line(0,1){1}}
\put(23,4){\line(0,1){1}}
\put(24,4){\line(0,1){1}}
\put(17,4){\line(1,0){8}}
\put(17,5){\line(1,0){8}}
\put(20,2){\makebox(1,2){$\vdots$}}
\put(18,4){\makebox(1,1){$x$}}
\put(18,5){\makebox(1,1){$\small{i}$}}
\put(19,4){\makebox(1,1){$x^{\prime\prime}$}}
\put(22,4){\makebox(1,1){$y$}}
\put(22,5){\makebox(1,1){$\small{j}$}}
\put(23,4){\makebox(1,1){$y^{\prime\prime}$}}
\end{picture}.
\end{center}
Since $T^{\prime}\stackrel{\mathsf{Sp}}{\leftarrow}qpr=T^{\prime}\stackrel{\mathsf{Sp}}{\leftarrow}qrp$ by the assumption of induction, the insertions to the top in $T^{\prime}\stackrel{\mathsf{Sp}}{\leftarrow}qrp$ must be the same as above or 

\setlength{\unitlength}{12pt}
\begin{center}
\begin{picture}(25,6)
\put(1,4){\line(0,1){1}}
\put(2,4){\line(0,1){1}}
\put(4,4){\line(0,1){1}}
\put(5,4){\line(0,1){1}}
\put(0,4){\line(1,0){6}}
\put(0,5){\line(1,0){6}}
\put(4,0){\makebox(1,1){$y$}}
\put(4,1){\makebox(1,1){$\uparrow$}}
\put(2,2){\makebox(2,2){$\vdots$}}
\put(1,4){\makebox(1,1){$x^{\prime\prime}$}}
\put(1,5){\makebox(1,1){$\small{i}$}}
\put(4,4){\makebox(1,1){$y^{\prime\prime}$}}
\put(4,5){\makebox(1,1){$\small{j-1}$}}
\put(6,4){\makebox(2,1){$\rightsquigarrow$}}
\put(9,4){\line(0,1){1}}
\put(10,4){\line(0,1){1}}
\put(12,4){\line(0,1){1}}
\put(13,4){\line(0,1){1}}
\put(14,4){\line(0,1){1}}
\put(8,4){\line(1,0){7}}
\put(8,5){\line(1,0){7}}
\put(9,0){\makebox(1,1){$x$}}
\put(9,1){\makebox(1,1){$\uparrow$}}
\put(11,2){\makebox(1,2){$\vdots$}}
\put(9,4){\makebox(1,1){$x^{\prime\prime}$}}
\put(9,5){\makebox(1,1){$\small{i}$}}
\put(12,4){\makebox(1,1){$y$}}
\put(13,4){\makebox(1,1){$y^{\prime\prime}$}}
\put(13,5){\makebox(1,1){$\small{j}$}}
\put(15,4){\makebox(2,1){$\rightsquigarrow$}}
\put(18,4){\line(0,1){1}}
\put(19,4){\line(0,1){1}}
\put(20,4){\line(0,1){1}}
\put(22,4){\line(0,1){1}}
\put(23,4){\line(0,1){1}}
\put(24,4){\line(0,1){1}}
\put(17,4){\line(1,0){8}}
\put(17,5){\line(1,0){8}}
\put(20,2){\makebox(1,2){$\vdots$}}
\put(18,4){\makebox(1,1){$x$}}
\put(18,5){\makebox(1,1){$\small{i}$}}
\put(19,4){\makebox(1,1){$x^{\prime\prime}$}}
\put(22,4){\makebox(1,1){$y$}}
\put(22,5){\makebox(1,1){$\small{j}$}}
\put(23,4){\makebox(1,1){$y^{\prime\prime}$}}
\end{picture}.
\end{center}
In this case, the insertions to the top in $T\stackrel{\mathsf{Sp}}{\leftarrow}bca$ go as follows.

\setlength{\unitlength}{12pt}
\begin{center}
\begin{picture}(25,7)
\put(1,4){\line(0,1){2}}
\put(2,4){\line(0,1){2}}
\put(4,4){\line(0,1){2}}
\put(5,4){\line(0,1){2}}
\put(0,4){\line(1,0){6}}
\put(0,5){\line(1,0){6}}
\put(0,6){\line(1,0){6}}
\put(4,0){\makebox(1,1){$y$}}
\put(4,1){\makebox(1,1){$\uparrow$}}
\put(2,2){\makebox(2,2){$\vdots$}}
\put(1,4){\makebox(1,1){$x^{\prime\prime}$}}
\put(1,5){\makebox(1,1){$x^{\prime}$}}
\put(1,6){\makebox(1,1){$\small{i}$}}
\put(4,4){\makebox(1,1){$y^{\prime\prime}$}}
\put(4,5){\makebox(1,1){$y^{\prime}$}}
\put(4,6){\makebox(1,1){$\small{j-1}$}}
\put(6,4){\makebox(2,2){$\rightsquigarrow$}}

\put(9,4){\line(0,1){2}}
\put(10,4){\line(0,1){2}}
\put(12,4){\line(0,1){2}}
\put(13,4){\line(0,1){2}}
\put(14,4){\line(0,1){2}}
\put(8,4){\line(1,0){7}}
\put(8,5){\line(1,0){7}}
\put(8,6){\line(1,0){7}}
\put(9,0){\makebox(1,1){$x$}}
\put(9,1){\makebox(1,1){$\uparrow$}}
\put(11,2){\makebox(1,2){$\vdots$}}
\put(9,4){\makebox(1,1){$x^{\prime\prime}$}}
\put(9,5){\makebox(1,1){$x^{\prime}$}}
\put(9,6){\makebox(1,1){$\small{i}$}}
\put(12,4){\makebox(1,1){$y^{\prime\prime}$}}
\put(12,5){\makebox(1,1){$y$}}
\put(13,5){\makebox(1,1){$y^{\prime}$}}
\put(13,6){\makebox(1,1){$\small{j}$}}
\put(15,4){\makebox(2,2){$\rightsquigarrow$}}

\put(18,4){\line(0,1){2}}
\put(19,4){\line(0,1){2}}
\put(20,5){\line(0,1){1}}
\put(22,4){\line(0,1){2}}
\put(23,4){\line(0,1){2}}
\put(24,5){\line(0,1){1}}
\put(17,4){\line(1,0){8}}
\put(17,5){\line(1,0){8}}
\put(17,6){\line(1,0){8}}
\put(20,2){\makebox(1,2){$\vdots$}}
\put(18,4){\makebox(1,1){$x^{\prime\prime}$}}
\put(18,5){\makebox(1,1){$x$}}
\put(18,6){\makebox(1,1){$\small{i}$}}
\put(19,5){\makebox(1,1){$x^{\prime}$}}
\put(22,4){\makebox(1,1){$y^{\prime\prime}$}}
\put(22,5){\makebox(1,1){$y$}}
\put(22,6){\makebox(1,1){$\small{j}$}}
\put(23,5){\makebox(1,1){$y^{\prime}$}}
\end{picture}.
\end{center}
As a result, $T\stackrel{\mathsf{Sp}}{\leftarrow}bac=T\stackrel{\mathsf{Sp}}{\leftarrow}bca$.
The verifications of other cases are similar.
We omit the details.

\end{proof}

\begin{ex}
We have that $327\stackrel{\mathsf{CK}}{\sim}372$, 

\setlength{\unitlength}{12pt}
\begin{center}
\begin{picture}(22,2)
\put(0,1){\line(0,1){1}}
\put(1,0){\line(0,1){2}}
\put(2,0){\line(0,1){2}}
\put(3,0){\line(0,1){2}}
\put(4,1){\line(0,1){1}}
\put(1,0){\line(1,0){2}}
\put(0,1){\line(1,0){4}}
\put(0,2){\line(1,0){4}}
\put(0,1){\makebox(1,1){$2$}}
\put(1,0){\makebox(1,1){$6$}}
\put(1,1){\makebox(1,1){$4$}}
\put(2,0){\makebox(1,1){$7$}}
\put(2,1){\makebox(1,1){$5$}}
\put(3,1){\makebox(1,1){$8$}}
\put(4,0.25){\makebox(3,2){$\stackrel{\mathsf{Sp}}{\leftarrow}327$}}
\put(7,0){\makebox(1,2){$=$}}
\put(8,1){\line(0,1){1}}
\put(9,0){\line(0,1){2}}
\put(10,0){\line(0,1){2}}
\put(11,0){\line(0,1){2}}
\put(12,1){\line(0,1){1}}
\put(9,0){\line(1,0){2}}
\put(8,1){\line(1,0){4}}
\put(8,2){\line(1,0){4}}
\put(8,1){\makebox(1,1){$2$}}
\put(9,0){\makebox(1,1){$6$}}
\put(9,1){\makebox(1,1){$3$}}
\put(10,0){\makebox(1,1){$7$}}
\put(10,1){\makebox(1,1){$5$}}
\put(11,1){\makebox(1,1){$7$}}
\put(12,0){\makebox(3,1){$\leftarrow 438$}}
\put(15,0){\makebox(1,2){$=$}}
\put(16,1){\line(0,1){1}}
\put(17,0){\line(0,1){2}}
\put(18,0){\line(0,1){2}}
\put(19,0){\line(0,1){2}}
\put(20,0){\line(0,1){2}}
\put(21,1){\line(0,1){1}}
\put(22,1){\line(0,1){1}}
\put(17,0){\line(1,0){3}}
\put(16,1){\line(1,0){6}}
\put(16,2){\line(1,0){6}}
\put(16,1){\makebox(1,1){$2$}}
\put(17,0){\makebox(1,1){$4$}}
\put(17,1){\makebox(1,1){$3$}}
\put(18,0){\makebox(1,1){$6$}}
\put(18,1){\makebox(1,1){$5$}}
\put(19,0){\makebox(1,1){$8$}}
\put(19,1){\makebox(1,1){$6$}}
\put(20,1){\makebox(1,1){$7$}}
\put(21,1){\makebox(1,1){$8$}}
\end{picture},
\end{center}
and
\setlength{\unitlength}{12pt}
\begin{center}
\begin{picture}(22,2)

\put(0,1){\line(0,1){1}}
\put(1,0){\line(0,1){2}}
\put(2,0){\line(0,1){2}}
\put(3,0){\line(0,1){2}}
\put(4,1){\line(0,1){1}}
\put(1,0){\line(1,0){2}}
\put(0,1){\line(1,0){4}}
\put(0,2){\line(1,0){4}}

\put(0,1){\makebox(1,1){$2$}}
\put(1,0){\makebox(1,1){$6$}}
\put(1,1){\makebox(1,1){$4$}}
\put(2,0){\makebox(1,1){$7$}}
\put(2,1){\makebox(1,1){$5$}}
\put(3,1){\makebox(1,1){$8$}}

\put(4,0.25){\makebox(3,2){$\stackrel{\mathsf{Sp}}{\leftarrow}372$}}

\put(7,0){\makebox(1,2){$=$}}

\put(8,1){\line(0,1){1}}
\put(9,0){\line(0,1){2}}
\put(10,0){\line(0,1){2}}
\put(11,0){\line(0,1){2}}
\put(12,1){\line(0,1){1}}
\put(9,0){\line(1,0){2}}
\put(8,1){\line(1,0){4}}
\put(8,2){\line(1,0){4}}

\put(8,1){\makebox(1,1){$2$}}
\put(9,0){\makebox(1,1){$6$}}
\put(9,1){\makebox(1,1){$3$}}
\put(10,0){\makebox(1,1){$7$}}
\put(10,1){\makebox(1,1){$5$}}
\put(11,1){\makebox(1,1){$7$}}

\put(12,0){\makebox(3,1){$\leftarrow 483$}}

\put(15,0){\makebox(1,2){$=$}}

\put(16,1){\line(0,1){1}}
\put(17,0){\line(0,1){2}}
\put(18,0){\line(0,1){2}}
\put(19,0){\line(0,1){2}}
\put(20,0){\line(0,1){2}}
\put(21,1){\line(0,1){1}}
\put(22,1){\line(0,1){1}}
\put(17,0){\line(1,0){3}}
\put(16,1){\line(1,0){6}}
\put(16,2){\line(1,0){6}}

\put(16,1){\makebox(1,1){$2$}}
\put(17,0){\makebox(1,1){$4$}}
\put(17,1){\makebox(1,1){$3$}}
\put(18,0){\makebox(1,1){$6$}}
\put(18,1){\makebox(1,1){$5$}}
\put(19,0){\makebox(1,1){$8$}}
\put(19,1){\makebox(1,1){$6$}}
\put(20,1){\makebox(1,1){$7$}}
\put(21,1){\makebox(1,1){$8$}}

\end{picture}.
\end{center}

In both cases, the insertions to the top go as follows.
\setlength{\unitlength}{12pt}
\begin{center}
\begin{picture}(19,4)

\put(0,3){\line(0,1){1}}
\put(1,3){\line(0,1){1}}
\put(2,3){\line(0,1){1}}
\put(3,3){\line(0,1){1}}
\put(4,3){\line(0,1){1}}
\put(0,3){\line(1,0){4}}
\put(0,4){\line(1,0){4}}

\put(0,3){\makebox(1,1){$2$}}
\put(1,3){\makebox(1,1){$3$}}
\put(2,3){\makebox(1,1){$5$}}
\put(3,3){\makebox(1,1){$7$}}

\put(3,0){\makebox(1,1){$7$}}
\put(3,1){\makebox(1,1){$\uparrow$}}
\put(4,3){\makebox(2,1){$\rightsquigarrow$}}

\put(6,3){\line(0,1){1}}
\put(7,3){\line(0,1){1}}
\put(8,3){\line(0,1){1}}
\put(9,3){\line(0,1){1}}
\put(10,3){\line(0,1){1}}
\put(11,3){\line(0,1){1}}
\put(6,3){\line(1,0){5}}
\put(6,4){\line(1,0){5}}

\put(6,3){\makebox(1,1){$2$}}
\put(7,3){\makebox(1,1){$3$}}
\put(8,3){\makebox(1,1){$5$}}
\put(9,3){\makebox(1,1){$7$}}
\put(10,3){\makebox(1,1){$8$}}

\put(8,0){\makebox(1,1){$5$}}
\put(8,1){\makebox(1,1){$\uparrow$}}
\put(11,3){\makebox(2,1){$\rightsquigarrow$}}

\put(13,3){\line(0,1){1}}
\put(14,3){\line(0,1){1}}
\put(15,3){\line(0,1){1}}
\put(16,3){\line(0,1){1}}
\put(17,3){\line(0,1){1}}
\put(18,3){\line(0,1){1}}
\put(19,3){\line(0,1){1}}
\put(13,3){\line(1,0){6}}
\put(13,4){\line(1,0){6}}

\put(13,3){\makebox(1,1){$2$}}
\put(14,3){\makebox(1,1){$3$}}
\put(15,3){\makebox(1,1){$5$}}
\put(16,3){\makebox(1,1){$6$}}
\put(17,3){\makebox(1,1){$7$}}
\put(18,3){\makebox(1,1){$8$}}
\end{picture}.
\end{center}
\end{ex}

\begin{lem} \label{lem:acb}
Let $T$ be an increasing shifted tableau of shape $\lambda=(\lambda_{1},\lambda_{2},\ldots)$ such that $\mathfrak{row}(T)acb$ is an FPF-involution word, where $a<b<c$.
Then, $T\stackrel{\mathsf{Sp}}{\leftarrow}acb=T\stackrel{\mathsf{Sp}}{\leftarrow}cab$.
\end{lem}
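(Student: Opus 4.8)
The plan is to mirror the structure of the proof of Lemma~\ref{lem:bac}, exploiting the fact that $acb$ and $cab$ are obtained from one another by interchanging the first two letters while fixing the last letter $b$. I would first reduce everything to the behaviour of the first row: writing $T_1$ for the first row of $T$ and $T'$ for the portion of $T$ below it, I would split into cases according to the position of the largest entry $T_{(1,\lambda_{1})}$ of the first row relative to $a$, $b$, $c$, exactly as in Cases 1--5 of Lemma~\ref{lem:bac}. When $T_{(1,\lambda_{1})}<c$ (the analogues of Cases 1--4), at most the letters $a$ and $c$ are appended or absorbed in the first row and no letter, or only a single controlled letter, is passed below the first row; here I would compute both sides directly, using Lemma~\ref{lem:T1} to pin down the parity-driven behaviour at position $(1,1)$, where Lemma~\ref{lem:even} guarantees the diagonal entry is even. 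Each of these cases subdivides further according to whether $T_{(1,1)}$ is congruent to $a$ or to $b$ modulo $2$, and in every such subcase both $T\stackrel{\mathsf{Sp}}{\leftarrow}acb$ and $T\stackrel{\mathsf{Sp}}{\leftarrow}cab$ yield the same explicit first row, so the equality is immediate.

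The substance is the remaining case $T_{(1,\lambda_{1})}\geq c$, where all three of $a$, $c$, $b$ bump letters out of the first row. The key observation is that inserting $a$ then $c$ into $T_1$, and inserting $c$ then $a$ into $T_1$, leave the first row in the same state but eject the two displaced letters in the opposite order, after which inserting $b$ ejects a common third letter; this is exactly why the naive reduction $T\stackrel{\mathsf{Sp}}{\leftarrow}ac=T\stackrel{\mathsf{Sp}}{\leftarrow}ca$ fails below the first row and the full third letter $b$ is needed to reconcile them. Concretely, if $p$, $q$, $r$ denote the letters bumped into $T'$ by $a$, $b$, $c$ respectively, each being either the smallest entry exceeding the inserted letter or its parity-shifted replacement $a+1$, $b+1$, $c+1$, so that $p<q<r$, then $T\stackrel{\mathsf{Sp}}{\leftarrow}acb$ reduces to $T'\stackrel{\mathsf{Sp}}{\leftarrow}prq$ below a fixed first row, while $T\stackrel{\mathsf{Sp}}{\leftarrow}cab$ reduces to $T'\stackrel{\mathsf{Sp}}{\leftarrow}rpq$ below the same first row. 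Since $prq$ and $rpq$ again differ by interchanging the first two letters of an $acb$-pattern on $p<q<r$, this is precisely the assertion of the present lemma for the smaller tableau $T'$. I would therefore argue by induction on the number of rows, the base case being the single-box version of the lemma already established (the unlabelled lemma immediately preceding the present one).

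To transport the equality $T'\stackrel{\mathsf{Sp}}{\leftarrow}prq=T'\stackrel{\mathsf{Sp}}{\leftarrow}rpq$ back up to $T$, I would reuse the ``insertions to the top'' analysis introduced in Case 5 of Lemma~\ref{lem:bac}: one tracks the sequence of initial column-inserting letters, observes that the initial insertions to the top (together with the single possible initial empty-column insertion) occur in the same order for both words, and checks that whenever the two reduced insertions into $T'$ agree, the induced column-insertion cascades into $T$ agree as well. This propagation step is essentially formal once the reduction to $prq$ versus $rpq$ is in place, so the main obstacle is the bookkeeping in the remaining cases: verifying in every parity subcase of $T_{(1,\lambda_{1})}<c$ that the two first rows coincide, and confirming in the case $T_{(1,\lambda_{1})}\geq c$ that the bumped triple is genuinely $(p,q,r)$ with $p<q<r$ in the degenerate situations where $b=a+1$ or $c=b+1$ forces the special first-entry rule to fire. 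I expect these parity edge cases, rather than the induction itself, to be where the care is required.
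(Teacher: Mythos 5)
Your overall architecture (case split on $T_{(1,\lambda_{1})}$ relative to $a,b,c$, direct computation when little or nothing spills out of the first row, recursion into $T^{\prime}$ plus the ``insertions to the top'' bookkeeping otherwise) matches the paper's, but the recursive step contains a genuine error. You assert that the three letters ejected from the first row form the words $prq$ and $rpq$ with $p<q<r$, so that the subproblem on $T^{\prime}$ is again an instance of the present lemma. This is false: the letter bumped by $b$ need not exceed the letter bumped by $a$. At the level of the first-row bumping, take first row $(2,8,10)$ and $(a,b,c)=(4,5,6)$: inserting $acb=465$ ejects $8,10,6$ while inserting $cab=645$ ejects $8,6,10$, and these two words are related by the $\beta\gamma\alpha\sim\beta\alpha\gamma$ relation (with $\alpha=6<\beta=8<\gamma=10$), not by $acb\sim cab$. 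The paper states this explicitly: contrary to the situation in Lemma~\ref{lem:bac}, \emph{all three} types of Coxeter--Knuth relations occur between the ejected words. Consequently the induction cannot be run on Lemma~\ref{lem:acb} alone; the induction hypothesis must be the conjunction of Lemmas~\ref{lem:bac}, \ref{lem:acb} and \ref{lem:braid} for the smaller tableau, with the three single-box lemmas as the common base case, which is how the paper closes the loop.

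A second gap: within the case $T_{(1,\lambda_{1})}>c$ you cannot always reduce to three row bumps into $T^{\prime}$. When $a<T_{(1,1)}$ the insertion of $a$ hits the main diagonal, and by the insertion rules it either fires the parity rule (sending $a+2$ into the \emph{second column}) or replaces the diagonal entry and column-inserts the displaced letter; either way the cascade proceeds by column insertions along the top rows rather than by a bump into the second row, so the picture of a fixed first row sitting over $T^{\prime}\stackrel{\mathsf{Sp}}{\leftarrow}w$ breaks down. The paper isolates exactly these configurations (its Case~6, subcases $T_{(1,1)}>a$ in the two parities, including the delicate $T_{(1,1)}=c$ situation) and disposes of them by explicit computation, reserving the recursion for the subcase $T_{(1,1)}\leq a$. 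Your proposal needs the same separation; as written it treats the whole of $T_{(1,\lambda_{1})}\geq c$ by the recursion, which does not apply there.
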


\begin{proof}

Let $T_{1}$ be the first row of $T$ and $T^{\prime}$ be a portion of $T$ below the first row.

\begin{flushleft}
\textbf{Case 1:} $T_{(1,\lambda_1)}<a$.
\end{flushleft}

\setlength{\unitlength}{12pt}
\begin{center}
\begin{picture}(16,3)
\put(0,1){\makebox(8,1){$T\stackrel{\mathsf{Sp}}{\leftarrow}acb=T\stackrel{\mathsf{Sp}}{\leftarrow}cab=$}}
\put(8,2){\line(0,1){1}}
\put(9,1){\line(0,1){1}}
\put(12,2){\line(0,1){1}}
\put(13,2){\line(0,1){1}}
\put(14,2){\line(0,1){1}}
\put(9,1){\line(1,0){1}}
\put(8,2){\line(1,0){6}}
\put(8,3){\line(1,0){6}}

\put(14,1){\makebox(2,1){$\leftarrow c$}}

\put(10,0){\makebox(1,1){$\ddots$}}
\put(11,0){\makebox(1,2){$T^{\prime}$}}
\put(8,2){\makebox(4,1){$T_{1}$}}
\put(12,2){\makebox(1,1){$a$}}
\put(13,2){\makebox(1,1){$b$}}
\end{picture}.
\end{center}

\begin{flushleft}
\textbf{Case 2:} $T_{(1,\lambda_{1})}=a$.
\end{flushleft}

This case must be excluded because $\mathfrak{row}(T)a$ is not an FPF-involution word.

\begin{flushleft}
\textbf{Case 3:} $a<T_{(1,\lambda_{1})}<b$.
\end{flushleft}

Let $a^{\prime}$ be the smallest entry such that $a<a^{\prime}$.
Then,

\setlength{\unitlength}{12pt}
\begin{center}
\begin{picture}(15,3)
\put(0,1){\makebox(8,1){$T\stackrel{\mathsf{Sp}}{\leftarrow}acb=T\stackrel{\mathsf{Sp}}{\leftarrow}cab=$}}
\put(8,2){\line(0,1){1}}
\put(9,1){\line(0,1){1}}
\put(12,2){\line(0,1){1}}
\put(13,2){\line(0,1){1}}
\put(9,1){\line(1,0){1}}
\put(8,2){\line(1,0){5}}
\put(8,3){\line(1,0){5}}
\put(13,1){\makebox(2,1){$\leftarrow \Tilde{a}c$}}
\put(10,0){\makebox(1,1){$\ddots$}}
\put(11,0){\makebox(1,2){$T^{\prime}$}}
\put(8,2){\makebox(4,1){$T_{1}^{\prime}$}}
\put(12,2){\makebox(1,1){$b$}}
\end{picture},
\end{center}
where $\Tilde{a}$ is either $a+1$ or $a^{\prime}$.
  
\begin{flushleft}
\textbf{Case 4:} $b\leq T_{(1,\lambda_{1})}<c$.
\end{flushleft}

Let $a^{\prime}$ (resp. $b^{\prime}$) be the smallest entry such that $a<a^{\prime}$ (resp. $b<b^{\prime}$).

\setlength{\unitlength}{12pt}
\begin{center}
\begin{picture}(15,3)
\put(0,1){\makebox(8,1){$T\stackrel{\mathsf{Sp}}{\leftarrow}acb=T\stackrel{\mathsf{Sp}}{\leftarrow}cab=$}}
\put(8,2){\line(0,1){1}}
\put(9,1){\line(0,1){1}}
\put(12,2){\line(0,1){1}}
\put(13,2){\line(0,1){1}}
\put(9,1){\line(1,0){1}}
\put(8,2){\line(1,0){5}}
\put(8,3){\line(1,0){5}}
\put(13,1){\makebox(2,1){$\leftarrow \Tilde{a}\Tilde{b}$}}
\put(10,0){\makebox(1,1){$\ddots$}}
\put(11,0){\makebox(1,2){$T^{\prime}$}}
\put(8,2){\makebox(4,1){$T_{1}^{\prime}$}}
\put(12,2){\makebox(1,1){$c$}}
\end{picture},
\end{center}
where $\Tilde{a}$ (resp. $\Tilde{b}$) is either $a+1$ or $a^{\prime}$ (resp. either $b+1$ or $b^{\prime}$).

\begin{flushleft}
\textbf{Case 5:} $T_{(1,\lambda_{1})}=c$.
\end{flushleft}

This case must be excluded because $\mathfrak{row}(T)c$ is not an FPF-involution word.

\begin{flushleft}
\textbf{Case 6:} $T_{(1,\lambda_{1})}>c$.
\end{flushleft}

\begin{enumerate}

\item $T_{(1,1)}>a$ and $T_{(1,1)}\not\equiv a \pmod{2}$.

By Lemma~\ref{lem:T1}, the first row of $T$ is either 
\setlength{\unitlength}{12pt}
\begin{center}
\begin{picture}(19,1)
\put(0,0){\makebox(3,1){$T_{1}^{(1)}=$}}
\put(3,0){\line(0,1){1}}
\put(5,0){\line(0,1){1}}
\put(7,0){\line(0,1){1}}
\put(9,0){\line(0,1){1}}
\put(3,0){\line(1,0){6}}
\put(3,1){\line(1,0){6}}
\put(3,0){\makebox(2,1){$a+1$}}
\put(5,0){\makebox(2,1){$a+2$}}
\put(7,0){\makebox(2,1){$A$}}
\put(10,0){\makebox(1,1){$\text{or}$}}
\put(12,0){\makebox(3,1){$T_{1}^{(2)}=$}}
\put(15,0){\line(0,1){1}}
\put(17,0){\line(0,1){1}}
\put(19,0){\line(0,1){1}}
\put(15,0){\line(1,0){4}}
\put(15,1){\line(1,0){4}}
\put(15,0){\makebox(2,1){$a+1$}}
\put(17,0){\makebox(2,1){$A^{\prime}$}}
\end{picture},
\end{center}
where the leftmost entry in $A$ is greater than or equal to $a+4$ and the leftmost entry in $A^{\prime}$ is greater than or equal to $a+3$.
 
\begin{enumerate}
\item $c=a+2$.

If $T_{1}=T_{1}^{(1)}$, then

\setlength{\unitlength}{12pt}
\begin{center}
\begin{picture}(22,3)
\put(0,1){\makebox(8,1){$T\stackrel{\mathsf{Sp}}{\leftarrow}acb=T\stackrel{\mathsf{Sp}}{\leftarrow}cab=$}}
\put(8,2){\line(0,1){1}}
\put(10,1){\line(0,1){2}}
\put(12,2){\line(0,1){1}}
\put(14,2){\line(0,1){1}}
\put(16,2){\line(0,1){1}}
\put(10,1){\line(1,0){2}}
\put(8,2){\line(1,0){8}}
\put(8,3){\line(1,0){8}}
\put(12,0){\makebox(1,1){$\ddots$}}
\put(13,0){\makebox(1,2){$T^{\prime}$}}
\put(8,2){\makebox(2,1){$a+1$}}
\put(10,2){\makebox(2,1){$a+2$}}
\put(12,2){\makebox(2,1){$a+3$}}
\put(14,2){\makebox(2,1){$A$}}
\put(16,1){\makebox(6,1){$\leftarrow (a+3)(a+2)$}}
\end{picture},
\end{center}

If $T_{1}=T_{1}^{(2)}$, then the leftmost entry in $A^{\prime}$ is $a+3$.
Otherwise, all the entries in $T^{\prime}$ are greater than or equal to $a+5$ and all the entries in $A^{\prime}$ are greater than or equal to $a+4$ so that 
\begin{align*}
\mathfrak{row}(T)ac&=\mathfrak{row}(T^{\prime})T_{(1,1)}\mathfrak{row}(A^{\prime})a(a+2) \\
&\stackrel{\mathsf{Sp}}{\sim}(a+1)a(a+2)\mathfrak{row}(T^{\prime})\mathfrak{row}(A^{\prime}) \\
&\stackrel{\mathsf{Sp}}{\sim}(a+1)(a+2)(a+2)\mathfrak{row}(T^{\prime})\mathfrak{row}(A^{\prime}),
\end{align*}
which is not an FPF-involution word.

\setlength{\unitlength}{12pt}
\begin{center}
\begin{picture}(21,3)
\put(0,1){\makebox(8,1){$T\stackrel{\mathsf{Sp}}{\leftarrow}acb=T\stackrel{\mathsf{Sp}}{\leftarrow}cab=$}}
\put(8,2){\line(0,1){1}}
\put(10,1){\line(0,1){2}}
\put(12,2){\line(0,1){1}}
\put(15,2){\line(0,1){1}}
\put(10,1){\line(1,0){2}}
\put(8,2){\line(1,0){7}}
\put(8,3){\line(1,0){7}}
\put(12,0){\makebox(1,1){$\ddots$}}
\put(13,0){\makebox(1,2){$T^{\prime}$}}
\put(8,2){\makebox(2,1){$a+1$}}
\put(10,2){\makebox(2,1){$a+2$}}
\put(12,2){\makebox(3,1){$A^{\prime}$}}
\put(15,1){\makebox(6,1){$\leftarrow (a+3)(a+2)$}}
\end{picture},
\end{center}

\item $c>a+2$.

The argument similar to \textbf{Case 4} (1) in the proof of Lemma~\ref{lem:bac} shows that $T_{(1,1)}=a+1$ and $T\stackrel{\mathsf{Sp}}{\leftarrow}acb=T\stackrel{\mathsf{Sp}}{\leftarrow}cab$.

\end{enumerate} 
  
\item $T_{(1,1)}>a$ and $T_{(1,1)}\equiv a \pmod{2}$.

By Lemma~\ref{lem:even}, $a$ is an even letter.
Let us write

\setlength{\unitlength}{12pt}
\begin{center}
\begin{picture}(8,4)
\put(0,1){\makebox(2,2){$T=$}}
\put(2,3){\line(0,1){1}}
\put(3,2){\line(0,1){2}}
\put(4,1){\line(0,1){1}}
\put(8,3){\line(0,1){1}}
\put(4,1){\line(1,0){1}}
\put(3,2){\line(1,0){4}}
\put(2,3){\line(1,0){6}}
\put(2,4){\line(1,0){6}}
\put(5,0){\makebox(1,1){$\ddots$}}
\put(2,3){\makebox(1,1){$x$}}
\put(3,2){\makebox(4,1){$B$}}
\put(4,3){\makebox(4,1){$A$}}
\end{picture}.
\end{center}

\begin{enumerate}
\item $x>c+1$.

Any entry in $T$ is greater than or equal to $c+2$ so that $\mathfrak{row}(T)cab\stackrel{\mathsf{Sp}}{\sim}cab\mathfrak{row}(T)$.
Since this is an FPF-involution word, $c$ is an even letter so that $x\equiv c \pmod{2}$.

\setlength{\unitlength}{12pt}
\begin{center}
\begin{picture}(14,4)
\put(0,2){\makebox(8,1){$T\stackrel{\mathsf{Sp}}{\leftarrow}acb=T\stackrel{\mathsf{Sp}}{\leftarrow}cab=$}}
\put(8,3){\line(0,1){1}}
\put(9,2){\line(0,1){2}}
\put(10,1){\line(0,1){3}}
\put(11,3){\line(0,1){1}}
\put(14,3){\line(0,1){1}}
\put(10,1){\line(1,0){1}}
\put(9,2){\line(1,0){4}}
\put(8,3){\line(1,0){6}}
\put(8,4){\line(1,0){6}}
\put(11,0){\makebox(1,1){$\ddots$}}
\put(8,3){\makebox(1,1){$a$}}
\put(9,2){\makebox(1,1){$c$}}
\put(9,3){\makebox(1,1){$b$}}
\put(10,3){\makebox(1,1){$x$}}
\put(10,2){\makebox(3,1){$B$}}
\put(11,3){\makebox(3,1){$A$}}
\end{picture}.
\end{center}

\item $x=c+1$.

\setlength{\unitlength}{12pt}
\begin{center}
\begin{picture}(14,4)
\put(0,2){\makebox(8,1){$T\stackrel{\mathsf{Sp}}{\leftarrow}acb=T\stackrel{\mathsf{Sp}}{\leftarrow}cab=$}}
\put(8,3){\line(0,1){1}}
\put(9,2){\line(0,1){2}}
\put(11,1){\line(0,1){3}}
\put(14,3){\line(0,1){1}}
\put(11,1){\line(1,0){1}}
\put(9,2){\line(1,0){4}}
\put(8,3){\line(1,0){6}}
\put(8,4){\line(1,0){6}}
\put(12,0){\makebox(1,1){$\ddots$}}
\put(8,3){\makebox(1,1){$a$}}
\put(9,2){\makebox(2,1){$c+1$}}
\put(9,3){\makebox(2,1){$b$}}
\put(11,2){\makebox(2,1){$B$}}
\put(11,3){\makebox(3,1){$A^{\prime}$}}
\end{picture},
\end{center}
where 
\setlength{\unitlength}{12pt}
\begin{picture}(3,1)
\put(0,0){\line(0,1){1}}
\put(3,0){\line(0,1){1}}
\put(0,0){\line(1,0){3}}
\put(0,1){\line(1,0){3}}
\put(0,0){\makebox(3,1){$A^{\prime}$}}
\end{picture}
is obtained from 
\setlength{\unitlength}{12pt}
\begin{picture}(3,1)
\put(0,0){\line(0,1){1}}
\put(3,0){\line(0,1){1}}
\put(0,0){\line(1,0){3}}
\put(0,1){\line(1,0){3}}
\put(0,0){\makebox(3,1){$A$}}
\end{picture}
by the initial insertion to the top at the first column with the initial column inserting letter $c+2$.

\item $x=c$.

The tableau $T$ has the following configuration.

\setlength{\unitlength}{12pt}
\begin{center}
\begin{picture}(7,4)
\put(0,3){\line(0,1){1}}
\put(1,2){\line(0,1){2}}
\put(3,1){\line(0,1){3}}
\put(4,2){\line(0,1){2}}
\put(7,3){\line(0,1){1}}
\put(3,1){\line(1,0){1}}
\put(1,2){\line(1,0){5}}
\put(0,3){\line(1,0){7}}
\put(0,4){\line(1,0){7}}
\put(4,0){\makebox(1,1){$\ddots$}}
\put(0,3){\makebox(1,1){$c$}}
\put(1,2){\makebox(2,1){$c+2$}}
\put(1,3){\makebox(2,1){$c+1$}}
\put(3,2){\makebox(1,1){$q$}}
\put(3,3){\makebox(1,1){$p$}}
\put(4,2){\makebox(2,1){$B$}}
\put(4,3){\makebox(3,1){$A$}}
\end{picture}.
\end{center}

Otherwise, $\mathfrak{row}(T)c$ is not an FPF-involution word.
It is clear that $T_{(1,2)}=c+1$.
If $T_{(2,2)}>c+2$, then
\begin{align*}
\mathfrak{row}(T)c&\stackrel{\mathsf{Sp}}{\sim}\mathfrak{row}(T^{\prime})c(c+1)cp \mathfrak{row}(A) \\
&\stackrel{\mathsf{Sp}}{\sim}c(c+1)c\mathfrak{row}(T^{\prime})p\mathfrak{row}(A) \\
&\stackrel{\mathsf{Sp}}{\sim}(c+1)c(c+1)\mathfrak{row}(T^{\prime})p\mathfrak{row}(A),
\end{align*}
which implies $\mathfrak{row}(T)c$ is not an FPF-involution word.
Consequently, $T_{(2,2)}=c+2$.

Let $a_{1}$ (resp. $b_{1}$) be the leftmost entry of $A$ (resp. $B$).

\begin{enumerate}
\item $p=c+2$ and $q=c+3$.

\setlength{\unitlength}{12pt}
\begin{center}
\begin{picture}(20,7)
\put(0,4){\makebox(8,1){$T\stackrel{\mathsf{Sp}}{\leftarrow}acb=T\stackrel{\mathsf{Sp}}{\leftarrow}cab=$}}
\put(8,6){\line(0,1){1}}
\put(9,5){\line(0,1){2}}
\put(10,4){\line(0,1){3}}
\put(12,3){\line(0,1){4}}
\put(14,2){\line(0,1){5}}
\put(15,2){\line(0,1){5}}
\put(14,2){\line(1,0){1}}
\put(12,3){\line(1,0){4}}
\put(10,4){\line(1,0){7}}
\put(9,5){\line(1,0){9}}
\put(8,6){\line(1,0){11}}
\put(8,7){\line(1,0){12}}
\put(14,0){\makebox(1,1){$c+4$}}
\put(14,1){\makebox(1,1){$\uparrow$}}
\put(15,1){\makebox(1,1){$\ddots$}}
\put(8,6){\makebox(1,1){$a$}}
\put(9,5){\makebox(1,1){$c$}}
\put(9,6){\makebox(1,1){$b$}}
\put(10,4){\makebox(2,1){$\cdot$}}
\put(10,5){\makebox(2,1){$c+2$}}
\put(10,6){\makebox(2,1){$c+1$}}
\put(12,3){\makebox(2,1){$\cdot$}}
\put(12,4){\makebox(2,1){$\cdot$}}
\put(12,5){\makebox(2,1){$c+3$}}
\put(12,6){\makebox(2,1){$c+2$}}
\put(14,2){\makebox(1,1){$\cdot$}}
\put(14,3){\makebox(1,1){$\cdot$}}
\put(14,4){\makebox(1,1){$\cdot$}}
\put(14,5){\makebox(1,1){$b_{1}$}}
\put(14,6){\makebox(1,1){$a_{1}$}}
\end{picture}.
\end{center}

\item $p=c+2$ and $q\geq c+4$.

\setlength{\unitlength}{12pt}
\begin{center}
\begin{picture}(20,7)
\put(0,4){\makebox(8,1){$T\stackrel{\mathsf{Sp}}{\leftarrow}acb=T\stackrel{\mathsf{Sp}}{\leftarrow}cab=$}}
\put(8,6){\line(0,1){1}}
\put(9,5){\line(0,1){2}}
\put(10,4){\line(0,1){3}}
\put(12,3){\line(0,1){4}}
\put(14,2){\line(0,1){5}}
\put(15,2){\line(0,1){5}}
\put(14,2){\line(1,0){1}}
\put(12,3){\line(1,0){4}}
\put(10,4){\line(1,0){7}}
\put(9,5){\line(1,0){9}}
\put(8,6){\line(1,0){11}}
\put(8,7){\line(1,0){12}}
\put(14,0){\makebox(1,1){$q$}}
\put(14,1){\makebox(1,1){$\uparrow$}}
\put(15,1){\makebox(1,1){$\ddots$}}
\put(8,6){\makebox(1,1){$a$}}
\put(9,5){\makebox(1,1){$c$}}
\put(9,6){\makebox(1,1){$b$}}
\put(10,4){\makebox(2,1){$\cdot$}}
\put(10,5){\makebox(2,1){$c+2$}}
\put(10,6){\makebox(2,1){$c+1$}}
\put(12,3){\makebox(2,1){$\cdot$}}
\put(12,4){\makebox(2,1){$\cdot$}}
\put(12,5){\makebox(2,1){$c+3$}}
\put(12,6){\makebox(2,1){$c+2$}}
\put(14,2){\makebox(1,1){$\cdot$}}
\put(14,3){\makebox(1,1){$\cdot$}}
\put(14,4){\makebox(1,1){$\cdot$}}
\put(14,5){\makebox(1,1){$b_{1}$}}
\put(14,6){\makebox(1,1){$a_{1}$}}
\end{picture}.
\end{center}

\item $p=c+3$.

In this case, $q=c+4$.
Otherwise, $qp\stackrel{\mathsf{Sp}}{\sim}pq$ so that
\begin{align*}
\mathfrak{row}(T)c&=\cdots T_{(2,2)}q\mathfrak{row}(B)c(c+1)p\mathfrak{row}(A)c \\
&\stackrel{\mathsf{Sp}}{\sim}\cdots T_{(2,2)}q\mathfrak{row}(B)(c+1)c(c+1)p\mathfrak{row}(A) \\
&\stackrel{\mathsf{Sp}}{\sim}(c+2)(c+3)q\cdots \mathfrak{row}(B)c(c+1)p\mathfrak{row}(A) \\
&\stackrel{\mathsf{Sp}}{\sim}(c+2)(c+3)pq\cdots \mathfrak{row}(B)c(c+1)\mathfrak{row}(A),
\end{align*}
which is not an FPF-involution word.

\setlength{\unitlength}{12pt}
\begin{center}
\begin{picture}(20,7)
\put(0,4){\makebox(8,1){$T\stackrel{\mathsf{Sp}}{\leftarrow}acb=T\stackrel{\mathsf{Sp}}{\leftarrow}cab=$}}
\put(8,6){\line(0,1){1}}
\put(9,5){\line(0,1){2}}
\put(10,4){\line(0,1){3}}
\put(12,3){\line(0,1){4}}
\put(14,2){\line(0,1){5}}
\put(15,2){\line(0,1){5}}
\put(14,2){\line(1,0){1}}
\put(12,3){\line(1,0){4}}
\put(10,4){\line(1,0){7}}
\put(9,5){\line(1,0){9}}
\put(8,6){\line(1,0){11}}
\put(8,7){\line(1,0){12}}
\put(14,0){\makebox(1,1){$c+4$}}
\put(14,1){\makebox(1,1){$\uparrow$}}
\put(15,1){\makebox(1,1){$\ddots$}}
\put(8,6){\makebox(1,1){$a$}}
\put(9,5){\makebox(1,1){$c$}}
\put(9,6){\makebox(1,1){$b$}}
\put(10,4){\makebox(2,1){$\cdot$}}
\put(10,5){\makebox(2,1){$c+2$}}
\put(10,6){\makebox(2,1){$c+1$}}
\put(12,3){\makebox(2,1){$\cdot$}}
\put(12,4){\makebox(2,1){$\cdot$}}
\put(12,5){\makebox(2,1){$c+4$}}
\put(12,6){\makebox(2,1){$c+3$}}
\put(14,2){\makebox(1,1){$\cdot$}}
\put(14,3){\makebox(1,1){$\cdot$}}
\put(14,4){\makebox(1,1){$\cdot$}}
\put(14,5){\makebox(1,1){$b_{1}$}}
\put(14,6){\makebox(1,1){$a_{1}$}}
\end{picture}.
\end{center}

\item $p\geq c+4$.

\setlength{\unitlength}{12pt}
\begin{center}
\begin{picture}(19,4)
\put(0,2){\makebox(8,1){$T\stackrel{\mathsf{Sp}}{\leftarrow}acb=T\stackrel{\mathsf{Sp}}{\leftarrow}cab=$}}
\put(8,3){\line(0,1){1}}
\put(9,2){\line(0,1){2}}
\put(10,1){\line(0,1){3}}
\put(12,2){\line(0,1){2}}
\put(14,2){\line(0,1){2}}
\put(15,3){\line(0,1){1}}
\put(19,3){\line(0,1){1}}
\put(10,1){\line(1,0){2}}
\put(9,2){\line(1,0){8}}
\put(8,3){\line(1,0){11}}
\put(8,4){\line(1,0){11}}
\put(12,0){\makebox(1,1){$\ddots$}}
\put(8,3){\makebox(1,1){$a$}}
\put(9,2){\makebox(1,1){$c$}}
\put(9,3){\makebox(1,1){$b$}}
\put(10,2){\makebox(2,1){$c+2$}}
\put(10,3){\makebox(2,1){$c+1$}}
\put(12,2){\makebox(2,1){$q$}}
\put(12,3){\makebox(2,1){$c+3$}}
\put(14,3){\makebox(1,1){$p$}}
\put(14,2){\makebox(3,1){$B$}}
\put(15,3){\makebox(4,1){$A$}}
\end{picture},
\end{center}

\end{enumerate}

\item $x<c$.

\setlength{\unitlength}{12pt}
\begin{center}
\begin{picture}(15,3)
\put(0,1){\makebox(8,1){$T\stackrel{\mathsf{Sp}}{\leftarrow}ac=T\stackrel{\mathsf{Sp}}{\leftarrow}ca=$}}
\put(8,2){\line(0,1){1}}
\put(9,1){\line(0,1){2}}
\put(10,2){\line(0,1){1}}
\put(13,2){\line(0,1){1}}
\put(9,1){\line(1,0){1}}
\put(8,2){\line(1,0){5}}
\put(8,3){\line(1,0){5}}
\put(13,1){\makebox(2,1){$\leftarrow \Tilde{c}$}}
\put(10,0){\makebox(1,1){$\ddots$}}
\put(11,0){\makebox(1,2){$T^{\prime}$}}
\put(8,2){\makebox(1,1){$a$}}
\put(9,2){\makebox(1,1){$x$}}
\put(10,2){\makebox(3,1){$B$}}
\end{picture}.
\end{center}

Here, if $c^{\prime}$ with $c<c^{\prime}$ exists in $A$, then $\Tilde{c}=c^{\prime}$ and 
\setlength{\unitlength}{12pt}
\begin{picture}(2,1)
\put(0,0){\line(0,1){1}}
\put(2,0){\line(0,1){1}}
\put(0,0){\line(1,0){2}}
\put(0,1){\line(1,0){2}}
\put(0,0){\makebox(2,1){$B$}}
\end{picture} is obtained from 
\setlength{\unitlength}{12pt}
\begin{picture}(2,1)
\put(0,0){\line(0,1){1}}
\put(2,0){\line(0,1){1}}
\put(0,0){\line(1,0){2}}
\put(0,1){\line(1,0){2}}
\put(0,0){\makebox(2,1){$A$}}
\end{picture} by replacing $c^{\prime}$ by $c$.
Otherwise, $\Tilde{c}=c+1$ and 
\begin{picture}(2,1)
\put(0,0){\line(0,1){1}}
\put(2,0){\line(0,1){1}}
\put(0,0){\line(1,0){2}}
\put(0,1){\line(1,0){2}}
\put(0,0){\makebox(2,1){$A$}}
\end{picture} is unchanged.
Now, it is clear that $T\stackrel{\mathsf{Sp}}{\leftarrow}acb=T\stackrel{\mathsf{Sp}}{\leftarrow}cab$.

\end{enumerate}

\item $T_{(1,1)}\leq a$.

Let $w$ (resp. $w^{\prime}$) be the sequence of three letters bumped out from the first row of $T$, which is inserted to the second row of $T$, in the insertion 
$T\stackrel{\mathsf{Sp}}{\leftarrow}acb$ (resp. $T\stackrel{\mathsf{Sp}}{\leftarrow}cab$).
Then, we have that $w\stackrel{\mathsf{CK}}{\sim}w^{\prime}$ by a case by case analysis.
Contrary to \textbf{Case 5} of the proof of Lemma~\ref{lem:bac}, we have all three types of Coxeter-Knuth related words $w$ and $w^{\prime}$. 
The same argument as in \textbf{Case 5} of the proof of Lemma~\ref{lem:bac} shows $T\stackrel{\mathsf{Sp}}{\leftarrow}acb=T\stackrel{\mathsf{Sp}}{\leftarrow}cab$.

\end{enumerate}

\end{proof}

\begin{lem} \label{lem:braid}
Let $T$ be an increasing shifted tableau of shape $\lambda=(\lambda_{1},\lambda_{2},\ldots)$ such that $\mathfrak{row}(T)a(a+1)a$ is an FPF-involution word.
Then, $T\stackrel{\mathsf{Sp}}{\leftarrow}a(a+1)a=T\stackrel{\mathsf{Sp}}{\leftarrow}(a+1)a(a+1)$.
\end{lem}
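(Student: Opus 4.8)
The plan is to prove Lemma~\ref{lem:braid} by exactly the method used for Lemmas~\ref{lem:bac} and \ref{lem:acb}: decompose $T$ into its first row $T_{1}$ and the portion $T'$ lying strictly below it, and induct on the number of rows of $T$. The base case is the single-box computation recorded above, which shows $T\stackrel{\mathsf{Sp}}{\leftarrow}a(a+1)a=T\stackrel{\mathsf{Sp}}{\leftarrow}(a+1)a(a+1)$ for the only two admissible single-box configurations $p=a-1$ and $p=a+2$. First I would run a case analysis on the largest first-row entry $T_{(1,\lambda_{1})}$ relative to $a$ and $a+1$. Several positions are ruled out immediately: for instance $T_{(1,\lambda_{1})}=a$ forces $\mathfrak{row}(T)a(a+1)a$ to contain the adjacent pair $aa$ and hence not to be reduced, contradicting the hypothesis. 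Throughout, the admissible parity of each diagonal entry is pinned down by Lemma~\ref{lem:even}, and the precise shape of $T_{1}$ after the first insertion is controlled by Lemma~\ref{lem:T1}.

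In the \emph{confined} cases — those in which inserting $a(a+1)a$ (respectively $(a+1)a(a+1)$) alters only $T_{1}$ together with at most a self-contained excursion into the second row caused by the diagonal and first-entry parity rules — I would verify $T\stackrel{\mathsf{Sp}}{\leftarrow}a(a+1)a=T\stackrel{\mathsf{Sp}}{\leftarrow}(a+1)a(a+1)$ by direct computation, exactly as the tabulated sub-cases in Lemmas~\ref{lem:bac} and \ref{lem:acb} do. Here the repeated letter is the feature to watch: when the inserted $a$ meets an equal entry it is absorbed by rule~(1) and bumps $a+1$ downward rather than displacing anything, so the two words reach the same final filling along genuinely different intermediate routes, and one must check that those routes terminate on the identical tableau.

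For the remaining \emph{propagating} case, where $T_{(1,\lambda_{1})}$ is large enough that letters are expelled from $T_{1}$ into $T'$, I would argue as in Case~5 of Lemma~\ref{lem:bac}. Writing $w$ and $w'$ for the sequences bumped out of the first row by $T\stackrel{\mathsf{Sp}}{\leftarrow}a(a+1)a$ and $T\stackrel{\mathsf{Sp}}{\leftarrow}(a+1)a(a+1)$ respectively, the task is to show $w\stackrel{\mathsf{CK}}{\sim}w'$; a configuration-by-configuration check should reveal that $w$ and $w'$ are again either a braid triple or one of the $bac$/$acb$ triples. Invoking the induction hypothesis (for a braid triple on the smaller tableau $T'$) or Lemma~\ref{lem:bac}/\ref{lem:acb} (for a distinct-letter triple), together with the ``initial insertion to the top'' propagation argument developed inside Case~5 of Lemma~\ref{lem:bac}, then forces $T'\stackrel{\mathsf{Sp}}{\leftarrow}w=T'\stackrel{\mathsf{Sp}}{\leftarrow}w'$ and hence the desired equality of the full insertions.

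The hard part will be the bookkeeping in the propagating case. Because the braid word repeats a letter, the Hecke absorption rule~(1) may fire at several stages and interacts delicately with the first-entry parity rule~(2) and with diagonal bumping (all constrained by Lemma~\ref{lem:even}); moreover the expelled sequence can itself be a braid triple with a repeated letter rather than a sharply increasing one. Consequently, identifying precisely which word $w$, $w'$ is bumped into $T'$ — and confirming that it is Coxeter-Knuth equivalent so that the inductive step applies — will require splitting into a fair number of sub-configurations according to the parity of $a$ and the values of $T_{(1,1)}$, $T_{(2,2)}$, and the leftmost entries of the rows of $T'$.
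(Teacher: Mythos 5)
Your plan is essentially the paper's own proof: a case analysis on $T_{(1,\lambda_1)}$ and $T_{(1,1)}$ relative to $a$ (using Lemmas~\ref{lem:even} and \ref{lem:T1}), direct computation in the confined cases, and in the propagating case the identification of the bumped triples as either $(a+1)(a+2)(a+1)$ versus $(a+2)(a+1)(a+2)$ or as a $bac$/$bca$ pair $a'a''(a+1)$ versus $a'(a+1)a''$, followed by the ``initial insertion to the top'' propagation argument from Case~5 of Lemma~\ref{lem:bac} and induction on the number of rows with the single-box case as base. The outline is correct; what remains is only the explicit sub-case bookkeeping that the paper carries out.
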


\begin{proof}
Let $T_{1}$ be the first row of $T$ and $T^{\prime}$ be a portion of $T$ below the first row.

\begin{flushleft}
\textbf{Case 1:} $T_{(1,\lambda_{1})}<a$.
\end{flushleft}

\setlength{\unitlength}{12pt}
\begin{center}
\begin{picture}(25,3)
\put(0,1){\makebox(15,1){$T\stackrel{\mathsf{Sp}}{\leftarrow}a(a+1)a=T\stackrel{\mathsf{Sp}}{\leftarrow}(a+1)a(a+1)=$}}
\put(15,2){\line(0,1){1}}
\put(16,1){\line(0,1){1}}
\put(19,2){\line(0,1){1}}
\put(20,2){\line(0,1){1}}
\put(22,2){\line(0,1){1}}
\put(16,1){\line(1,0){1}}
\put(15,2){\line(1,0){7}}
\put(15,3){\line(1,0){7}}
\put(22,1){\makebox(3,1){$\leftarrow a+1$}}
\put(17,0){\makebox(1,1){$\ddots$}}
\put(18,0){\makebox(1,2){$T^{\prime}$}}
\put(16,2){\makebox(2,1){$T_{1}$}}
\put(19,2){\makebox(1,1){$a$}}
\put(20,2){\makebox(2,1){$a+1$}}
\end{picture}.
\end{center}

\begin{flushleft}
\textbf{Case 2:} $T_{(1,\lambda_1)}\geq a$.
\end{flushleft}

If $T_{(1,1)}\geq a$, then $a\leq T_{(1,1)}\leq a+2$.
Otherwise, $\mathfrak{row}(T)a(a+1)a$ is not an FPF-involution word.

\begin{enumerate}
\item $T_{(1,1)}=a$.

The first row of $T$ has the configuration, 
\setlength{\unitlength}{12pt}
\begin{picture}(6,1)
\put(0,0){\line(0,1){1}}
\put(1,0){\line(0,1){1}}
\put(3,0){\line(0,1){1}}
\put(6,0){\line(0,1){1}}
\put(0,0){\line(1,0){6}}
\put(0,1){\line(1,0){6}}
\put(0,0){\makebox(1,1){$a$}}
\put(1,0){\makebox(2,1){$a+1$}}
\put(3,0){\makebox(3,1){$A$}}
\end{picture}.
Otherwise, $\mathfrak{row}(T)a$ is not an FPF-involution word.

\setlength{\unitlength}{12pt}
\begin{center}
\begin{picture}(22,3)

\put(0,1){\makebox(7,1){$T\stackrel{\mathsf{Sp}}{\leftarrow}a(a+1)a=$}}

\put(7,2){\line(0,1){1}}
\put(8,1){\line(0,1){2}}
\put(10,2){\line(0,1){1}}
\put(13,2){\line(0,1){1}}
\put(8,1){\line(1,0){2}}
\put(7,2){\line(1,0){6}}
\put(7,3){\line(1,0){6}}

\put(10,0){\makebox(1,1){$\ddots$}}
\put(11,0){\makebox(1,2){$T^{\prime}$}}
\put(7,2){\makebox(1,1){$a$}}
\put(8,2){\makebox(2,1){$a+1$}}
\put(10,2){\makebox(3,1){$A$}}

\put(13,1){\makebox(9,1){$\leftarrow (a+1)(a+2)(a+1)$}}

\end{picture}.
\end{center}

\setlength{\unitlength}{12pt}
\begin{center}
\begin{picture}(24,3)

\put(0,1){\makebox(9,1){$T\stackrel{\mathsf{Sp}}{\leftarrow}(a+1)a(a+1)=$}}

\put(9,2){\line(0,1){1}}
\put(10,1){\line(0,1){2}}
\put(12,2){\line(0,1){1}}
\put(15,2){\line(0,1){1}}
\put(10,1){\line(1,0){2}}
\put(9,2){\line(1,0){6}}
\put(9,3){\line(1,0){6}}

\put(12,0){\makebox(1,1){$\ddots$}}
\put(13,0){\makebox(1,2){$T^{\prime}$}}
\put(9,2){\makebox(1,1){$a$}}
\put(10,2){\makebox(2,1){$a+1$}}
\put(12,2){\makebox(3,1){$A$}}

\put(15,1){\makebox(9,1){$\leftarrow (a+2)(a+1)(a+2)$}}
\end{picture}.
\end{center}
The same argument as in \textbf{Case 5} of the proof of Lemma~\ref{lem:bac} shows $T\stackrel{\mathsf{Sp}}{\leftarrow}a(a+1)a=T\stackrel{\mathsf{Sp}}{\leftarrow}(a+1)a(a+1)$.

\item $T_{(1,1)}=a+1$.

By Lemma~\ref{lem:T1}, we have

\setlength{\unitlength}{12pt}
\begin{center}
\begin{picture}(9,4)

\put(0,1){\makebox(2,2){$T=$}}

\put(2,3){\line(0,1){1}}
\put(4,2){\line(0,1){2}}
\put(6,1){\line(0,1){3}}
\put(9,3){\line(0,1){1}}
\put(6,1){\line(1,0){1}}
\put(4,2){\line(1,0){4}}
\put(2,3){\line(1,0){7}}
\put(2,4){\line(1,0){7}}

\put(7,0){\makebox(1,1){$\ddots$}}
\put(4,2){\makebox(2,1){$a+3$}}
\put(6,2){\makebox(2,1){$B$}}
\put(2,3){\makebox(2,1){$a+1$}}
\put(4,3){\makebox(2,1){$a+2$}}
\put(6,3){\makebox(3,1){$A$}}
\end{picture},
\end{center}
where $T_{(1,3)}\geq a+4$.
The entry $T_{(2,2)}$ must be $a+3$, since otherwise 
\begin{align*}
\mathfrak{row}(T)(a+1)&=\mathfrak{row}(T^{\prime})T_{(1,1)}T_{(1,2)}\mathfrak{row}(A)(a+1) \\
&\stackrel{\mathsf{Sp}}{\sim}\mathfrak{row}(T^{\prime})(a+1)(a+2)(a+1)\mathfrak{row}(A) \\
&\stackrel{\mathsf{Sp}}{\sim}(a+1)(a+2)(a+1)\mathfrak{row}(T^{\prime})\mathfrak{row}(A) \\
&\stackrel{\mathsf{Sp}}{\sim}(a+2)(a+1)(a+2)\mathfrak{row}(T^{\prime})\mathfrak{row}(A)
\end{align*}
so that $\mathfrak{row}(T)(a+1)$ is not an FPF-involution word.

\setlength{\unitlength}{12pt}
\begin{center}
\begin{picture}(24,4)
\put(0,1){\makebox(15,2){$T\stackrel{\mathsf{Sp}}{\leftarrow}a(a+1)a=T\stackrel{\mathsf{Sp}}{\leftarrow}(a+1)a(a+1)=$}}
\put(15,3){\line(0,1){1}}
\put(17,2){\line(0,1){2}}
\put(19,1){\line(0,1){3}}
\put(21,2){\line(0,1){2}}
\put(24,3){\line(0,1){1}}
\put(19,1){\line(1,0){2}}
\put(17,2){\line(1,0){6}}
\put(15,3){\line(1,0){9}}
\put(15,4){\line(1,0){9}}
\put(21,0){\makebox(1,1){$\ddots$}}
\put(17,2){\makebox(2,1){$a+3$}}
\put(19,2){\makebox(2,1){$a+4$}}
\put(21,2){\makebox(2,1){$B$}}
\put(15,3){\makebox(2,1){$a+1$}}
\put(17,3){\makebox(2,1){$a+2$}}
\put(19,3){\makebox(2,1){$a+3$}}
\put(21,3){\makebox(3,1){$A^{\prime}$}}
\end{picture},
\end{center}
where 
\setlength{\unitlength}{12pt}
\begin{picture}(3,1)
\put(0,0){\line(0,1){1}}
\put(3,0){\line(0,1){1}}
\put(0,0){\line(1,0){3}}
\put(0,1){\line(1,0){3}}
\put(0,0){\makebox(3,1){$A^{\prime}$}}
\end{picture}
is obtained from 
\setlength{\unitlength}{12pt}
\begin{picture}(3,1)
\put(0,0){\line(0,1){1}}
\put(3,0){\line(0,1){1}}
\put(0,0){\line(1,0){3}}
\put(0,1){\line(1,0){3}}
\put(0,0){\makebox(3,1){$A$}}
\end{picture}
by the initial insertion to the top at the first column with the initial column inserting letter $a+4$.

\item $T_{(1,1)}=a+2$.

Let us write 

\setlength{\unitlength}{12pt}
\begin{center}
\begin{picture}(9,5)

\put(0,2){\makebox(2,1){$T=$}}

\put(2,4){\line(0,1){1}}
\put(4,3){\line(0,1){2}}
\put(5,2){\line(0,1){3}}
\put(6,1){\line(0,1){4}}
\put(9,4){\line(0,1){1}}
\put(6,1){\line(1,0){1}}
\put(5,2){\line(1,0){2}}
\put(4,3){\line(1,0){4}}
\put(2,4){\line(1,0){7}}
\put(2,5){\line(1,0){7}}

\put(7,0){\makebox(1,1){$\ddots$}}
\put(2,4){\makebox(2,1){$a+2$}}
\put(4,3){\makebox(1,1){$b^{\prime}$}}
\put(4,4){\makebox(1,1){$b$}}
\put(5,2){\makebox(1,1){$r$}}
\put(5,3){\makebox(1,1){$q$}}
\put(5,4){\makebox(1,1){$p$}}

\put(6,3){\makebox(2,1){$B$}}
\put(6,4){\makebox(3,1){$A$}}
\end{picture}.
\end{center}

\begin{enumerate}
\item $b=a+3$.

Both $T\stackrel{\mathsf{Sp}}{\leftarrow}a(a+1)a$ and $T\stackrel{\mathsf{Sp}}{\leftarrow}(a+1)a(a+1)$ take the form

\setlength{\unitlength}{12pt}
\begin{center}
\begin{picture}(10,5)

\put(0,4){\line(0,1){1}}
\put(1,3){\line(0,1){2}}
\put(3,2){\line(0,1){3}}
\put(5,1){\line(0,1){4}}
\put(7,3){\line(0,1){2}}
\put(10,4){\line(0,1){1}}
\put(5,1){\line(1,0){2}}
\put(3,2){\line(1,0){4}}
\put(1,3){\line(1,0){8}}
\put(0,4){\line(1,0){10}}
\put(0,5){\line(1,0){10}}

\put(7,0){\makebox(1,1){$\ddots$}}
\put(0,4){\makebox(1,1){$a$}}
\put(1,3){\makebox(2,1){$a+2$}}
\put(1,4){\makebox(2,1){$a+1$}}
\put(3,2){\makebox(2,1){$r$}}
\put(3,3){\makebox(2,1){$b^{\prime}$}}
\put(3,4){\makebox(2,1){$a+3$}}
\put(5,3){\makebox(2,1){$q$}}
\put(5,4){\makebox(2,1){$a+4$}}

\put(7,3){\makebox(2,1){$B$}}
\put(7,4){\makebox(3,1){$A^{\prime}$}}
\end{picture},
\end{center}
where 
\setlength{\unitlength}{12pt}
\begin{picture}(3,1)
\put(0,0){\line(0,1){1}}
\put(3,0){\line(0,1){1}}
\put(0,0){\line(1,0){3}}
\put(0,1){\line(1,0){3}}
\put(0,0){\makebox(3,1){$A^{\prime}$}}
\end{picture}
is obtained from 
\setlength{\unitlength}{12pt}
\begin{picture}(3,1)
\put(0,0){\line(0,1){1}}
\put(3,0){\line(0,1){1}}
\put(0,0){\line(1,0){3}}
\put(0,1){\line(1,0){3}}
\put(0,0){\makebox(3,1){$A$}}
\end{picture}
by the initial insertion to the top at the first column with the initial column inserting letter $a+5(=p+1)$ or $p(>a+4)$.

\item $b>a+3$.

Both $T\stackrel{\mathsf{Sp}}{\leftarrow}a(a+1)a$ and $T\stackrel{\mathsf{Sp}}{\leftarrow}(a+1)a(a+1)$ take the form

\setlength{\unitlength}{12pt}
\begin{center}
\begin{picture}(10,5)

\put(0,4){\line(0,1){1}}
\put(1,3){\line(0,1){2}}
\put(3,2){\line(0,1){3}}
\put(5,1){\line(0,1){4}}
\put(6,3){\line(0,1){2}}
\put(7,4){\line(0,1){1}}
\put(10,4){\line(0,1){1}}
\put(5,1){\line(1,0){1}}
\put(3,2){\line(1,0){3}}
\put(1,3){\line(1,0){7}}
\put(0,4){\line(1,0){10}}
\put(0,5){\line(1,0){10}}

\put(6,0){\makebox(1,1){$\ddots$}}
\put(0,4){\makebox(1,1){$a$}}
\put(1,3){\makebox(2,1){$a+2$}}
\put(1,4){\makebox(2,1){$a+1$}}
\put(3,2){\makebox(2,1){$r$}}
\put(3,3){\makebox(2,1){$b^{\prime}$}}
\put(3,4){\makebox(2,1){$a+3$}}
\put(5,3){\makebox(1,1){$q$}}
\put(5,4){\makebox(1,1){$b$}}
\put(6,4){\makebox(1,1){$p$}}

\put(6,3){\makebox(2,1){$B$}}
\put(7,4){\makebox(3,1){$A$}}
\end{picture}.
\end{center}

\end{enumerate}

\item $T_{(1,1)}<a$.

Let $a^{\prime}=T_{(1,i)}$ be the smallest entry in $T_{1}$ such that $a\leq a^{\prime}$ ($i\neq 1$).

\begin{enumerate}

\item $a^{\prime}=a$.

The entry $T_{(1,i+1)}$ must be $a+1$, since otherwise $\mathfrak{row}(T)a$ is not an FPF-involution word.

\setlength{\unitlength}{12pt}
\begin{center}
\begin{picture}(21,3)

\put(0,1){\makebox(7,1){$T\stackrel{\mathsf{Sp}}{\leftarrow}a(a+1)a=$}}

\put(7,2){\line(0,1){1}}
\put(8,1){\line(0,1){1}}
\put(12,2){\line(0,1){1}}
\put(8,1){\line(1,0){1}}
\put(7,2){\line(1,0){5}}
\put(7,3){\line(1,0){5}}

\put(9,0){\makebox(1,1){$\ddots$}}
\put(10,0){\makebox(1,2){$T^{\prime}$}}
\put(9,2){\makebox(1,1){$T_{1}$}}

\put(12,1){\makebox(9,1){$\leftarrow (a+1)(a+2)(a+1)$}}

\end{picture},
\end{center}
and

\setlength{\unitlength}{12pt}
\begin{center}
\begin{picture}(23,3)

\put(0,1){\makebox(9,1){$T\stackrel{\mathsf{Sp}}{\leftarrow}(a+1)a(a+1)=$}}

\put(9,2){\line(0,1){1}}
\put(10,1){\line(0,1){1}}
\put(14,2){\line(0,1){1}}
\put(10,1){\line(1,0){1}}
\put(9,2){\line(1,0){5}}
\put(9,3){\line(1,0){5}}

\put(11,0){\makebox(1,1){$\ddots$}}
\put(12,0){\makebox(1,2){$T^{\prime}$}}
\put(11,2){\makebox(1,1){$T_{1}$}}

\put(14,1){\makebox(9,1){$\leftarrow (a+2)(a+1)(a+2)$}}

\end{picture}.
\end{center}

\item $a^{\prime}=a+1$.

The entry $T_{(1,i-1)}$ is smaller than or equal to $a-1$.
The entry $T_{(1,i+1)}$ must be $a+2$, since otherwise 
$\mathfrak{row}(T)(a+1)$ is not an FPF-involution word.

\setlength{\unitlength}{12pt}
\begin{center}
\begin{picture}(24,4)

\put(0,1){\makebox(7,1){$T\stackrel{\mathsf{Sp}}{\leftarrow}a(a+1)a=$}}

\put(7,2){\line(0,1){1}}
\put(8,1){\line(0,1){1}}
\put(10,2){\line(0,1){1}}
\put(11,2){\line(0,1){1}}
\put(13,2){\line(0,1){1}}
\put(15,2){\line(0,1){1}}
\put(8,1){\line(1,0){1}}
\put(7,2){\line(1,0){8}}
\put(7,3){\line(1,0){8}}

\put(9,0){\makebox(1,1){$\ddots$}}
\put(10,0){\makebox(1,2){$T^{\prime}$}}
\put(10,2){\makebox(1,1){$a$}}
\put(11,2){\makebox(2,1){$a+1$}}

\put(10,3){\makebox(1,1){$\small{i}$}}
\put(15,1){\makebox(9,1){$\leftarrow (a+1)(a+2)(a+1)$}}

\end{picture},
\end{center}
and

\setlength{\unitlength}{12pt}
\begin{center}
\begin{picture}(26,4)
\put(0,1){\makebox(9,1){$T\stackrel{\mathsf{Sp}}{\leftarrow}(a+1)a(a+1)=$}}
\put(9,2){\line(0,1){1}}
\put(10,1){\line(0,1){1}}
\put(12,2){\line(0,1){1}}
\put(13,2){\line(0,1){1}}
\put(15,2){\line(0,1){1}}
\put(17,2){\line(0,1){1}}
\put(10,1){\line(1,0){1}}
\put(9,2){\line(1,0){8}}
\put(9,3){\line(1,0){8}}
\put(11,0){\makebox(1,1){$\ddots$}}
\put(12,0){\makebox(1,2){$T^{\prime}$}}
\put(12,2){\makebox(1,1){$a$}}
\put(13,2){\makebox(2,1){$a+1$}}
\put(12,3){\makebox(1,1){$\small{i}$}}
\put(17,1){\makebox(9,1){$\leftarrow (a+2)(a+1)(a+2)$}}
\end{picture}.
\end{center}

\item $a^{\prime}\geq a+2$.

The entry $T_{(1,i-1)}$ is smaller than or equal to $a-1$.

\setlength{\unitlength}{12pt}
\begin{center}
\begin{picture}(20,4)
\put(0,1){\makebox(7,1){$T\stackrel{\mathsf{Sp}}{\leftarrow}a(a+1)a=$}}

\put(7,2){\line(0,1){1}}
\put(8,1){\line(0,1){1}}
\put(10,2){\line(0,1){1}}
\put(11,2){\line(0,1){1}}
\put(13,2){\line(0,1){1}}
\put(15,2){\line(0,1){1}}
\put(8,1){\line(1,0){1}}
\put(7,2){\line(1,0){8}}
\put(7,3){\line(1,0){8}}

\put(9,0){\makebox(1,1){$\ddots$}}
\put(10,0){\makebox(1,2){$T^{\prime}$}}
\put(10,2){\makebox(1,1){$a$}}
\put(11,2){\makebox(2,1){$a+1$}}

\put(10,3){\makebox(1,1){$\small{i}$}}
\put(15,1){\makebox(5,1){$\leftarrow a^{\prime}a^{\prime\prime}(a+1)$}}
\end{picture},
\end{center}
and

\setlength{\unitlength}{12pt}
\begin{center}
\begin{picture}(22,4)
\put(0,1){\makebox(9,1){$T\stackrel{\mathsf{Sp}}{\leftarrow}(a+1)a(a+1)=$}}

\put(9,2){\line(0,1){1}}
\put(10,1){\line(0,1){1}}
\put(12,2){\line(0,1){1}}
\put(13,2){\line(0,1){1}}
\put(15,2){\line(0,1){1}}
\put(17,2){\line(0,1){1}}
\put(10,1){\line(1,0){1}}
\put(9,2){\line(1,0){8}}
\put(9,3){\line(1,0){8}}

\put(11,0){\makebox(1,1){$\ddots$}}
\put(12,0){\makebox(1,2){$T^{\prime}$}}
\put(12,2){\makebox(1,1){$a$}}
\put(13,2){\makebox(2,1){$a+1$}}

\put(12,3){\makebox(1,1){$\small{i}$}}
\put(17,1){\makebox(5,1){$\leftarrow a^{\prime}(a+1)a^{\prime\prime}$}}
\end{picture},
\end{center}
where $a^{\prime\prime}=T_{(1,i+1)}$.

Since $(a+1)(a+2)(a+1)\stackrel{\mathsf{CK}}{\sim}(a+2)(a+1)(a+2)$ in cases (a) and (b) and $a^{\prime}a^{\prime\prime}(a+1)\stackrel{\mathsf{CK}}{\sim}a^{\prime}(a+1)a^{\prime\prime}$ $(a+1<a^{\prime}<a^{\prime\prime})$ in case (c), the same argument as in \textbf{Case 5} of the proof of Lemma~\ref{lem:bac} shows $T\stackrel{\mathsf{Sp}}{\leftarrow}a(a+1)a=T\stackrel{\mathsf{Sp}}{\leftarrow}(a+1)a(a+1)$.

\end{enumerate}

\end{enumerate}

\end{proof}

\section{Queer supercrystal structure for increasing factorizations of involution words}

Involutions words are orthogonal Hecke words~\cite{Mar} with minimum length associated with an element $z \in \mathfrak{I}_{\infty}=\left\{ z\in \mathfrak{S}_{\infty} \relmiddle| z^{2}=1 \right\}$.
Here, we just mention the defining properties of involution words.
For more details we refer the reader to \cite{Mar}.
Two involution words, $w$ and $w^{\prime}$ are called \emph{equivalent}, denoted by $w\stackrel{\mathsf{O}}{\sim} w^{\prime}$, if $w^{\prime}$ can be obtained from $w$ by a finite sequence of relations on consecutive letters $ab\sim ba$ ($\left\vert a-b\right\vert >1$) and $a(a+1)a\sim (a+1)a(a+1)$ and the relation with $i_{1}i_{2}\cdots i_{n}\sim i_{2}i_{1}\cdots i_{n}$.

\begin{thm}[\cite{HMP17}]
An orthogonal Hecke word is an involution word if and only if its equivalence class contains no words with equal adjacent letters.
\end{thm}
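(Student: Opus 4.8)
The plan is to run the orthogonal argument in exact parallel with the symplectic Theorem of \cite{Mar} quoted above, the only structural change being that the base point of the twisted conjugation is the identity $1$ rather than $\Theta$; this is precisely what removes the parity restriction from the initial relation $i_1 i_2 \cdots i_n \stackrel{\mathsf{O}}{\sim} i_2 i_1 \cdots i_n$. First I would set up the \emph{involution Demazure action}: for an involution $y \in \mathfrak{I}_\infty$ and a simple reflection $s_a$, let $a \ltimes y$ be the involution obtained by the twisted conjugation action, normalized so that $\hat{\ell}(a \ltimes y) \geq \hat{\ell}(y)$, with equality if and only if $a \ltimes y = y$, where $\hat{\ell}$ denotes the involution length (the minimal length of an involution word). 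Then $w = i_1 i_2 \cdots i_l$ is an orthogonal Hecke word for $z$ exactly when $z = s_{i_l} \ltimes \cdots \ltimes s_{i_1} \ltimes 1$, and $w$ is an involution word precisely when $l = \hat{\ell}(z)$, that is, when the action raises $\hat{\ell}$ strictly at every step. The first routine step is to verify that each of the three moves defining $\stackrel{\mathsf{O}}{\sim}$ fixes both $z$ and $|w|$: commutation and braid moves do so by the ordinary relations among the $s_a$, while the initial move does so because $s_a \cdot 1 \cdot s_a = 1$ for every $a$ makes the first two letters symmetric, giving $a \ltimes (b \ltimes 1) = b \ltimes (a \ltimes 1)$. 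Hence an entire $\stackrel{\mathsf{O}}{\sim}$-class consists of orthogonal Hecke words for a single $z$, all of one length.

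For the forward implication I would use idempotency of the action. If $w$ is an involution word, then $|w| = \hat{\ell}(z)$ is minimal; suppose toward a contradiction that some $w' \stackrel{\mathsf{O}}{\sim} w$ had equal letters in positions $k$ and $k+1$, both equal to $a$. Writing $z'_j$ for the partial products of $w'$, the relation $a \ltimes (a \ltimes y) = a \ltimes y$ forces $z'_{k+1} = z'_k$, so deleting the letter in position $k+1$ produces an orthogonal Hecke word for the same $z$ of length $|w| - 1 < \hat{\ell}(z)$, contradicting minimality. Thus no word equivalent to an involution word carries a repeated adjacent letter.

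The substance is the reverse implication, which I would prove in contrapositive form: if $w$ is an orthogonal Hecke word that is \emph{not} reduced, some $w' \stackrel{\mathsf{O}}{\sim} w$ has two equal adjacent letters. If $w$ already contains such a pair we are done, so assume not. Since $|w| > \hat{\ell}(z)$ the action fails to increase strictly at some step; let $k$ be the least such index. Then the prefix $p = i_1 \cdots i_{k-1}$ is a genuine reduced involution word for $y := z_{k-1}$, and $a := i_k$ is a descent of $y$ (the action stays at $a$). By the descent characterization of $\hat{\ell}$, the involution $y$ admits a reduced involution word $p'$ ending in $a$, and by the Matsumoto--Tits word property for reduced involution words — any two are linked by commutation, braid, and initial moves — one has $p \stackrel{\mathsf{O}}{\sim} p'$. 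These moves touch only the prefix, so they apply verbatim to the full word, giving $w \stackrel{\mathsf{O}}{\sim} p' \, a \, i_{k+1} \cdots i_l$, whose letters in positions $k-1$ and $k$ are both $a$.

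The main obstacle is exactly the input invoked in that last paragraph: the Matsumoto--Tits word property for reduced orthogonal involution words, together with the descent characterization of $\hat{\ell}$. This is the orthogonal counterpart of the combinatorics behind \cite{Mar} and \cite{HMP17}, and proving it requires the same case-by-case analysis of how the twisted conjugation $a \ltimes y$ interacts with braid and commutation moves; I expect this to go through as in the symplectic setting, with the parity bookkeeping of the $\Theta$-base point simply suppressed.
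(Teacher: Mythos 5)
The paper does not prove this statement at all: it is quoted verbatim from \cite{HMP17} (and the symplectic analogue from \cite{Mar}), so there is no internal proof to measure your argument against. Judged on its own terms, your outline reproduces the standard architecture of the argument in the literature, and the parts you actually carry out are sound: the verification that all three moves preserve the underlying involution $z$ and the word length (granting that $\ltimes$ satisfies the braid and commutation relations as an action, which is a finite rank-two check rather than an immediate consequence of the relations among the $s_a$), and the forward implication via idempotency $a\ltimes(a\ltimes y)=a\ltimes y$ plus minimality of $\hat{\ell}(z)$, are both correct.

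The genuine gap is the one you yourself flag: the reverse implication rests entirely on (i) the descent characterization (if $a\ltimes y=y$ then $y$ has a reduced involution word ending in $a$) and (ii) the Matsumoto--Tits word property for reduced involution words (any two are linked by commutation, braid, and initial moves). These are not routine lemmas to be ``expected to go through''; the word property in particular is the principal theorem of \cite{HMP17}, of at least the same depth as the statement you are trying to prove, and its proof requires a genuine case analysis of how the Demazure conjugation action interacts with the three moves (in the orthogonal case one must handle the degenerate initial relation $i_1 i_2\cdots\sim i_2 i_1\cdots$ and the commutation-versus-conjugation dichotomy in the definition of $\ltimes$). As written, your proposal is therefore not an independent proof but a correct reduction of the cited theorem to a deeper theorem of the same reference. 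If you are permitted to cite \cite{HMP17} for (i) and (ii), the argument closes; if the point is to prove the statement from scratch, the missing content is precisely those two inputs.
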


Parallel to the FPF-involution Coxeter-Knuth insertion, the involution Coxeter-Knuth insertion is also defined~\cite{Mar}.
In the involution Coxeter-Knuth insertion, the inserting word is an involution word $w$ and the procedures (1), (2), and (3) in the algorithm of the FPF-involution Coxeter-Knuth insertion are replaced by the following two.

\begin{itemize}
\item[(1)] If $a=b$, then leave $L$ unchanged and insert $a+1$ to the row below if $L$ is a row or to the next column to the right if $L$ is a column. 
\item[(2)] If $a\neq b$, then  replace $b$ by $a$ in $L$ and insert $b$ to the row blow if $L$ is a row or to the next column to the right if $L$ is a column or $b$ was on the main diagonal.
\end{itemize}
The insertion tableau is denoted by $P_{\mathsf{O}}(w)$ and the recording tableau by $Q_{\mathsf{O}}(w)$.
This algorithm is equivalent to the orthogonal Hecke insertion~\cite{HKPWZZ,Mar,PP} restricted to involution words.

The set of involution words is denoted by $\hat{\mathcal{R}}(z)$ for $z\in \mathfrak{I}_{\infty}$.
Given $w\in \hat{\mathcal{R}}(z)$ for $z\in \mathfrak{I}_{\infty}$, an increasing factorization of $w$ is a factorization $w^{1}w^{2}\cdots w^{m}$ such that $w=w^{1}w^{2}\cdots w^{m}$ with $\left\vert w \right\vert =\left\vert w^{1} \right\vert + \cdots +\left\vert w^{m} \right\vert $ and each factor $w^{i}$ is strictly increasing.
For $z\in \mathfrak{I}_{\infty}$, we denote by $\mathrm{RF}^{m}(z)$ the set of all increasing factorizations with $m$ blocks of all involution words $\hat{\mathcal{R}}(z)$.

\begin{thm} \label{thm:q-involution}
Let $z\in \mathfrak{I}_{\infty}$.
Then, the set $\mathrm{RF}^{m}(z)$ admits a $\mathfrak{q}(m)$-crystal structure.
The even Kashiwara operators are $\Tilde{e}_{i}^{F}$ and $\Tilde{f}_{i}^{F}$ ($i=1,2,\ldots,m-1$) described in Section~\ref{sec:factorization}, whereas the odd Kashiwara operators are given in Lemma~\ref{lem:f0F2} and \ref{lem:e0F2}.
\end{thm}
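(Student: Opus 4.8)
The plan is to mirror the argument for Theorem~\ref{thm:q-FPF} step by step, replacing the symplectic shifted Hecke insertion by its orthogonal counterpart. First I would invoke Marberg's orthogonal analogue of Theorem~\ref{thm:bijection2}: the semistandard involution Coxeter-Knuth insertion, say $\mathrm{H}_{\mathsf{O}}^{\prime}$, is a bijection between $\mathrm{RF}^{m}(z)$ and the set of pairs $(P,Q)$ in which $P$ is an increasing shifted tableau with $\mathfrak{row}(P)\in\hat{\mathcal{R}}(z)$ and $Q$ is a primed tableau of the same shape. Since $\mathrm{PT}_{m}(\lambda)$ already carries a $\mathfrak{q}(m)$-crystal structure by \cite{AO1,AO2,Hir}, this bijection lets me transport the whole structure onto $\mathrm{RF}^{m}(z)$ via the orthogonal version of Eq~\eqref{eq:crystaliso}, provided the transported operators are well defined and satisfy Definition~\ref{df:queer}.

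The crux, exactly as in the FPF setting, is to verify that the even (Morse--Schilling) operators $\Tilde{e}_{i}^{F},\Tilde{f}_{i}^{F}$ preserve the insertion tableau $P_{\mathsf{O}}(w)$. For this I would prove the orthogonal analogue of Theorem~\ref{thm:CK}: if $w\stackrel{\mathsf{CK}}{\sim}w^{\prime}$ are involution words then $P_{\mathsf{O}}(w)=P_{\mathsf{O}}(w^{\prime})$. Granting this, the argument of Lemma~\ref{lem:const1} carries over verbatim---two vertices in the same $\mathfrak{gl}(m)$-component are Coxeter-Knuth equivalent by \cite{BS,MS}, hence share their insertion tableau---and the even operators are well defined on $\mathrm{RF}^{m}(z)$. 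The odd operators $\Tilde{f}_{\Bar{1}}^{F},\Tilde{e}_{\Bar{1}}^{F}$ (the content of Lemmas~\ref{lem:f0F2} and \ref{lem:e0F2}) are then obtained, as in the proof of Lemma~\ref{lem:f0F1}, by pushing $\Tilde{f}_{\Bar{1}}^{P},\Tilde{e}_{\Bar{1}}^{P}$ from Lemmas~\ref{lem:eP} and \ref{lem:fP} through $\mathrm{H}_{\mathsf{O}}^{\prime}$ and reading off how the first two factors change. Here the orthogonal relation $w\stackrel{\mathsf{O}}{\sim}w^{\prime}$---in particular the free swap $i_{1}i_{2}\cdots\sim i_{2}i_{1}\cdots$ of the first two letters, which replaces the parity-laden FPF relation---makes the bookkeeping strictly simpler: there is no even/odd alternation to track, so the branching on $u_{2}=u_{1}+1$ versus $u_{2}>u_{1}+1$ that appears in Lemma~\ref{lem:f0F1} collapses, and Lemma~\ref{lem:even} has no orthogonal counterpart to impose.

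I expect the main obstacle to be the orthogonal analogue of Theorem~\ref{thm:CK}, i.e.\ reproving the three structural lemmas of Appendix~A (the $bac$, $acb$, and braid cases) for the orthogonal insertion rules (1)--(2) of the present appendix. The overall induction on the number of rows, organized by the position of the inserted letters relative to $T_{(1,\lambda_{1})}$, together with the ``insertion to the top'' analysis in Case~5 of Lemma~\ref{lem:bac}, transfers essentially unchanged; what must be redone is every subcase whose statement or proof invoked a parity congruence $\pmod 2$, since the orthogonal rules replace all such branches by the uniform rule ``if $a\neq b$, replace $b$ by $a$ and bump $b$ downward.'' This uniformity should make each case analysis shorter than its symplectic analogue, but the sheer number of configurations still has to be checked. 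Once insertion-tableau invariance is in hand, it remains only to confirm that $\Tilde{e}_{i}^{F},\Tilde{f}_{i}^{F}$ for $i=1,\dots,m-1,\Bar{1}$ satisfy the local queer axioms of \cite{AO1,AO2,GHPS}; this is inherited from $\mathrm{PT}_{m}(\lambda)$ through the bijection exactly as in the FPF case and requires no new input.
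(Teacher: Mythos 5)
Your proposal follows exactly the route the paper intends: the paper explicitly omits the proof of Theorem~\ref{thm:q-involution} on the grounds that it is ``much the same'' as that of Theorem~\ref{thm:q-FPF}, i.e.\ transport the $\mathfrak{q}(m)$-structure of $\mathrm{PT}_{m}(\lambda)$ through Marberg's orthogonal bijection, with the orthogonal analogue of Theorem~\ref{thm:CK} guaranteeing that the Morse--Schilling operators preserve $P_{\mathsf{O}}(w)$ and Lemmas~\ref{lem:f0F2} and \ref{lem:e0F2} supplying the odd operators. Your observations that the parity-based case splits disappear and that Lemma~\ref{lem:even} has no orthogonal counterpart are consistent with the simpler form of the stated odd operators, so the proposal is correct and matches the paper's (omitted) argument.
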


\begin{lem} \label{lem:f0F2}
Let $w^{1}w^{2}\cdots w^{m}\in \mathrm{RF}^{m}(z)$ be an increasing factorization of $w\in \hat{\mathcal{R}}(z)$ for $z\in \mathfrak{I}_{\infty}$.
The action of the odd Kashiwara operator $\Tilde{f}_{\Bar{1}}^{F}$ on $w^{1}w^{2}\cdots w^{m}$ is given by the following rule:

$\Tilde{f}_{\Bar{1}}^{F}$ always changes the first two factors if $\Tilde{f}_{\Bar{1}}^{F}(w^{1}w^{2}\cdots w^{m})\neq \boldsymbol{0}$. 

\begin{enumerate}

\item $\left\vert w^{1} \right\vert =0$.

$\Tilde{f}_{\Bar{1}}^{F}(w^{1}w^{2}\cdots w^{m})= \boldsymbol{0}$.

\item $\left\vert w^{1} \right\vert \geq 1$.

If $\min(\mathrm{cont}(w^{1}))=u_{1}<\min (\mathrm{cont}(w^{2}))$, then $\Tilde{f}_{\Bar{1}}^{F}(w^{1}w^{2}\cdots w^{m})=\Tilde{w}^{1}\Tilde{w}^{2}\cdots w^{m}$, where $\mathrm{cont}(\Tilde{w}^{1})=\mathrm{cont}(w^{1})\backslash \{u_{1}\}$ and $\mathrm{cont}(\Tilde{w}^{2})=\mathrm{cont}(w^{2})\cup \{u_{1}\}$.
Otherwise, $\Tilde{f}_{\Bar{1}}^{F}(w^{1}w^{2}\cdots w^{m})= \boldsymbol{0}$.
 
\end{enumerate}
\end{lem}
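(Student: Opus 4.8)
The plan is to follow the same strategy as in the proof of Lemma~\ref{lem:f0F1}, transporting the computation to the recording tableau. By the orthogonal analogue of Theorem~\ref{thm:bijection2} the semistandard involution Coxeter--Knuth insertion $\mathrm{H}_{\mathsf{O}}^{\prime}\colon w^{1}w^{2}\cdots w^{m}\mapsto (P_{\mathsf{O}}(w),Q_{\mathsf{O}}(w^{1}w^{2}\cdots w^{m}))$ is a bijection, and $\Tilde{f}_{\Bar{1}}^{F}$ is defined on the factorization side by the orthogonal analogue of Eq~\eqref{eq:crystaliso}, namely $\Tilde{f}_{\Bar{1}}^{F}=(\mathrm{H}_{\mathsf{O}}^{\prime})^{-1}\circ(\mathrm{id},\Tilde{f}_{\Bar{1}}^{P})\circ\mathrm{H}_{\mathsf{O}}^{\prime}$. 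Thus it suffices to compute $\Tilde{f}_{\Bar{1}}^{P}$ on $Q:=Q_{\mathsf{O}}(w^{1}w^{2}\cdots w^{m})$ via Lemma~\ref{lem:fP} and then read off the resulting factorization through $(\mathrm{H}_{\mathsf{O}}^{\prime})^{-1}$. When $\lvert w^{1}\rvert=0$ no box of the first row of $Q$ carries the label $1$, so Lemma~\ref{lem:fP} gives $\Tilde{f}_{\Bar{1}}^{P}Q=\boldsymbol{0}$ and case (1) is immediate.

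For case (2) write $w^{1}=u_{1}u_{2}\cdots u_{l}$ with $l=\lvert w^{1}\rvert\geq 1$ and $u_{1}=\min(\mathrm{cont}(w^{1}))$, and let $v_{1}=\min(\mathrm{cont}(w^{2}))$ be the first letter of $w^{2}$. Since $w^{1}$ is strictly increasing and inserted into the empty tableau, each of its letters is appended to the first row, so after inserting $w^{1}$ the first row of the insertion tableau is $u_{1}u_{2}\cdots u_{l}$ and the first row of $Q$ is $\underbrace{1\cdots 1}_{l}$; these labels are never altered by later insertions. The rightmost $1$ therefore sits at $(1,l)$, and by Lemma~\ref{lem:fP} the value of $\Tilde{f}_{\Bar{1}}^{P}Q$ is governed entirely by the entry at $(1,l+1)$. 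The decisive computation is to show that this entry equals $2^{\prime}$ precisely when $v_{1}<u_{1}$. Indeed, if $v_{1}<u_{1}$ then inserting $v_{1}$ bumps the diagonal entry $T_{(1,1)}=u_{1}$; by rule (2) of the involution insertion a diagonal bump switches to a column insertion, and $u_{1},u_{2},\ldots,u_{l}$ cascade rightward along the first row until $u_{l}$ is appended to the empty column $l+1$, producing a primed box $2^{\prime}$ at $(1,l+1)$ and hence $\Tilde{f}_{\Bar{1}}^{P}Q=\boldsymbol{0}$, the ``Otherwise'' alternative (the case $v_{1}=u_{1}$ does not arise, for then the two equal letters could be made adjacent by the $\stackrel{\mathsf{O}}{\sim}$-relations, contradicting \cite{HMP17}). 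If instead $u_{1}<v_{1}$, one checks that no block-$2$ insertion deposits a primed $2$ at $(1,l+1)$, so that entry is absent or unprimed; Lemma~\ref{lem:fP} then turns the $1$ at $(1,l)$ into $2^{\prime}$, or into $2$ when $l=1$, giving $\Tilde{f}_{\Bar{1}}^{P}Q\neq\boldsymbol{0}$.

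It remains to identify $(\mathrm{H}_{\mathsf{O}}^{\prime})^{-1}(P_{\mathsf{O}}(w),\Tilde{f}_{\Bar{1}}^{P}Q)$ when $u_{1}<v_{1}$. I would exhibit it as the factorization $\Tilde{w}^{1}\Tilde{w}^{2}w^{3}\cdots w^{m}$ obtained by deleting $u_{1}$ from $w^{1}$ and adjoining it as the new minimum of $w^{2}$, so that $\mathrm{cont}(\Tilde{w}^{1})=\mathrm{cont}(w^{1})\setminus\{u_{1}\}$ and $\mathrm{cont}(\Tilde{w}^{2})=\mathrm{cont}(w^{2})\cup\{u_{1}\}$. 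The key point, and what makes the orthogonal statement free of the parity case-split present in Lemma~\ref{lem:f0F1}, is the orthogonal-specific relation $i_{1}i_{2}\cdots\stackrel{\mathsf{O}}{\sim} i_{2}i_{1}\cdots$ that swaps the first two letters unconditionally: together with the commutations $ab\stackrel{\mathsf{O}}{\sim}ba$ ($\lvert a-b\rvert>1$) it lets one move $u_{1}$ to the front of $w^{2}$ even when $u_{2}=u_{1}+1$, by first swapping $u_{1}u_{2}$ to $u_{2}u_{1}$ and then commuting $u_{1}$ past $u_{3}<\cdots<u_{l}$. This yields $w\stackrel{\mathsf{O}}{\sim}\Tilde{w}:=\Tilde{w}^{1}\Tilde{w}^{2}w^{3}\cdots w^{m}$, whence $P_{\mathsf{O}}(\Tilde{w})=P_{\mathsf{O}}(w)$ by the orthogonal analogue of Theorem~\ref{thm:CK}; and inserting $u_{1}$ as the first letter of the second block reproduces exactly the diagonal cascade above, so $Q_{\mathsf{O}}(\Tilde{w}^{1}\Tilde{w}^{2}w^{3}\cdots w^{m})=\Tilde{f}_{\Bar{1}}^{P}Q$, after which reversibility of $\mathrm{H}_{\mathsf{O}}^{\prime}$ gives the claim. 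The main obstacle I anticipate lies in the intermediate range $u_{1}<v_{1}<u_{l}$: one must rule out that some block-$2$ letter, after bumping down to a lower diagonal and re-entering the first row through a column insertion, creates a spurious $2^{\prime}$ at $(1,l+1)$. Establishing this demands the same careful tracking of insertion cascades that occupies Appendix~A, and it is the one step that does not collapse to a one-line argument.
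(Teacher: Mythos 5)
Your overall strategy is exactly the one the paper intends: the paper omits the proof of Lemma~\ref{lem:f0F2}, stating that it is ``much the same as'' the proof of Lemma~\ref{lem:f0F1}, and your argument is precisely that orthogonal adaptation (transport to the recording tableau via the bijection, apply Lemma~\ref{lem:fP}, identify the preimage factorization). Two points, however, need repair. First, your justification $w\stackrel{\mathsf{O}}{\sim}\Tilde{w}\Rightarrow P_{\mathsf{O}}(\Tilde{w})=P_{\mathsf{O}}(w)$ ``by the orthogonal analogue of Theorem~\ref{thm:CK}'' is not valid: Theorem~\ref{thm:CK} concerns $\stackrel{\mathsf{CK}}{\sim}$, not $\stackrel{\mathsf{O}}{\sim}$, and the moves you use (the initial two-letter swap $u_{1}u_{2}\sim u_{2}u_{1}$ and the two-letter commutations) are not Coxeter--Knuth relations; moreover the paper exhibits, just before Theorem~\ref{thm:CK}, equivalent words with \emph{different} insertion tableaux, so equivalence alone cannot give equality of $P$. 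The correct (and easy) justification is direct: inserting $u_{2}\cdots u_{l}$ and then $u_{1}$ triggers the diagonal cascade and rebuilds the same first row $u_{1}u_{2}\cdots u_{l}$ with recording row $1^{\,l-1}2^{\prime}$, after which the two insertion processes coincide letter for letter; bijectivity of $\mathrm{H}_{\mathsf{O}}^{\prime}$ then finishes the identification.

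Second, the ``intermediate range'' $u_{1}<v_{1}<u_{l}$ that you leave open does not require the machinery of Appendix~A; it is the same one-line observation the paper makes inside the proof of Lemma~\ref{lem:f0F1}, namely that no column insertion occurs while $w^{2}$ is inserted into the single-row tableau $u_{1}\cdots u_{l}$. Indeed, a column insertion can only be triggered by displacing a main-diagonal entry: the box $(1,1)=u_{1}$ is never displaced because every letter of $w^{2}$ exceeds $u_{1}$, and the box $(2,2)$ is never displaced because the entries successively bumped from row $1$ into row $2$ are strictly increasing. Hence no $2^{\prime}$ can appear anywhere, in particular not at $(1,l+1)$, and later blocks only contribute labels $\geq 3$, which are irrelevant to the criterion of Lemma~\ref{lem:fP}. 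With these two repairs your proof is complete and agrees with the paper's intended argument.
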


\begin{lem} \label{lem:e0F2}
Let $w^{1}w^{2}\cdots w^{m}\in \mathrm{RF}^{m}(z)$ be an increasing factorization of $w\in \hat{\mathcal{R}}(z)$ for $z\in \mathfrak{I}_{\infty}$.
The action of the odd Kashiwara operator $\Tilde{e}_{\Bar{1}}^{F}$ on $w^{1}w^{2}\cdots w^{m}$ is given by the following rule:

$\Tilde{e}_{\Bar{1}}^{F}$ always changes the first two factors if $\Tilde{f}_{\Bar{1}}^{F}(w^{1}w^{2}\cdots w^{m})\neq \boldsymbol{0}$. 

\begin{enumerate}

\item $\left\vert w^{2} \right\vert =0$.

$\Tilde{e}_{\Bar{1}}^{F}(w^{1}w^{2}\cdots w^{m})= \boldsymbol{0}$.

\item $\left\vert w^{2} \right\vert \geq 1$.

If $\min(\mathrm{cont}(w^{2}))=v_{1}<\min (\mathrm{cont}(w^{1}))$, then $\Tilde{e}_{\Bar{1}}^{F}(w^{1}w^{2}\cdots w^{m})=\Tilde{w}^{1}\Tilde{w}^{2}\cdots w^{m}$, where $\mathrm{cont}(\Tilde{w}^{1})=\mathrm{cont}(w^{1})\cup \{v_{1}\}$ and $\mathrm{cont}(\Tilde{w}^{2})=\mathrm{cont}(w^{2})\backslash \{v_{1}\}$.
Otherwise, $\Tilde{e}_{\Bar{1}}^{F}(w^{1}w^{2}\cdots w^{m})= \boldsymbol{0}$.
 
\end{enumerate}
\end{lem}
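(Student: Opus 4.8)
The plan is to mirror the proof of Theorem~\ref{thm:q-FPF} line for line, replacing the symplectic shifted Hecke insertion $\mathrm{H}_{\mathsf{Sp}}$ by its orthogonal counterpart $\mathrm{H}_{\mathsf{O}}$ throughout. The three ingredients that made the FPF argument work all have orthogonal analogues, so I would assemble them in the same order. First I would invoke Marberg's bijection in the orthogonal setting: restricting orthogonal Hecke insertion to involution words and recording increasing factorizations gives a bijection $\mathrm{H}_{\mathsf{O}}^{\prime}$ between $\mathrm{RF}^{m}(z)$ and pairs $(P,Q)$, where $P$ is an increasing shifted tableau with $\mathfrak{row}(P)\in\hat{\mathcal{R}}(z)$ and $Q$ is a primed tableau of the same shape (the exact analogue of Theorem~\ref{thm:bijection2}, with $P_{\mathsf{O}}$, $Q_{\mathsf{O}}$ in place of $P_{\mathsf{Sp}}$, $Q_{\mathsf{Sp}}$). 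Since $\mathrm{PT}_{m}(\lambda)$ already carries a $\mathfrak{q}(m)$-crystal structure by \cite{AO1,AO2,Hir}, this bijection lets me transport that structure to $\mathrm{RF}^{m}(z)$ via the same formula as Eq~\eqref{eq:crystaliso}, with $\mathrm{H}_{\mathsf{O}}^{\prime}$ replacing $\mathrm{H}_{\mathsf{Sp}}^{\prime}$.

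The crux is then to verify that the transported operators are well defined and satisfy Definition~\ref{df:queer}. For the even operators I would adopt the Morse--Schilling operators $\Tilde{e}^{F}_{i},\Tilde{f}^{F}_{i}$ of Section~\ref{sec:factorization}; their preservation of $\mathrm{RF}^{m}(z)$ reduces, as in the FPF case, to the fact that they act through Coxeter--Knuth moves, which preserve the represented involution $z$. The compatibility with Eq~\eqref{eq:crystaliso} that makes the two descriptions of the even operators agree is the statement that the insertion tableau is constant along even crystal edges, i.e. the analogue of Lemma~\ref{lem:const1}; this in turn rests on the orthogonal version of Theorem~\ref{thm:CK}: if $w\stackrel{\mathsf{CK}}{\sim}w^{\prime}$ are involution words then $P_{\mathsf{O}}(w)=P_{\mathsf{O}}(w^{\prime})$. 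For the odd operators I would prove Lemmas~\ref{lem:f0F2} and \ref{lem:e0F2} by the method used for Lemma~\ref{lem:f0F1}: track how the first row of the recording tableau $Q_{\mathsf{O}}$ evolves as $w^{2}$ is inserted onto the first row $u_{1}\cdots u_{l}$ of $w^{1}$, apply the explicit descriptions of $\Tilde{f}^{P}_{\Bar{1}},\Tilde{e}^{P}_{\Bar{1}}$ from Lemmas~\ref{lem:eP} and \ref{lem:fP} to the resulting configuration, and read off the representative factorization. A welcome simplification here is that the orthogonal insertion rules drop the parity-dependent ``first-entry'' clause of the symplectic algorithm; correspondingly the odd-operator rules become uniform (simply move $\min\mathrm{cont}(w^{1})$ between the first two blocks), so the case split on $u_{2}=u_{1}+1$ versus $u_{2}>u_{1}+1$ and the odd/even obstructions that complicated Lemma~\ref{lem:f0F1} disappear.

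The main obstacle will be the orthogonal version of Theorem~\ref{thm:CK}. In the FPF case this consumed all of Appendix A through a long case analysis over the position of the inserted letters relative to the first row, and the argument relied repeatedly on parity (Lemma~\ref{lem:even}) and on the FPF-specific relation $i_{1}(i_{1}-1)\cdots\sim i_{1}(i_{1}+1)\cdots$. For involution words these parity constraints are absent and the governing relation is the plain transposition $i_{1}i_{2}\cdots\sim i_{2}i_{1}\cdots$, so I expect the three Coxeter--Knuth moves $bac\sim bca$, $acb\sim cab$, and $a(a+1)a\sim(a+1)a(a+1)$ to be handled by the same Lemmas~\ref{lem:bac}, \ref{lem:acb}, \ref{lem:braid} scheme but with fewer subcases and without the diagonal-parity lemma; in particular the inductive ``insertion to the top'' argument of \textbf{Case 5} of Lemma~\ref{lem:bac} should carry over essentially verbatim once the single-box base cases are recomputed under rules~(1)--(2) of the orthogonal algorithm. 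Finally, with even and odd operators in hand and the compatibility established, I would conclude by checking that the resulting $\mathfrak{q}(m)$-crystal graph of $\mathrm{RF}^{m}(z)$ satisfies the local queer axioms of \cite{AO1,AO2,GHPS}, exactly as for $\mathrm{RF}^{m}_{\mathsf{FPF}}(z)$, which completes the proof.
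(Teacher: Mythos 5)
Your proposal follows essentially the same route as the paper, which in fact omits the proof of Lemma~\ref{lem:e0F2} entirely on the grounds that it is ``much the same'' as the proofs of Lemmas~\ref{lem:f0F1} and \ref{lem:e0F1}: namely, transport the $\mathfrak{q}(m)$-structure of $\mathrm{PT}_{m}(\lambda)$ through the orthogonal analogue of the bijection $\mathrm{H}_{\mathsf{Sp}}^{\prime}$, and read off the odd operators by tracking the first row of the recording tableau via Lemmas~\ref{lem:eP} and \ref{lem:fP}. Your observation that the absence of the parity-dependent first-entry clause makes the orthogonal odd-operator rule uniform (no case split on $u_{1}=v_{1}+1$) correctly accounts for the simpler form of the statement.
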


\begin{lem} \label{lem:const2}
Let $u^{1}u^{2}\cdots u^{m}$ and $v^{1}v^{2}\cdots v^{m}$ be two vertices in the same connected component of the $\mathfrak{q}(m)$-crystal $\mathrm{RF}^{m}(z)$ for $z\in \mathfrak{I}_{\infty}$.
Then, two words $u=u^{1}u^{2}\cdots u^{m}$ and $v=v^{1}v^{2}\cdots v^{m}$ have the same insertion tableau; $P_{\mathsf{O}}(u)=P_{\mathsf{O}}(v)$.
\end{lem}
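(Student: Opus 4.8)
The plan is to mirror the proof of Lemma~\ref{lem:const1} verbatim, replacing the symplectic insertion $P_{\mathsf{Sp}}$ by its orthogonal counterpart $P_{\mathsf{O}}$. The argument splits into two logically independent pieces: a purely crystal-theoretic step, which reduces the hypothesis ``same connected component'' to Coxeter--Knuth equivalence of the underlying words, and an insertion-theoretic step, which shows that $P_{\mathsf{O}}$ is invariant under Coxeter--Knuth relations. Only the second piece carries any real content.

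For the first step I would invoke the definition of a connected component: $u^{1}u^{2}\cdots u^{m}$ and $v^{1}v^{2}\cdots v^{m}$ are joined by a finite sequence of even Kashiwara operators $\Tilde{e}^{F}_{i},\Tilde{f}^{F}_{i}$ $(i=1,\ldots,m-1)$, which by Theorem~\ref{thm:q-involution} are precisely the Morse--Schilling operators described in Section~\ref{sec:factorization}. Each application of such an operator modifies the underlying word only within its Coxeter--Knuth class~\cite{BS,MS}, exactly as was used in the lemma preceding Theorem~\ref{thm:q-FPF}; composing these moves along a path from $u$ to $v$ yields $u\stackrel{\mathsf{CK}}{\sim}v$. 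This step is formally identical to the fixed-point-free case and introduces nothing new.

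The substantive step is the orthogonal analogue of Theorem~\ref{thm:CK}, namely: \emph{if $w$ and $w'$ are involution words with $w\stackrel{\mathsf{CK}}{\sim}w'$, then $P_{\mathsf{O}}(w)=P_{\mathsf{O}}(w')$.} Granting this, applying it to $u\stackrel{\mathsf{CK}}{\sim}v$ gives $P_{\mathsf{O}}(u)=P_{\mathsf{O}}(v)$ and Lemma~\ref{lem:const2} follows. To prove the analogue I would reprise the whole structure of Appendix~A: reduce the three generating Coxeter--Knuth relations to orthogonal versions of Lemmas~\ref{lem:bac}, \ref{lem:acb}, and \ref{lem:braid}, asserting that for any increasing shifted tableau $T$ whose row word concatenated with the relevant triple is an involution word, the orthogonal insertions of $bac$ and $bca$ (respectively of $acb$ and $cab$, and of $a(a+1)a$ and $(a+1)a(a+1)$) into $T$ produce the same tableau. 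The base case would be the single-box analogue of Lemma~\ref{lem:single_bac}, and the passage to general $T$ would be handled by the same induction on rows via the ``initial column inserting letter'' / ``insertion to the top'' machinery developed in \textbf{Case 5} of the proof of Lemma~\ref{lem:bac}, which is insensitive to the precise bumping rule and should transfer unchanged.

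I expect the reworking of Appendix~A to be the main obstacle in principle, but in practice it should be \emph{strictly simpler} than the symplectic case, because the orthogonal rules (1) and (2) of Appendix~B eliminate all the parity ($\bmod\ 2$) bookkeeping and the distinguished ``first entry of a row'' clause. Consequently the structural lemmas specific to $P_{\mathsf{Sp}}$ either trivialize or disappear: Lemma~\ref{lem:even} has no orthogonal counterpart (diagonal entries carry no parity constraint), and the dichotomy of Lemma~\ref{lem:T1} collapses into a single generic replacement case. The real work is therefore routine but voluminous verification that each sub-case of the three relation lemmas, stripped of its parity branches, still closes; the only place demanding genuine care is re-checking that the Case~5 induction invariants (the matching of initial column insertions between the two insertion orders) remain valid when the symplectic bumping is replaced by the orthogonal one. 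Since no proof is given in Appendix~B, the intended presentation is simply to state Lemma~\ref{lem:const2} as the orthogonal shadow of Lemma~\ref{lem:const1}, with the orthogonal Theorem~\ref{thm:CK} supplied by this same, lighter, case analysis.
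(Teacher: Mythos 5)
Your proposal matches the paper's own route: the paper proves Lemma~\ref{lem:const1} by observing that vertices in the same connected component are Coxeter--Knuth equivalent (via the Morse--Schilling operators) and then invoking Theorem~\ref{thm:CK}, and it explicitly declares the proofs in Appendix~B, including that of Lemma~\ref{lem:const2}, to be ``much the same'' and omits them. Your identification of the orthogonal analogue of Theorem~\ref{thm:CK} as the only substantive missing ingredient, together with the observation that the Appendix~A case analysis simplifies once the parity clauses of the symplectic bumping rule are removed, is precisely the content the paper leaves unstated.
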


Since proofs of Theorem~\ref{thm:q-involution}, and Lemmas~\ref{lem:f0F2}, \ref{lem:e0F2}, and \ref{lem:const2} are much the same as those of Theorems~\ref{thm:q-FPF}, and Lemmas~\ref{lem:f0F1}, \ref{lem:e0F1}, and \ref{lem:const1}, respectively, we omit them.

\subsection*{Acknowledgements}
The author would like to express his gratitude to Professor Masato Okado for helpful discussions.
This work was supported by Osaka City University Advanced Mathematical Institute (MEXT Joint Usage/Research Center on Mathematics and Theoretical Physics).

\end{document}